\documentclass[13pt]{article}
\usepackage[a4paper, margin = 3cm]{geometry}
\usepackage{amsmath, amssymb, bbm, tikz-cd, url}
\usepackage{amsthm}
\usepackage{thmtools}
\usepackage[hidelinks]{hyperref}
\usepackage{cleveref}
\usepackage[affil-it]{authblk}

\DeclareSymbolFont{bbold}{U}{bbold}{m}{n}
\DeclareSymbolFontAlphabet{\mathbbold}{bbold}

\usepackage{amsmath, amsthm, stmaryrd, amsfonts, cleveref, tikz-cd, enumitem}

\newcommand{\ind}{\leavevmode{\parindent=15pt\indent}}

\theoremstyle{plain} 
\newtheorem{theorem}{Theorem}[section]
\newtheorem*{theorem*}{Theorem}
\newtheorem{corollary}[theorem]{Corollary}
\newtheorem*{corollary*}{Corollary}

\newtheorem*{proposition*}{Proposition}
\newtheorem{lemma}[theorem]{Lemma}
\newtheorem*{lemma*}{Lemma}

\newtheorem*{fact*}{Fact}

\newtheorem*{conjecture*}{Conjecture}

\newtheorem*{criterion*}{Criterion}

\newtheorem*{assertion*}{Assertion}

\newtheorem*{lem_def*}{Lemma-Definition}

\newtheorem*{prop_def*}{Proposition_Definition}

\newtheorem*{thm_def*}{Theorem-Definition}

\theoremstyle{definition} 
\newtheorem{example}[theorem]{Example}
\newtheorem*{example*}{Example}

\newtheorem*{examples*}{Examples}
\newtheorem{definition}[theorem]{Definition}
\newtheorem*{definition*}{Definition}

\newtheorem*{condition*}{Condition}

\newtheorem*{problem*}{Problem}

\newtheorem*{exercise*}{Exercise}

\newtheorem*{algorithm*}{Algorithm}

\newtheorem*{subroutine*}{Subroutine}

\newtheorem*{question*}{Question}

\newtheorem*{axiom*}{Axiom}

\newtheorem*{property*}{Property}

\newtheorem*{assumption*}{Assumption}

\newtheorem*{hypothesis*}{Hypothesis}

\theoremstyle{remark} 
\newtheorem{remark}[theorem]{Remark}

\newtheorem*{remark*}{Remark}

\newtheorem*{note*}{Note}

\newtheorem*{scholium*}{Scholium}

\newtheorem*{notation*}{Notation}

\newtheorem*{claim*}{Claim}

\newtheorem*{summary*}{Summary}

\newtheorem*{acknowledgment*}{acknowledgment}

\newtheorem*{acknowledgement*}{acknowledgement}

\newtheorem*{case*}{Case}

\newtheorem*{conclusion*}{Conclusion}
\makeatletter
\def\thmheadbrackets#1#2#3{%
  \thmname{#1}\thmnumber{\@ifnotempty{#1}{ }\@upn{#2}}%
  \thmnote{ {\the\thm@notefont[#3]}}}
\makeatother

\newtheoremstyle{brackets}
  {}
  {}
  {\itshape}
  {}
  {\bfseries}
  {.}
  { }
  {\thmheadbrackets{#1}{#2}{#3}}

\theoremstyle{brackets}

\newtheorem*{theorembrackets*}{Theorem}

\makeatletter
\def\thmheadnoparens#1#2#3{%
  \thmname{#1}\thmnumber{\@ifnotempty{#1}{ }\@upn{#2}}%
  \thmnote{ {\the\thm@notefont#3}}}
\makeatother

\newtheoremstyle{noparens}
  {}
  {}
  {\itshape}
  {}
  {\bfseries}
  {.}
  { }
  {\thmheadnoparens{#1}{#2}{#3}}

\theoremstyle{noparens}

\newtheorem*{theoremnoparens*}{Theorem}

\usepackage{xparse}
\DeclareDocumentCommand{\newmathcommand}{mO{0}m}{%
  \expandafter\let\csname old\string#1\endcsname=#1
  \expandafter\newcommand\csname new\string#1\endcsname[#2]{#3}
  \DeclareRobustCommand#1{%
    \ifmmode
      \expandafter\let\expandafter\next\csname new\string#1\endcsname
    \else
      \expandafter\let\expandafter\next\csname old\string#1\endcsname
    \fi
    \next
  }%
}

\newcommand{\C}{\mathbb{C}}

\newmathcommand{\H}{{\mathbb{H}}}

\newcommand{\K}{\mathbb{K}}
\newmathcommand{\L}{\mathbb{L}}

\newmathcommand{\O}{\mathbb{O}}
\newmathcommand{\P}{\mathbb{P}}

\newcommand{\R}{\mathbb{R}}
\newmathcommand{\S}{\mathbb{S}}

\newcommand{\Z}{\mathbb{Z}}

\newmathcommand{\deg}{\operatorname{deg}}

\DeclareMathOperator{\End}{End}

\newmathcommand{\Big}{{\mathrm{Big}}}

\DeclareMathOperator{\im}{im}

\DeclareMathOperator{\JB}{JB}

\newmathcommand{\dv}{\operatorname{div}}
\newmathcommand{\inw}{\operatorname{int}}

\newmathcommand{\epi}{\operatorname{epi}}
\newmathcommand{\diam}{\operatorname{diam}}
\newmathcommand{\diag}{\operatorname{diag}}
\newmathcommand{\regn}{\operatorname{regn}}
\newmathcommand{\relinw}{\operatorname{relint}}

\newmathcommand{\char}{\operatorname{char}}

\DeclareMathOperator{\Id}{Id}

\newmathcommand{\unr}{\mathrm{unr}}

\newmathcommand{\Un}{\operatorname{Un}}

\newmathcommand{\sl}{\mathfrak{s}\mathfrak{l}}

\newmathcommand{\un}{\mathfrak{u}}
\newmathcommand{\su}{\mathfrak{s}\mathfrak{u}}

\makeatletter
\def\@maketitle{%
  \newpage
  \null
  \vskip 2em%
  \begin{center}%
  \let \footnote \thanks
    {\Large\bfseries \@title \par}%
    \vskip 1.5em%
    {\normalsize
      \lineskip .5em%
      \begin{tabular}[t]{c}%
        \@author
      \end{tabular}\par}%
    \vskip 1em%
    {\normalsize \@date}%
  \end{center}%
  \par
  \vskip 1.5em}
  
\makeatother

\title{\sc \huge An order-theoretic characterization of C*-algebras}

\author{Samuel Tiersma%
\thanks{Email: \texttt{s.j.tiersma@math.leidenuniv.nl}}}
\affil{Mathematical Institute, Leiden University, 2300 RA Leiden,
The Netherlands}

\begin{document}

\maketitle
\vspace{-5mm}
\date{}
\vspace{-5mm}

\begin{abstract}
\noindent 
We give an order-theoretic characterization of the essential image of the forgetful functor from the category of real/complex unital C*-algebras to the category of real/complex unital operator systems. It is based on the characterization of JB-algebras among the order unit spaces in terms of the existence of gauge-reversing bijections obtained by M. Roelands and the author in \cite{OCJB}. To this end, we show that a unital operator system is completely order isomorphic to a C*-algebra if and only if each of its matrix spaces admits a compatible JB-algebra structure. As an application, we prove that for $n\ge 4$ the range of a unital $n$-positive projection on a unital real C*-algebra is unitally $n$-order isomorphic to a unital real C*-algebra, which is the analogue of a result proven for complex C*-algebras by Choi–Effros.
\end{abstract}

{\small {\bf Keywords:} operator systems, JB-algebras, real C*-algebras, gauge-reversing bijections, order unit spaces, positive projections on real C*-algebras, C*-algebras} 

{\small {\bf Subject Classification: 	 46L07, 17C65, 46L05,	46L70} }

\section{Introduction}\label{Intro}
C*-algebras form a natural framework for the mathematical formulation of quantum mechanics. For this reason, there has been a substantial interest in characterizations of C*-algebras in terms of the underlying order or Banach space structure \cite{Sher56}, \cite{Sak56}. 
The commutative case of this problem is classical, see e.g.\ \cite{Kad51} and the references therein. Impetus to the study of the noncommutative case was given by Connes' characterization of the ordered vector spaces underlying $\sigma$-finite von Neumann algebras \cite{Connes74}. Alfsen--Shultz gave a convex-geometric characterization of the state spaces of unital C*-algebras \cite{ASchar78}. Characterizations of C*-algebras and related objects in terms of the existence of \emph{holomorphic} (or analytic) symmetries have been given in various categories \cite{UpmHoloC, NlRs02, ElMCRP, ChuGeom}. The main result of this article is the following \emph{order-theoretic} characterization of unital C*-algebras among the operator systems (pertinent definitions are given later in this introduction).

\begin{theorem}\label{MAINTHM} Let $E$ be a real (resp.\ complex) complete unital operator system. Then $E$ is unitally completely order isomorphic to a unital real (resp.\ complex) C*-algebra if and only if for arbitrarily large integers $n\ge 1$ there exists an order-anti-isomorphism $\Phi_n\colon M_n(E)^{\circ}_+ \to M_n(E)^{\circ}_+$ which is homogeneous of degree $-1$ of the interior of the positive cone of $M_n(E)$.
\end{theorem}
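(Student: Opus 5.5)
The proof combines the JB-algebra characterization of \cite{OCJB} with a matrix-level argument. The abstract tells us the intermediate statement: $E$ is completely order isomorphic to a C*-algebra if and only if each $M_n(E)$ admits a compatible JB-algebra structure.

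\textbf{Necessity.} Suppose $E\cong A$ unitally completely order isomorphically, with $A$ a unital real or complex C*-algebra. Then each $M_n(E)\cong M_n(A)$ is a unital C*-algebra. On the interior of its positive cone, the invertible positive elements, the map $\Phi_n(a)=a^{-1}$ is an order-anti-isomorphism homogeneous of degree $-1$. This uses only operator monotonicity of inversion: $0<a\le b$ implies $b^{-1}\le a^{-1}$.

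\textbf{Sufficiency.} This goes in three steps.

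\emph{Step 1.} Each $M_n(E)$, with the unit $1_n=\diag(1,\dots,1)$, is a complete order unit space, since $E$ is complete. By the main theorem of \cite{OCJB}, the existence of the homogeneous order-anti-isomorphism $\Phi_n$ of the interior of the cone yields a JB-algebra structure on $M_n(E)$. This structure has unit $1_n$, and its positive cone and order unit norm are the given ones. This holds for arbitrarily large $n$.

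\emph{Step 2.} We pass to all $n$. If the hypothesis holds for some $N$, it should hold for every $n\le N$. The idea is that $M_n(E)$ is a unital order-retract of $M_N(E)$ via the compression $x\mapsto x\oplus y_0$, with $y_0$ a fixed element of the interior of the positive cone of $M_{N-n}(E)$. I would show that the corresponding JB-structure restricts: the Peirce corner $pM_N(E)p$ for $p=1_n\oplus 0$ should be a JB-subalgebra isomorphic as an order unit space to $M_n(E)$. The difficulty is that $p$ need not a priori be a projection in the JB-algebra. To handle this, I would first show that the JB-structure on $M_N(E)$ is unique given the order. Then unital order automorphisms, such as conjugation by permutation matrices and signed diagonal unitaries, are Jordan automorphisms; this uses the fact that order automorphisms of JB-algebras fixing the unit are Jordan isomorphisms. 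The diagonal symmetry $s=1_n\oplus(-1_{N-n})$ then acts as a Jordan automorphism, and its fixed-point space is the block-diagonal part, which exhibits $p$ as a central-type projection of that subalgebra.

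\emph{Step 3.} Having compatible JB-structures on all $M_n(E)$, equivalently on $M_2(M_n(E))$, I would identify the JB-algebra $M_2(E)$ as containing the $2\times 2$ matrix units as a Jordan frame. The off-diagonal Peirce space $\{\begin{pmatrix}0&x\\x^*&0\end{pmatrix}\}$, with its Jordan product, induces a product $x\cdot y$ on $E$ via
\[
\begin{pmatrix}0&x\\x^*&0\end{pmatrix}\circ\begin{pmatrix}0&y\\y^*&0\end{pmatrix}
=\tfrac12\begin{pmatrix}xy^*+yx^*&0\\0&x^*y+y^*x\end{pmatrix},
\]
in the spirit of the coordinatization theorem for Jordan algebras with a $2\times 2$, or better $3\times3$, matrix frame. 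The $3\times3$ frame is available because we have $M_3(E)$. This coordinatization makes $E$ an associative $*$-algebra whose symmetric part is a JB-algebra. The JB-algebra $M_n(E)_{sa}$ then equals $H_n(E)$ with the product $\frac12(xy+yx)$, and the matrix norms force the C*-identity, giving a C*-algebra. Finally, the identity map is a unital complete order isomorphism because the cones on every matrix level agree by construction.

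\textbf{Main obstacle.} I expect Step 3 to be the hardest: extracting associativity of the coordinate algebra and the C*-norm from the Jordan structures, with the unit $1$ of $E$ serving as the coordinate unit. The plan is to invoke Jacobson coordinatization via the frame in $M_3(E)$ and then Alfsen–Shultz–Størmer type results to conclude that a JB-algebra of the form $H_n(A)$ with $n\ge3$ comes from a C*-algebra.
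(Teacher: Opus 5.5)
\textbf{There is a genuine gap.} Your overall architecture matches the paper's: use \cite{OCJB} to get compatible JB-structures, pass to smaller $n$, coordinatize, then build a C*-norm. But the step everything hinges on is only asserted. The JB-product that \cite{OCJB} produces on $M_n(E)_{sa}$ is abstract. Nothing a priori says that $E_{11}\circ E_{11}=E_{11}$, that $(E_{12}+E_{21})^2=E_{11}+E_{22}$, or that the Peirce spaces of the $E_{ii}$ are the matrix-entry subspaces $\{y\otimes E_{ij}+y^*\otimes E_{ji}\colon y\in E\}$. In Step 3 you simply say you would ``identify'' the matrix units as a Jordan frame.

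Without this, Jacobson's (or Hanche-Olsen's) theorem cannot be applied to the frame coming from $M_n(\mathbb{K})$. Even an abstract isomorphism $M_n(E)_{sa}\cong M_n(D)_{sa}$ would not let you identify $D$ with $E$, or show that the isomorphism has the form $\phi_n$ for a linear $\phi\colon D\to E$. That is exactly what a unital complete order isomorphism requires.

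This is the paper's key ingredient, \Cref{circdot}. By \Cref{CprBicp}, the compressions $x\mapsto p\cdot x\cdot p$ and $x\mapsto q\cdot x\cdot q$ (with $q=I_n-p$) are bicomplementary projections. By Alfsen--Shultz (\Cref{BicpThm}) they therefore equal $U_p$ and $U_q$. Polarization then gives $U_a(x)=a\cdot x\cdot a$ and $a\circ b=\tfrac12(ab+ba)$ for $a,b\in M_n(\mathbb{K})_{sa}$, and also $M_n(\mathbb{K})_{sa}'=E_{sa}\otimes I_n$.

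Your Step 2 observation could be developed into an alternative proof of this. Conjugation by a unitary of $M_n(\mathbb{K})$ is a unital order automorphism, hence a Jordan automorphism by \Cref{Kap}. Then:
\begin{enumerate}
\item $p$ is an extreme point of $[0,I_n]$ in the fixed-point subalgebra of $x\mapsto s\cdot x\cdot s$ with $s=p-q$, hence a Jordan projection;
\item face arguments show that $U_p$ is the compression on block-diagonal elements;
\item positivity in a concrete representation $E\subset B(H)$ shows that $U_p$ kills the off-diagonal part.
\end{enumerate}
None of this is in your proposal, and even your claim that $p$ is ``central-type'' is left unproved.

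Step 3 also contains outright errors.
\begin{itemize}
\item The displayed off-diagonal product only yields the symmetrized expressions $xy^*+yx^*$ and $x^*y+y^*x$. You cannot recover $xy$ from these.
\item A $3\times 3$ frame does not give an associative coordinate algebra. For $n=3$, Jacobson's theorem only yields an alternative one (think of $H_3(\mathbb{O})$). You must use $n\ge 4$, which is available here because the admissible $n$ are unbounded and downward closed.
\item The C*-norm is not automatic in the real case. The paper needs properness of the involution (from the absence of nilpotents in $M_n(D)_{sa}$), then \Cref{norm} (whose real case uses automatic continuity of Jordan derivations), and then a separate completeness argument.
\item Agreement of the cones at all matrix levels is not ``by construction''. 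You must check that the coordinatized product reproduces the given JB-structure on every $M_m(E)_{sa}$. The paper does this by compressing to corners $p\cdot M_m(E)_{sa}\cdot p$, identifying them with a fixed lower matrix level, and invoking uniqueness (\Cref{Kap}).
\end{itemize}
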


We remark that no smoothness assumptions are made on the maps $\Phi_n$, which are known as \emph{gauge-reversing bijections}. The key ingredient in the proof is a characterization of JB-algebras among the order unit spaces obtain by M. Roelands and the author in \cite{OCJB}. We shall first discuss this result for JB-algebras, and then make the connection with operator systems.


\ind The prototypical example of a JB-algebra is the self-adjoint part $A_{sa}$ of a real or complex C*-algebra $A$, endowed with the nonassociative \emph{Jordan product} $x \circ y := \frac{1}{2}(xy+yx)$. If $J$ is a JB-algebra with identity $1_J$, then $J$ is partially ordered by its cone of squares $J_+ = \{x^2\colon x \in J\}$ and $(J, J_+, 1_J)$ is an order unit space. The interior $J_+^{\circ}$ of $J_+$ consists of the positive invertible elements. We record the following observation which will be of cardinal importance. The inversion map $i\colon J_+^{\circ} \to J_+^{\circ}$, $i(x) = x^{-1}$ is an order-anti-isomorphism which is homogeneous of degree $-1$ (see e.g.\ \cite[Lemma 1.31]{AS03}).\\
\ind Now let $(V, V_+, v)$ be an order unit space and denote $V^{\circ}_+$ the interior of its positive cone. A mapping $\Phi \colon V^{\circ}_+ \to V^{\circ}_+$ is called a \emph{gauge-reversing bijection} if it is an order-anti-isomorphism and homogeneous of degree $-1$, that is, for all $x,y \in V_+^\circ$ and $\lambda > 0$ it holds that
\begin{equation}
x \le y \iff \Phi(x) \ge \Phi(y)\quad \text{ and }\quad \Phi(\lambda x)=\lambda^{-1}\Phi(x).
\end{equation}

Let us call a \emph{compatible JB-algebra structure on} $(V, V_+, v)$ to be a structure of JB-algebra on $V$ for which $v$ is the identity element and $V_+ = \{x^2: x\in V\}$ the cone of squares; it is unique if it exists \cite[Thm. 2.80]{AS03}. In view of the above observation about the inversion map in a JB-algebra, a necessary condition for the existence of a compatible JB-algebra structure on $(V, V_+, v)$ is the existence of a gauge-reversing bijection on $V_+^\circ$. Remarkably, this conditions is also sufficient.

\begin{theorem*}[\cite{OCJB}] A complete order unit space $(V, V_+, v)$ has a compatible JB-algebra structure if and only if there exists a gauge-reversing bijection $\Phi\colon V^{\circ}_+ \to V^{\circ}_+$.
\end{theorem*}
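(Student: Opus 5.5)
The necessity direction is immediate. If $(V,V_+,v)$ carries a compatible JB-algebra structure, then by the observation recorded above the inversion map $x\mapsto x^{-1}$ on $V_+^\circ$ is an order-anti-isomorphism and homogeneous of degree $-1$, that is, a gauge-reversing bijection. All the work is therefore in the sufficiency direction. There the plan is to recover a Jordan product from $\Phi$ in three stages: turn $\Phi$ into symmetries of the cone, show that the cone is a symmetric cone in a suitable infinite-dimensional sense, and then apply a Koecher--Vinberg type theorem for order unit spaces.

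\textbf{Step 1: reduce to a normalized involution.}
\begin{enumerate}
\item Equip $V_+^\circ$ with Thompson's part metric $d_T(x,y)=\log\max\{M(x/y),M(y/x)\}$, where $M(x/y)=\inf\{\lambda\colon x\le\lambda y\}$. Also use Hilbert's projective metric $d_H=\log M(x/y)M(y/x)$.
\item Since $\Phi$ is an order-anti-isomorphism and homogeneous of degree $-1$, one checks directly that $M(\Phi(x)/\Phi(y))=M(y/x)$. Hence $\Phi$ is a $d_T$-isometry.
\item Replace $\Phi$ by $\Psi=T\circ\Phi$, where $T$ is a linear order automorphism, so that $\Psi(v)=v$.
\item Show that $\Psi^2$ is an order-preserving, degree-one homogeneous bijection of the cone. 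By a Noll--Schäffer/Rothaus-type argument, using completeness, such a map extends to a linear order automorphism.
\item Adjust so that $\Psi$ becomes an involution with $v$ as an isolated fixed point (up to scaling). This gives a point symmetry $S_v$ of $(V_+^\circ,d_T)$ at $v$.
\end{enumerate}

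\textbf{Step 2: make the cone homogeneous and symmetric.} The target is a symmetry $S_x$ at every point $x$. The approach:
\begin{enumerate}
\item Use midpoints. The set of Thompson-metric midpoints between $x$ and $y$ is fixed by the symmetry exchanging them.
\item Construct linear automorphisms sending $v$ to any interior point, for instance via $S_{m}\circ S_v$ with $m$ a suitable midpoint of $v$ and $x^{\pm}$.
\item Conjugate by these to conclude that the linear automorphism group acts transitively on $V_+^\circ$, and that every point is an isolated fixed point of some gauge-reversing involution.
\end{enumerate}

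\textbf{Step 3: build the Jordan product.}
\begin{enumerate}
\item Differentiate the inversion at $v$. Define the quadratic representation by $U_x=$ the linear automorphism sending $v$ to $x^2$, where $x^2$ is characterized via the unique symmetry fixing both $v$ and $x$.
\item Set $x\circ y=\tfrac12\frac{d}{dt}\big|_{t=0}\dots$, or equivalently polarize $x\mapsto x^2$ obtained from the geodesic $t\mapsto \exp(tx)$ in the symmetric space.
\item Verify the Jordan identity and the JB norm conditions $\|x^2\|=\|x\|^2$ and $\|x^2\|\le\|x^2+y^2\|$. These follow from the order unit norm and the fact that $U_x$ is a linear order automorphism.
\end{enumerate}

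\textbf{Main obstacle.} The hardest step is the first one. No smoothness is assumed on $\Phi$, so a mere metric isometry must be shown to be linear (for $\Psi^2$) and tied to a Lie group action. This requires an infinite-dimensional analogue of the Noll--Schäffer linearity theorem for order isomorphisms of cones, and a careful treatment of the non-uniqueness of Thompson geodesics. My first attempt would use the unique Hilbert-metric-compatible geodesics $t\mapsto$ "$x^{1-t}y^t$" as defined via the symmetries. I would then deduce differentiability of $\Psi$ at $v$ from its action on the faces of the unit ball of the order unit norm.
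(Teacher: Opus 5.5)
Your necessity direction is correct, and the shape of your sufficiency argument (one gauge-reversing map $\to$ Thompson symmetries at every point $\to$ smooth symmetric cone $\to$ Jordan product) matches the ingredients the paper cites from \cite{OCJB}. But as written, the sufficiency argument is circular exactly where the work lies. Item~3 of Step~1 needs a linear order automorphism $T$ with $T(\Phi(v))=v$. That is homogeneity of the cone, which you only plan to derive in Step~2. Step~2 in turn gets transitivity from maps $S_m\circ S_v$, which presuppose a symmetry $S_m$ at a point $m\neq v$; producing such symmetries is what Step~2 is supposed to do. Nor does normalizing $\Phi$ by hand produce a symmetry. On $\R^2_{>0}$ the gauge-reversing map $(x_1,x_2)\mapsto(b_1^2/x_2,\,b_2^2/x_1)$ with $b_1\neq b_2$ has no fixed point even after rescaling. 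The gauge-reversing involution $(x_1,x_2)\mapsto(1/x_2,1/x_1)$ fixes $v=(1,1)$ but also the whole curve $x_1x_2=1$, so $v$ is not an isolated fixed point. Choosing the right linear correction requires knowing the automorphism group, which you do not yet have.

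The missing idea is an intrinsic construction, from the single given $\Phi$, of a Thompson symmetry at \emph{every} point, together with its uniqueness; uniqueness is what makes any conjugation argument legitimate. The paper imports this as \cite[Lemmas 5.7, 5.9 and 5.10]{OCJB}, and your proposal offers no substitute. Item~4 of Step~1 is likewise only asserted. Linearity of homogeneous order isomorphisms is the finite-dimensional Noll--Sch\"affer/Rothaus theorem. In an arbitrary infinite-dimensional complete order unit space it has to be proved, not invoked ``using completeness''.

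Step~3 differentiates the inversion at $v$, uses $t\mapsto\exp(tx)$ and polarizes. All of this needs the symmetries to be differentiable, whereas Steps~1--2 give only metric isometries. You flag this yourself, but ``deduce differentiability from the action on the faces of the unit ball'' is not an argument. In the source this is a separate, substantial theorem: gauge-reversing bijections are automatically real-analytic \cite[Thm.~5.20]{OCJB}.

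Also, the Jordan identity does not ``follow from the order unit norm and the fact that $U_x$ is a linear order automorphism''. Passing from a smooth Thompson-symmetric cone to a Jordan product is an infinite-dimensional Koecher--Vinberg theorem, and it must be proved or cited. Your description of $x^2$ is also off: with $S_x$ the symmetry at $x$, one should have $x^2=S_x(v)$ and $U_x=S_x\circ S_v$, not a symmetry fixing both $v$ and $x$.

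So the proposal is a plausible roadmap, but its three hard steps are assumed rather than established: symmetries at every point, automatic smoothness, and the Koecher--Vinberg step.
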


We refer to the introduction of \cite{OCJB} for a discussion of this and related characterizations of Jordan structures, and will now turn to C*-algebras.\\
\ind Let $E = (E, *, \{M_n(E)_+\}_n, e)$ be a unital operator system, i.e.\ $E$ is a matrix-ordered *-vector space with Archimedean matrix order unit $e$, defined over the base field $\K \in \{\R, \C\}$. Each unital C*-algebra $A$ over $\K$ defines a unital operator system as follows from the fact that $M_n(A) = A \otimes_\K M_n(\K)$ is again a C*-algebra. Hence $M_n(A)_{sa}$ is a JB-algebra whose inversion map defines a gauge-reversing bijection $i_n\colon M_n(A)_+^{\circ} \to M_n(A)_+^{\circ}$. This shows necessity of the condition in \Cref{MAINTHM} for $E$ to be isomorphic, in the category of unital operator systems, to a unital C*-algebra $A$. We now give an outline of the proof of sufficiency.\\
\ind First, the order-theoretic characterization of JB-algebras is invoked to obtain a compatible JB-algebra structure on $(M_n(E)_{sa}, M_n(E)_+, e \otimes I_n)$, whose product we write as $(x, y) \mapsto x \circ y$. It turns out (cf. \Cref{circdot}) that the product $\circ$ is \emph{automatically compatible} with the operator system structure, in the sense that for all $a\in M_n(\K)_{sa}$ and $x\in M_n(E)_{sa}$ it holds that
$$a \circ x = \frac{1}{2}(a \cdot x + x \cdot a),$$
where the right hand side is given by the $M_n(\K)$-bimodule structure on $M_n(E)$. This equality is deduced from an order-theoretic characterization of compressions in a unital JB-algebra in terms of bicomplementary projections due to Alfsen--Shultz.\\
\ind We are then prepared to apply coordinatization theorems for JB-algebras containing an isomorphic copy of $M_n(\R)_{sa}$ or $M_n(\C)_{sa}$, due to Hanche--Olsen resp.\ Jacobson. This yields a JB-isomorphism $M_n(E)_{sa} \cong M_n(D)_{sa}$. The `coordinate' algebra $D$ is an associative unital *-algebra with a proper involution, whose self-adjoint part $D_{sa}$ is a JB-algebra, and is shown to be a C*-algebra in the norm $\lVert x\rVert_D = \lVert x^*x\rVert_{D_{sa}}^{1/2}$. In this way we shall obtain the following result, which is an alternative formulation of our main theorem.

\begin{theorem}\label{MAINTHM2} Let $(E, *, \{M_n(E)_+\}_n, e)$ be a (real or complex) unital operator system. Let $m \in \Z$ be an integer such that $m\ge 2$ if $E$ is complex and $m\ge 4$ if $E$ is real. Then:\\
(1) $E$ is unitally $m$-order isomorphic to a C*-algebra if and only if there exists a compatible \mbox{JB-algebra} structure on the order unit space $(M_m(E), M_m(E)_+, e \otimes I_m)$.\\
(2) $E$ is unitally completely order isomorphic to a C*-algebra if and only if  there exists a compatible JB-algebra structure on the order unit space $(M_n(E), M_n(E)_+, e \otimes I_n)$ for arbitrarily large $n\ge 1$.
\end{theorem}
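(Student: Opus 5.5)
The necessity directions are immediate. If $E$ is unitally $m$-order isomorphic (resp.\ completely order isomorphic) to a unital C*-algebra $A$, then $M_m(E)$ is unitally order isomorphic to $M_m(A)$. Since $M_m(A)$ is a C*-algebra, $M_m(A)_{sa}$ with the Jordan product $x\circ y=\frac12(xy+yx)$ is a JB-algebra. Its cone of squares is $M_m(A)_+$ and its identity is $1\otimes I_m$. Transporting this product along the isomorphism gives a compatible JB-structure. So the work is in sufficiency, and (2) will follow from (1). Indeed, if compatible JB-structures exist for arbitrarily large $n$, pick such an $n\ge 4$. Part (1) then yields a C*-algebra $A$ and a unital $n$-order isomorphism $\varphi\colon E\to A$. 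To upgrade this to a complete order isomorphism, I would observe that a compatible JB-structure on $M_N(E)$ for $N\ge n$ (we may take $N$ arbitrarily large) produces, by the same construction, C*-algebras $A_N$ with $E\cong A_N$ $N$-order isomorphically. I would then identify all the $A_N$ through uniqueness of the C*-structure. A unital $2$-order isomorphism between unital C*-algebras is already a complete order isomorphism, since it is a unital Jordan $*$-isomorphism that is $2$-positive and hence a $*$-isomorphism. Hence $\varphi$ is completely positive with completely positive inverse at all levels $N$.

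For (1), fix $m$ and the JB-product $\circ$ on $M_m(E)_{sa}$. \textbf{Step 1} is compatibility with the matrix units: for $a\in M_m(\K)_{sa}$ and $x\in M_m(E)_{sa}$ we want $a\circ x=\frac12(a\cdot x+x\cdot a)$, where $a$ is identified with $e\otimes a$. I would first treat diagonal projections $p=e\otimes E_{S}$, $S\subseteq\{1,\dots,m\}$. The maps $x\mapsto pxp$ and $x\mapsto p^\perp xp^\perp$ are positive, unital on the respective corners, and mutually complementary in the sense of Alfsen--Shultz. By their characterization of compressions as bicomplementary positive projections, the map $x\mapsto pxp$ must coincide with the JB-compression $U_p$, where $U_p x=2p\circ(p\circ x)-p\circ x$. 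The same argument applies after conjugating by unitaries $u\in M_m(\K)$, which act as order automorphisms of $M_m(E)$ fixing the unit. This gives $U_q x=qxq$ for every projection $q\in M_m(\K)$. Since $p\circ x=\frac12(x+U_px-U_{1-p}x)$, we get $p\circ x=\frac12(px+xp)$. Linearity together with the spectral decomposition of self-adjoint $a$ then finishes the step. I expect this to be the main obstacle: one must check carefully that the matrix-multiplication corners really satisfy the Alfsen--Shultz bicomplementarity hypotheses using only the operator system axioms. One must also check that the compression is unique, so that the identification is forced.

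\textbf{Step 2} is coordinatization. Step 1 shows that $\{e\otimes E_{ii}\}$ is a complete system of orthogonal idempotents in $M_m(E)_{sa}$, with symmetric matrix units $e\otimes(E_{ij}+E_{ji})$ behaving as in $M_m(\K)_{sa}$. Since $m\ge 3$ (real case $m\ge 4$, complex $m\ge 2$, using the complex unit $iE_{ij}$ for small $m$), the Jacobson/Hanche-Olsen coordinatization theorem gives a unital $*$-algebra $D$ with $M_m(E)_{sa}\cong H_m(D)$ as JB-algebras. The isomorphism sends the $(i,j)$ Peirce spaces to $D$ placed in entry $(i,j)$. Reading off the $(1,1)$-corner identifies $D_{sa}$ with $E_{sa}$, and more precisely $D$ with $E$ as $*$-vector spaces via the $(1,2)$ Peirce space. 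Associativity of $D$ is automatic because $m\ge 4$ in the real case, which rules out octonionic coordinates, and $m\ge 3$ similarly; for complex $m=2$ the complex structure supplies it.

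\textbf{Step 3} is to make $D$ a C*-algebra. The involution is proper, since $x^*x=0$ forces $x=0$ by positivity in the JB-algebra $H_m(D)$. $D_{sa}$ is a JB-algebra as a Peirce corner, with norm equal to the order unit norm of $E$. Set $\lVert x\rVert_D=\lVert x^*x\rVert^{1/2}$ and verify the C*-identity and completeness via the embedding $x\mapsto\begin{pmatrix}0&x\\x^*&0\end{pmatrix}$ into $H_m(D)$. Finally, the positive cone of $M_m(D)$ as a C*-algebra is the cone of squares of $H_m(D)$, which equals $M_m(E)_+$. Hence the identification $E\cong D$ is a unital $m$-order isomorphism; it is also a $k$-order isomorphism for each $k\le m$, by compressing with corner projections.
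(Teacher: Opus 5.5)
Your plan for (1) follows the paper: Alfsen--Shultz bicomplementary compressions, then Jacobson/Hanche-Olsen, then a C*-norm on the coordinates. Your Step~3 via the off-diagonal embedding $x\mapsto y=x\otimes E_{12}+x^*\otimes E_{21}$ is a reasonable shortcut compared with \Cref{norm}. There $x^*x\otimes E_{22}=U_{E_{22}}(y^2)\ge 0$ is immediate, and completeness comes from closedness of the Peirce space $J_{12}$. There are, however, two genuine gaps.

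\textbf{The reduction of (2) to (1) uses a false statement.} A unital $2$-order isomorphism between unital \emph{real} C*-algebras need not be a $*$-isomorphism. On the real C*-algebra $\H$, take $\psi(q)=\bar q$.
\begin{itemize}
\item It is unital and self-adjoint.
\item $\psi_2$ is an order automorphism of $M_2(\H)_{sa}$, because $\left(\begin{smallmatrix} a & q\\ \bar q & b\end{smallmatrix}\right)\ge 0$ iff $a,b\ge 0$ and $ab\ge |q|^2$.
\item But $\psi$ is an anti-automorphism of the noncommutative algebra $\H$, and $\psi_3$ is not positive: for $v=(1,i,j)^T$ one gets $v^*\psi_3(vv^*)\,v=3-6=-3$.
\end{itemize}
The complex argument (Schwarz equality plus multiplicative domain) relies on the substitution $b\mapsto ib$, which is unavailable over $\R$.

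Because you chose $n\ge 4$, this is repairable. The map $a+ib\mapsto\left(\begin{smallmatrix} a&-b\\ b&a\end{smallmatrix}\right)$ order-embeds $M_2(A_\C)$ into $M_4(A)$. Hence a real unital $4$-order isomorphism complexifies to a complex unital $2$-order isomorphism, which is a $*$-isomorphism. The paper avoids this rigidity question altogether. It builds $A$ once, then shows $\phi_N$ is a Jordan isomorphism for each $N\in S$. It does so by placing any two spanning elements in a common small corner $p\cdot M_N(E)_{sa}\cdot p$, where the compatible JB-structure is unique (\Cref{Kap}).

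\textbf{The complex case of (1) is not established as written.} Hanche-Olsen's theorem (\Cref{HO}) requires \eqref{HO1} and \eqref{HO4}: $E_{sa}\otimes I_2$ must be a Jordan subalgebra of $M_2(E)_{sa}$ that operator commutes with $M_2(\C)_{sa}$. Your Step~1 identity $a\circ x=\frac12(a\cdot x+x\cdot a)$ yields \eqref{HO2} and \eqref{HO3}. It says nothing about $(d\otimes I_2)\circ z$ for general $z$.

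The missing idea is \Cref{circdot}(3):
\begin{itemize}
\item For every symmetry $s\in M_n(\K)_{sa}$, $U_s(d\otimes I_n)=s\cdot(d\otimes I_n)\cdot s=d\otimes I_n$.
\item In a JB-algebra, $U_sx=x$ means exactly that $x$ operator commutes with $s$, and the symmetries span $M_n(\K)_{sa}$.
\item $E_{sa}\otimes I_n$ is precisely the common fixed-point set of the Jordan automorphisms $U_s$, hence a JB-subalgebra.
\end{itemize}

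Your remark that complex $m\ge 3$ works ``similarly'' is also unsupported. Jacobson's theorem for $n=3$ only gives alternative coordinates, so associativity would need a separate argument. Instead, compress to a $2\times 2$ corner; Step~1 identifies $U_p$ with the compression, so $M_2(E)_{sa}$ inherits a compatible JB-structure. Apply Hanche-Olsen there and lift back to level $m$ by the corner argument.

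In both the real and complex cases you must also check that the coordinatizing isomorphism $\Phi$ equals $\phi_m$ for a single $\phi\colon D\to E$. This is the consistency of the Peirce identifications across different $(i,j)$, which the paper handles with the symmetries $s_{kl}$. Finally, the bicomplementarity you flagged in Step~1 is exactly \Cref{CprBicp}, including the dual-cone part.
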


This theorem is inspired by and should be compared with the holomorphic characterization of complex C*-algebras obtained by Neal--Russo in \cite{NlRs02}. They showed that for each integer $m\ge 2$ a complex operator space $E$ is $m$-isometrically isomorphic to a C*-algebra if and only if the open unit ball of $M_m(E)$ is a bounded symmetric domain of tube type. The latter condition is equivalent to the Banach space $M_m(E)$ being isometric to a JB*-algebra, i.e.\ to the complexification of a JB-algebra.\\
\ind We would like to point out that any complete order isomorphism between operator systems is a complete isometry, so that Neal and Russo's result implies part (2) of \Cref{MAINTHM2} in the complex case. However, operator systems can be $n$-order isomorphic without being $n$-isometrically isomorphic, so a new approach seems necessary for part (1).
Moreover, their proof, which takes place in the categories of JB*-triples and ternary rings of operators, does not seem to apply to real C*-algebras. By contrast, the JB-algebra framework allows for a uniform treatment of complex and real C*-algebras alike. This opens the possibility of proving theorems at once for real and complex C*-algebras as a corollary of the analogous result for JB-algebras. We illustrate this method on the Choi--Effros theorem concerning the range of a unital completely positive projection on a C*-algebra. This approach also yields a version of that theorem for a unital $n$-positive projection on a unital real C*-algebra, where $n\ge 4$.
\\ 

\section{Preliminaries}
This section contains preliminary results on order unit spaces, operator systems, real C*-algebras and JB-algebras.

\subsection{Order unit spaces and gauge-reversing bijections}

Let $(V, \le)$ be a partially ordered vector space with positive cone $V_+$.
A positive element $v\in V_+$ is called an \emph{order unit} if for every $x\in V$ there exists $\lambda \ge 0$ such that $x \le \lambda v$. An order unit $v$ is called \emph{Archimedean} if $x \le \lambda v$ for all $\lambda \ge 0$ implies $x\le 0$. An \emph{order unit space} is a triple $(V, V_+, v)$ consisting of a partially ordered vector space and an Archimedean order unit.
One defines a norm on $V$, called the \emph{order unit norm}, by 
\begin{equation}
\lVert x \rVert_v := \inf \{\lambda \ge 0: -\lambda v \le x \le \lambda v\}.
\end{equation}
The order unit space $(V, V_+, v)$ is said to be \emph{complete} if $(V, \lVert\cdot\rVert_v)$ is a Banach space. Each positive linear map $T\colon V \to V$ is automatically bounded with respect to the order unit norm, and the $\lVert\cdot\rVert_v$-operator norm is given by $\lVert T\rVert = \lVert T(v)\rVert_v$.  \\ \ind The $\lVert\cdot\rVert_v$-interior of $V_+$ is denoted $V_+^{\circ}$ and referred to as the \emph{open cone} of $V_+$. An element $x\in V$ belongs to $V_+^{\circ}$ if and only if there exists $\delta > 0$ such that $x \ge \delta v$. It follows that $V^\circ_+$ consists precisely of the order units of $(V, V_+)$. Therefore there exists a well-defined \emph{gauge-function} $M\colon V_+^{\circ} \times V_+^{\circ} \to \R_{>0}$ given by
\begin{equation}\label{MDef}
    M(x/y) := \inf \{\lambda > 0: x \le \lambda y\}.
\end{equation}
\vspace{-1em}
\begin{definition}\label{grDef}
A map $\Phi\colon V_+^{\circ} \to V_+^{\circ}$ is called \emph{gauge-reversing} if it reverses gauges, i.e.
\begin{equation}
    M(\Phi(x)/\Phi(y))=M(y/x) \text{ for all }x,y \in V_+^{\circ}.
\end{equation}
\end{definition}
To give an equivalent definition, let us say that $\Phi\colon V_+^{\circ} \to V_+^{\circ}$ is \emph{homogeneous of degree} $-1$ if $\Phi(\lambda x)=\lambda^{-1}\Phi(x)$ for all $\lambda > 0$ and $x\in V_+^{\circ}$. We call $\Phi$ an \emph{order-anti-isomorphism} if it is bijective and $\Phi(x) \le \Phi(y) \iff y \le x$ holds for all $x,y\in V_+^{\circ}$.

\begin{lemma} Let $(V, V_+, v)$ be an order unit space. A mapping $\Phi\colon V_+^{\circ} \to V_+^{\circ}$ is a gauge-reversing bijection if and only if it is an order-anti-isomorphism and homogeneous of degree $-1$.
\end{lemma}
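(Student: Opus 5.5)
Both directions reduce to two elementary properties of the gauge function $M$ on $V_+^{\circ}$, which follow directly from \eqref{MDef} and the Archimedean property. First, for all $x,y\in V_+^{\circ}$ we have
\[
x\le y \iff M(x/y)\le 1 .
\]
The forward implication is immediate. Conversely, if $M(x/y)\le 1$, then $x\le\lambda y$ for every $\lambda>1$, so $x-y\le(\lambda-1)y\le(\lambda-1)\lVert y\rVert_v v$ for all such $\lambda$, and the Archimedean property gives $x\le y$. The same argument shows that the infimum in \eqref{MDef} is attained, i.e.\ $x\le M(x/y)\,y$. Second, $M$ is homogeneous: $M(\alpha x/\beta y)=\alpha\beta^{-1}M(x/y)$ for $\alpha,\beta>0$.

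For the direction ``order-anti-isomorphism and homogeneous of degree $-1$ $\Rightarrow$ gauge-reversing'', fix $x,y$ and $\lambda>0$. We have $\Phi(x)\le\lambda\Phi(y)$ iff $\Phi(x)\le\Phi(\lambda^{-1}y)$ by homogeneity, iff $\lambda^{-1}y\le x$ by order-reversal, iff $y\le\lambda x$. Taking infima over $\lambda$ gives $M(\Phi(x)/\Phi(y))=M(y/x)$. Bijectivity is part of the definition of order-anti-isomorphism.

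For the converse, let $\Phi$ be a gauge-reversing bijection. Order-reversal in both directions comes from the first property: $\Phi(x)\le\Phi(y)$ iff $M(\Phi(x)/\Phi(y))\le1$ iff $M(y/x)\le1$ iff $y\le x$. Homogeneity is the only step needing an idea; I would pin down $\Phi(\lambda x)$ by sandwiching. Since $M(\Phi(\lambda x)/\Phi(x))=M(x/\lambda x)=\lambda^{-1}$, the attained infimum gives $\Phi(\lambda x)\le\lambda^{-1}\Phi(x)$. Likewise $M(\Phi(x)/\Phi(\lambda x))=M(\lambda x/x)=\lambda$ gives $\Phi(x)\le\lambda\Phi(\lambda x)$, i.e.\ $\lambda^{-1}\Phi(x)\le\Phi(\lambda x)$. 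Antisymmetry of the cone order then yields $\Phi(\lambda x)=\lambda^{-1}\Phi(x)$. Together with the assumed bijectivity, this shows $\Phi$ is an order-anti-isomorphism, homogeneous of degree $-1$.

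The only delicate point is the attainment of the infimum in \eqref{MDef}, which needs the Archimedean assumption. Everything else is formal.
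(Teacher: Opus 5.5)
Your proof is correct and complete. The paper gives no argument of its own for this lemma; it only cites \cite[Lemma 2.5]{LRW25}. Your proposal therefore supplies the self-contained proof that the paper outsources. It reduces everything to two elementary facts about the gauge function: $x\le y \iff M(x/y)\le 1$, together with attainment of the infimum in \eqref{MDef} (this is exactly where the Archimedean property enters), and the scaling rule $M(\alpha x/\beta y)=\alpha\beta^{-1}M(x/y)$.

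The citation keeps the paper short. Your route shows that only two things are used: the Archimedean property, and the properness $V_+\cap -V_+=\{0\}$ (in $M(x/x)=1$ and in the final antisymmetry step). It also shows, incidentally, that gauge reversal alone already gives order reversal in both directions, homogeneity of degree $-1$, and injectivity. Indeed, if $\Phi(x)=\Phi(y)$ then $M(x/y)=M(y/x)=1$, so $x=y$. Hence the bijectivity hypothesis contributes only surjectivity.

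Two cosmetic remarks. First, the bound $y\le\lVert y\rVert_v v$ itself needs the Archimedean property, but any $c$ with $y\le cv$ would serve equally well, and such a $c$ exists because $v$ is an order unit. Second, you are rightly using the Archimedean property in its standard form: $x\le\varepsilon v$ for all $\varepsilon>0$ implies $x\le 0$. That is what the paper's phrasing ``for all $\lambda\ge 0$'' is evidently meant to say.
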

\begin{proof}
See for example \cite[Lemma 2.5]{LRW25}
\end{proof}

It is known that gauge-reversing bijections are automatically infinitely differentiable and, in fact, real-analytic \cite[Thm. 5.20]{OCJB}. Each gauge-reversing bijection $\Phi$ is an isometry for 
\emph{Thompson's metric}, defined by
\begin{equation}\label{dT}
d_T(x, y) := \log \max \{M(x/y), M(y/x)\},\quad x,y\in V_+^\circ.
\end{equation}
A \emph{Thompson symmetry} of $V_+^\circ$ at a point $p \in V_+^\circ$ is an involutive isometry of $(V_+^\circ, d_T)$ having $p$ as an isolated fixed point. It was shown in \cite[Theorem 3.5]{LRW25} that each Thompson symmetry is a gauge-reversing bijection. Conversely, if $V_+^\circ$ admits any gauge-reversing bijection, then at each point of $V_+^\circ$ there exists a unique Thompson symmetry  \cite[Lemmas 5.7, 5.9 and 5.10]{OCJB}.\\
\ind The relation between gauge-reversing bijections and JB-algebras is discussed in \Cref{JBprel}.  

\subsection{Operator systems}
We briefly introduce operator systems. We refer to the book \cite{Paucboa} for results on complex operator systems and to \cite[Sect. 3]{BleRus} for a systematic discussion which of those results remain valid for real operator systems.\\

Given a vector space $F$ over a field $\K$ and integers $m,n \ge 1$, we write $M_{mn}(F)$ for the $F$-vector space of $m \times n$-matrices with entries in $F$. In case $m=n$ we simply write $M_n(F)$. The unit element of the $\K$-algebra $M_n(\K)$, i.e.\ the $n \times n$-identity matrix, is denoted $I_n$. Note that $M_{mn}(F)$ has a natural $M_m(\K)$-$M_n(\K)$-bimodule structure. More generally, for integers $k,l,m,n\ge 1$ matrix multiplication using the $\K$-vector space structure on $F$ defines a map
\begin{align*}
M_{k,m}(\K) \times M_{m,n}(F) \times M_{n,l}(\K) &\to M_{k,l}(F),\\
(a, x, b) &\mapsto a \cdot x \cdot b.
\end{align*}
Although it is tempting to denote the bimodule action of $M_n(\K)$ on $M_n(E)$ by mere juxtaposition, we shall refrain from doing so at this stage, the reason being that we shall consider on $M_n(E)$ also certain Jordan and associative products, whose compatibility with the bimodule action is yet to be established.
Without further comment, we identify $M_{mn}(F)$ with the tensor product $F \otimes_{\K} M_{mn}(\K)$.\\
\ind An \emph{involution} $*$ on a real (resp. complex) vector space $F$ is a linear (resp.\ conjugate-linear) map $*\colon F \to F, x \mapsto x^*$ which is equal to its own inverse, i.e.\ $(x^*)^*=x$ for every $x\in V$. An involution on $F$ induces an involution on $M_n(F)$  given by $(x^*)_{ij} = (x_{ji})^*$ for each $x = (x_{ij}) \in M_n(F)$. We write $M_n(F)_{sa} = \{x\in M_n(F)\colon x = x^*\}$ for the real subspace of self-adjoint elements in $M_n(F)$. 

\begin{definition} Let $\K \in \{\R, \C\}$. An \emph{operator system over} $\K$ is a quadruple $(E, *, \{M_n(E)_+\}_n, e)$ consisting of a $\K$-vector space $E$, an involution $*\colon E \to E$, a sequence of cones $M_n(E)_+ \subset M_n(E)_{sa}$ and an element $e \in E$ such that for every $m \times n$-matrix $a \in M_{mn}(\K)$ one has
\begin{equation}\label{matrOrd}
a^{*} \cdot M_m(E)_+ \cdot a \subset M_n(E)_+
\end{equation}
and $e \otimes I_n \in M_n(E)$ is an Archimedean order unit of $(M_n(E)_{sa}, M_n(E)_+)$ for all $n,m \in \Z_{\ge 1}$. 
\end{definition}

Throughout this article, $M_n(\K)$ is considered to be embedded in $M_n(E)$ via the map $a \mapsto e \otimes a$. In particular, we write $I_n$ instead of $e \otimes I_n$ for the order unit of $M_n(E)_{sa}$. Owing to the presence of this order unit, the real linear span of $M_n(E)_+$ equals $M_n(E)_{sa}$, so in the complex case the involution $*$ is determined by the positive cone $M_n(E)_+$.\\
\ind Let $(F, *, \{M_n(F)_+\}_n, f)$ be a second operator system over the same field $\K$ and let $\phi\colon E \to F$ be a linear map. We say that $\phi$ is \emph{unital} if $\phi(e) = f$ and \emph{self-adjoint} if $\phi(x^*)=\phi(x)^*$ for every $x\in E$. Let $n$ be a positive integer. Entrywise application of $\phi$ defines a map
\begin{align*}
\phi_n\colon M_n(E) &\to M_n(F),\\
(x_{ij})_{ij} &\mapsto (\phi(x_{ij}))_{ij}.\notag 
\end{align*}
A self-adjoint linear map $\phi\colon E \to F$ is called $n$\emph{-positive} (resp.\ an $n$\emph{-order embedding}, resp.\ an $n$\emph{-order isomorphism}) if the map $\phi_n$ is positive (resp.\ an order embedding, resp.\ an order isomorphism). It follows readily from \eqref{matrOrd} that when $m\le n$ and $\phi_n$ has one of these properties, so does $\phi_m$. We call $\phi$ \emph{completely positive} (resp.\ a complete order embedding, etc.) if $\phi_n$ is positive (resp.\ an order embedding, etc.) for every $n \ge 1$. A \emph{state} on $E$ is a unital positive (self-adjoint) linear map $\phi\colon E \to \K$. Each state $\phi$ is completely positive (for real operator systems, this is proved in \cite[Lemma 5.14]{BleTep}).\\
\ind If $E$ is a self-adjoint unital subspace of bounded operators on a Hilbert space $H$ over $\K$, then $(E, *, \{M_n(E)_+\}_n, \Id_H)$ is an operator system where * is given by the Hilbert space adjoint, the cone $M_n(E)_+$ consists of the positive-definite operators on the $n$-fold direct sum $H^{\oplus n}$, and $\Id_H$ is the identity on $H$. Conversely, for each operator system $(E, *, \{M_n(E)_+\}_n, I)$ there exists a Hilbert space $H$ over $\K$ and a unital (self-adjoint) complete order embedding $\pi\colon E \to B(H)$ (the complex case is due to Choi--Effros \cite[Thm. 4.4]{ChEff77}, the real case is the variant of a result by Ozawa \cite[Section 17]{Ozawa} given by Blecher--Russell in \cite[Thm. 2.7]{BleRus}).\\
\ind It follows that there exists a sequence of norms $\lVert\cdot\rVert_n$ on $M_n(E)$, extending the order unit norm $\lVert\cdot\rVert_{I_n}$ on $M_n(E)_{sa}$, such that each unital complete order embedding of $E$ into $B(H)$ is a complete isometry. These norms satisfy for all $a\in M_{m,n}(\K)$, $b \in M_{n,m}(\K)$ and $x \in M_{n}(E)$ 
\begin{equation}\label{mtrNorm}
    \lVert a \cdot x \cdot b\rVert_{m} \le \lVert a \rVert \lVert x\rVert_{n} \lVert b \rVert.
\end{equation}

\subsection{Real C*-algebras}
We assume the reader to be familiar with the basic theory of complex C*-algebras, but will give a brief discussion of real C*-algebras based on \cite{Schr93} and \cite{Ros16}. For a more in-depth treatment of real C*-algebras, the reader may consult \cite{Gdl82}.

\begin{definition}\label{realCDef}
A \emph{real $C^*$-algebra} is a real associative $*$-algebra $A$ which is complete for a norm $\lVert\cdot\rVert$ satisfying for all $a,b\in A$:
\begin{enumerate}
\item $\lVert ab\rVert \le \lVert a\rVert \lVert b\rVert$,
\item $\lVert a^*a \rVert = \lVert a \rVert^2$, and,
\item $\lVert a^*a \rVert \le \lVert a^*a + b^* b \rVert$ .
\end{enumerate}
\end{definition}
Conditions (i)-(iii) may be replaced by the requirement that $A$ be isometrically isomorphic to a norm closed *-algebra of bounded operators on a real Hilbert space. If $A$ is a real C*-algebra, then its complexification $A_{\C} := A \otimes_\R \C$ becomes a complex C*-algebra. On the other hand, if a complex C*-algebra $B$ admits an involutive complex-linear $*$-antiautomorphism $\theta$, then $B$ is the complexification $A_\C$ of the real C*-algebra $A := \{x\in B\colon \theta(x) = x^*\}$. Moreover, each complex C*-algebra $B$ becomes a real C*-algebra by restriction of scalars.\\
\ind The category of commutative real C*-algebras is equivalent to the category of pairs $(X, \tau)$ where $X$ is a locally compact Hausdorff space and $\tau$ is an involutive autohomeomorphism of $X$. This equivalence is implemented by the functor sending the pair $(X, \tau)$ to the real C*-algebra $$C_0(X, \tau) := \{f \in C_0(X): f(\tau(x)) = \overline{f(x)} \text{ for all }x \in X\}.$$

The following definition should be clear.

\begin{definition} A (real or complex) normed *-algebra $A$ is called a \emph{pre-C*-algebra} if it satisfies all conditions to be a C*-algebra except that its norm need not be complete.
\end{definition}

It is straightforward to verify that the norm completion of a real (resp.\ complex) pre-C*-algebra is a real (resp.\ complex) C*-algebra.


\subsection{JB-algebras} \label{JBprel}
We give the definition of and preliminary results on JB-algebras.\\

A real \emph{Jordan algebra} is a real vector space $J$ endowed with a bilinear commutative (not necessarily associative) product $(a, b) \mapsto a \circ b$, which satisfies for all $a,b\in J$ the \emph{Jordan identity}
\begin{equation}\label{Jd}
a \circ (b \circ a^2) = (a \circ b) \circ a^2.
\end{equation}
We call $J$ \emph{unital} if it has an \emph{identity element} $1$, i.e.\ $1 \circ a = a$ for all $a \in J$. Each element $a\in J$ defines a linear map $T_a \in \End_\R(J)$ given by
\begin{equation}\label{TaDef}
T_a(x) = a \circ x  
\end{equation}
and a quadratic representation map
\begin{equation}\label{UaDef}
    U_a := 2T_a^2 - T_{a^2} \in \End_\R(J).
\end{equation}
Furthermore, for $a,b,c\in J$ we define the \emph{triple product}
\begin{equation}\label{tripDef}
    \{a, b, c\} := a \circ (b \circ c) + (a \circ b) \circ c - b \circ (a \circ c).
\end{equation}
Since $\{a, b, a\} = U_a(b)$, the triple product is the linearization of the quadratic product given by $(a, b) \mapsto U_a(b)$. Elements $a,b\in J$ are said to \emph{operator commute} if $T_aT_b = T_bT_a$. A Jordan algebra is called \emph{associative} if its product $\circ$ is associative, equivalently, if any two of its elements operator commute.

\begin{definition}
A \emph{JB}-algebra is a real Jordan algebra $A$ which is complete for a norm $\lVert \cdot \rVert$ satisfying for all $a,b\in A$:
\begin{enumerate}
    \item $\lVert a\circ b\rVert \le \lVert a\rVert \lVert b\rVert$;
    \item $\lVert a^2\rVert = \lVert a\rVert^2$;
    \item $\lVert a^2\rVert \le \lVert a^2+b^2\rVert$.
\end{enumerate}
\end{definition}
Each norm closed Jordan subalgebra $B$ of a JB-algebra $A$ is again a JB-algebra for the restriction of the norm. We call $B$ a \emph{JB-subalgebra} of $A$.

\begin{example}[JC-algebras]\label{JCExp} Let $A$ be a real or complex C*-algebra, and denote its real linear subspace of self-adjoint elements by $A_{sa} := \{x\in A: x^* = x\}$. Let $J$ be a norm closed subspace of $A_{sa}$ which is closed under taking squares. Then $J$ is a JB-algebra in the \emph{Jordan product} 
\begin{equation}
    a \circ b = \tfrac{1}{2}(ab+ba),
\end{equation}
with respect to the restriction of the norm of $A$  to $J$. Indeed, the norm axioms follow directly from Definition \ref{realCDef}(i-iii) by taking $a$ and $b$ there to be self-adjoint. The Jordan identity is satisfied since for any $a,b,c\in J$ we have
\begin{equation}\label{TaTb}
[T_a, T_b](c) = \tfrac{1}{4}[[a,b],c].
\end{equation}
In fact, $a$ and $b$ operator commute in $J$ if and only if they commute in $A$, i.e.\ $[a,b]:=ab-ba=0$ \cite[Prop. 1.49]{AS03}.
The quadratic and triple products in $J$ are given by
\begin{align}\label{qtrsp}
    U_a(b) &= aba,\\
    \{a,b,c\}& = \frac{1}{2}(abc+cba). 
\end{align}
JB-algebras $J$ of this kind are known as \emph{JC-algebras}.
\end{example}

Let $J$ be a JB-algebra. For a subset $S\subset J$, the set of those elements in $J$ which operator commute with all elements of $S$ is denoted
\begin{equation}
S' := \{a\in J: [T_a, T_b]=0 \text{ for each }b \in S\}
\end{equation}
and called the \emph{operator commutant} of $S$.

\begin{lemma}\label{OpCmm} Let $S$ be a subset of a JB-algebra $J$. Then the operator commutant $S'$ is a JB-subalgebra of $J$.
\end{lemma}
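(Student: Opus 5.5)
The plan is to check the three properties of a JB-subalgebra separately: $S'$ is a linear subspace, it is norm closed, and it is closed under the Jordan product. Since $S' = \bigcap_{b\in S}\{b\}'$ and an intersection of JB-subalgebras is again one, it suffices to treat a singleton $S=\{b\}$.

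\emph{Linear subspace.} The map $a\mapsto T_a$ is linear, so $a\mapsto [T_a,T_b]$ is linear. Hence $\{b\}'$, being the kernel of this map, is a linear subspace. It contains the identity whenever $J$ is unital, since $T_1=\Id$.

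\emph{Norm closedness.} By axiom (i) of a JB-algebra, $\lVert T_a\rVert\le\lVert a\rVert$, so $a\mapsto T_a$ is a bounded linear map into the bounded operators on $J$. Consequently $a\mapsto [T_a,T_b]$ is continuous, and its kernel $\{b\}'$ is norm closed.

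\emph{Closure under the product.} By polarization,
\[a\circ c=\tfrac12\bigl((a+c)^2-a^2-c^2\bigr),\]
and $\{b\}'$ is already a linear subspace. So it is enough to show that $a\in\{b\}'$ implies $a^2\in\{b\}'$. This is the main obstacle. Operator commutation in $J$ is a statement about operators on all of $J$, not just on the subalgebra generated by $a$ and $b$. Therefore one cannot simply pass to a special subalgebra via Shirshov--Cohn and use \eqref{TaTb}.

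I would first try the linearized Jordan identity in operator form,
\[[T_a,T_{b\circ c}]+[T_b,T_{c\circ a}]+[T_c,T_{a\circ b}]=0.\]
Setting $c=a$ gives
\[[T_{a^2},T_b]=-2[T_a,T_{a\circ b}],\]
so it remains to show $[T_a,T_{a\circ b}]=0$ under the hypothesis $[T_a,T_b]=0$. For this I would combine the second standard operator identity,
\[T_{(a\circ b)\circ c}=T_aT_{b\circ c}+T_bT_{a\circ c}+T_cT_{a\circ b}-T_aT_cT_b-T_bT_cT_a,\]
specialized to $c=a$, with the hypothesis. The aim is to express $T_{a\circ b}$ through $T_a$, $T_b$ and $T_{a^2}$ in a way that visibly commutes with $T_a$, using that $T_a$ commutes with $T_{a^2}$ (power associativity).

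If this algebraic manipulation does not close up, the fallback is the standard JB-algebra route (Alfsen--Shultz, or Hanche-Olsen--St\o rmer). There, $b$ operator commutes with $a$ if and only if $b$ operator commutes with every element of the norm-closed associative subalgebra $C(a)$ generated by $a$. This is proved via the continuous functional calculus, approximating elements of $C(a)$ by spectral projections, and using that an element operator commutes with a projection $p$ if and only if it lies in the Peirce $0$- and $1$-spaces of $p$. In particular this gives $a^2\in\{b\}'$, which completes the proof.
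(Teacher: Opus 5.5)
Your reduction to a single $b$, the linear-subspace and closedness arguments, and the polarization reduction to squares are all fine. They match the paper's ``bilinearity and continuity'' step. All the content of the lemma sits in the implication $[T_a,T_b]=0\Rightarrow[T_{a^2},T_b]=0$, and there your primary plan cannot work. The implication fails in general Jordan algebras, even special ones. Take the real Weyl algebra $A$ generated by $x,y$ with $xy-yx=1$. In $A^+$ the formula $[T_u,T_v](w)=\tfrac14[[u,v],w]$ still holds, since the computation behind \eqref{TaTb} uses only associativity. Hence $[T_x,T_y]=0$, because $[x,y]=1$ is central. But $[x^2,y]=2x$ gives $[T_{x^2},T_y](y)=\tfrac12\neq 0$. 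So no manipulation of Jordan operator identities, such as the two you list, combined with the hypothesis can produce $[T_a,T_{a\circ b}]=0$; analytic input is indispensable. For C*-algebras that input is the Kleinecke--Shirokov theorem: $[a,[a,b]]=0$ makes the skew-adjoint element $[a,b]$ quasinilpotent, hence zero. This is what underlies \cite[Prop.~1.49]{AS03}.

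Your fallback asserts that $b$ operator commutes with all of $C(a)$ once it operator commutes with $a$; this is equivalent to the lemma for $S=\{b\}$. The paper does not prove this either; it cites \cite[Thm.~3.13(a),(d)]{vdWopCm}. Read as a citation, your proposal has the same shape as the paper's proof, but the sketch you attach does not prove the claim.

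First, a JB-algebra need not contain spectral projections, so you must pass to $J^{**}$. Operator commutation of $a,b\in J$ persists there by weak*-density of $J$ and separate weak*-continuity of the product. More seriously, approximation by spectral projections plus the Peirce criterion only gives the easy direction: if $b$ operator commutes with the spectral projections of $a$, then it operator commutes with everything they approximate. The missing step is getting from $[T_a,T_b]=0$ to the spectral projections. Here the two-generator (Shirshov--Cohn) theorem does help after all:
\begin{enumerate}
\item $a$ and $b$ still operator commute in the JB-subalgebra $B$ generated by $a$, $b$ and $1$ (unitize if needed), which is a JC-algebra.
\item Hence $ab=ba$ there by \cite[Prop.~1.49]{AS03}, so $B$ and its weak*-closure in $J^{**}$ are associative.
\item For spectral projections $p$ of $a$ and $q$ of $b$, which lie in this associative algebra, $U_pq+U_{1-p}q=p\circ q+(1-p)\circ q=q$.
\item By the Peirce criterion, $p$ and $q$ then operator commute in all of $J^{**}$. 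For projections this is a condition on the pair alone, which is why the local information suffices.
\item Approximating $b$ by combinations of the $q$'s, and then $a^2$ by combinations of the $p$'s, yields $[T_{a^2},T_b]=0$ on $J^{**}$, hence on $J$.
\end{enumerate}
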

\begin{proof}
Bilinearity and continuity of the product readily imply that $S'$ is a closed linear subspace of $J$. It follows from \cite[Theorem 3.13(a),(d)]{vdWopCm} that $S'$ is closed under taking squares. Therefore, $S'$ is a JB-subalgebra of $J$.
\end{proof}

A JB-algebra $J$ is partially ordered by its \emph{cone of squares}
\begin{equation}\label{Jpl}
    J_+ := \{x^2: x \in J\}.
\end{equation}
For every $x\in J$ the quadratic representation map $U_x\colon J \to J$ is positive with operator norm $\lVert U_x\rVert = \lVert x\rVert^2$ \cite[Thm. 1.25]{AS03}. If $J$ is a nonunital JB-algebra, then $\tilde{J} := J \oplus \R 1$ endowed with the product $(a + \lambda 1) \circ (b + \mu 1) = (a \circ b + \lambda b + \mu a) + \lambda\mu 1$ is a unital JB-algebra \cite[Thm. 3.3.9]{HOSt84} which (isometrically) contains $J$ as a JB-subalgebra.\\
\ind Now assume that $J$ has an identity element $1$. Then $(J, J_+, 1)$ is an order unit space, and the order unit norm coincides with the JB-algebra norm. An element $a\in J$ is called \emph{invertible} if there exists an element $b\in J$ which operator commutes with $a$ and satisfies $a\circ b = 1$. The element $b$, if it exists, is unique and will be denoted $b := a^{-1}$. A positive element $a\in J_+$ is invertible if and only if it belongs to the open cone of $J$, i.e.\ $a \in J_+^{\circ}$, in which case $a^{-1} \in J_+^{\circ}$ as well.

\begin{lemma}\label{invgr}
The inversion map $i\colon J_+^{\circ} \to J_+^{\circ}$, $i(x) = x^{-1}$ is a gauge-reversing bijection. 
\end{lemma}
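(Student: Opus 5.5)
The plan is to use the equivalent characterization from the lemma above (attributed to \cite[Lemma 2.5]{LRW25}): a map $V_+^\circ\to V_+^\circ$ is a gauge-reversing bijection if and only if it is an order-anti-isomorphism that is homogeneous of degree $-1$. So it suffices to show three things. First, $i$ maps $J_+^\circ$ bijectively onto itself. Second, $(\lambda x)^{-1}=\lambda^{-1}x^{-1}$. Third, $x\le y$ implies $y^{-1}\le x^{-1}$ for $x,y\in J_+^\circ$.

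Bijectivity and homogeneity are quick. By the facts recalled just before the statement, $x\in J_+^\circ$ implies $x^{-1}\in J_+^\circ$. Since $x$ operator commutes with $x^{-1}$ and $x\circ x^{-1}=1$, uniqueness of inverses gives $(x^{-1})^{-1}=x$. Hence $i$ is an involution, and in particular a bijection of $J_+^\circ$. Homogeneity also follows from uniqueness: $\lambda^{-1}x^{-1}$ operator commutes with $\lambda x$, and their product is $1$.

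The substantive step is order reversal, and I would deduce it from the quadratic representation. I would use the standard identities in a unital JB-algebra:
\[
U_a(a^{-1})=a, \qquad U_a^{-1}=U_{a^{-1}},
\]
together with positivity of each $U_a$, which was recalled above from \cite[Thm. 1.25]{AS03}. Alternatively, one can work inside the JB-subalgebra generated by $x$ and $1$, which is associative and isomorphic to some $C(X)$. I would also use that $U_{x^{1/2}}$ is an order automorphism for $x\in J_+^\circ$, since its inverse $U_{x^{-1/2}}$ is positive as well.

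Now suppose $x\le y$. Applying $U_{y^{-1/2}}$ gives $z:=U_{y^{-1/2}}(x)\le 1$, with $z\in J_+^\circ$. Inside the associative subalgebra generated by $z$, the inequality $0<z\le 1$ gives $z^{-1}\ge 1$. By the inverse formula $(U_a b)^{-1}=U_{a^{-1}}b^{-1}$ from \cite{AS03}, we have $z^{-1}=U_{y^{1/2}}(x^{-1})$. Applying the positive map $U_{y^{-1/2}}$ then yields $x^{-1}\ge U_{y^{-1/2}}(1)=y^{-1}$.

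The converse implication needs no separate argument. Because $i$ is an involution, applying the forward implication to $y^{-1}\le x^{-1}$ gives $x\le y$.

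The main obstacle is justifying the quadratic-representation identities, namely $(U_ab)^{-1}=U_{a^{-1}}b^{-1}$, $U_{a^{1/2}}^2=U_a$, and $U_a U_{a^{-1}}=\Id$. I expect to cite these from \cite{AS03} (via Macdonald's theorem) rather than reprove them. The paper itself points to \cite[Lemma 1.31]{AS03} for exactly this order-reversal fact, so in the end the proof may simply reduce to that citation combined with the lemma above.
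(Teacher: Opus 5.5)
Your proposal is correct and follows the same route as the paper: it reduces to homogeneity of degree $-1$ plus order reversal via the equivalence lemma, where the paper simply cites \cite[Lemma 1.31]{AS03} for the order reversal. Your compression argument with $U_{y^{-1/2}}$, the identity $(U_ab)^{-1}=U_{a^{-1}}b^{-1}$ and functional calculus in the associative subalgebra generated by $z$ is a valid self-contained proof of that cited fact, and your observation that $i$ is an involution makes explicit the bijectivity that the paper leaves implicit.
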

\begin{proof}
Clearly $i$ is homogeneous of degree $-1$. The proof that $i$ is an order-anti-isomorphism is given in \cite[Lemma 1.31]{AS03}.
\end{proof}

The \emph{spectrum} of an element $a$ in the unital JB-algebra $J$ is the set
\begin{equation}
    \sigma_J(a) := \{\lambda \in \R: \lambda 1 - a \text{ is not invertible in }J\}.
\end{equation}
The spectrum $\sigma_J(a)$ is a non-empty compact subset of $\R$ and $\lVert a\rVert = \max\{|\lambda|: \lambda \in \sigma(a)\}$. The Jordan algebra structure of a unital JB-algebra is uniquely determined by the underlying order unit space structure, as the following theorem shows.
\begin{theorem}\label{Kap}
Let $\Phi\colon H \to J$ be a unital order isomorphism between unital JB-algebras. Then $\Phi$ is an (isometric) isomorphism of JB-algebras.
\end{theorem}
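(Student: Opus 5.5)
The plan is to recover the Jordan product from order-theoretic data that $\Phi$ preserves, namely the inversion map, and then read off the product from the inversion map alone. Since $\Phi$ is a unital order isomorphism, it maps $H_+$ onto $J_+$ and $H_+^\circ$ onto $J_+^\circ$. Because $\Phi(1)=1$, it also preserves the order unit norm $\lVert x\rVert = \inf\{\lambda\ge 0: -\lambda 1\le x\le\lambda 1\}$. As the order unit norm coincides with the JB-norm, $\Phi$ is an isometry. It therefore suffices to show that $\Phi(x^2)=\Phi(x)^2$ for all $x\in H$; polarization, $x\circ y=\tfrac14((x+y)^2-(x-y)^2)$, then gives multiplicativity.

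\textbf{Step 1 (inversion is preserved).} I would show that $\Phi(x^{-1})=\Phi(x)^{-1}$ for $x\in H_+^\circ$. By Lemma \ref{invgr}, both $i_J\circ\Phi$ and $\Phi\circ i_H$ are gauge-reversing bijections $H_+^\circ\to J_+^\circ$. Hence $\Psi:=\Phi^{-1}\circ i_J\circ\Phi\circ i_H$ is a gauge-preserving bijection of $H_+^\circ$ that fixes $1$. I expect this to be the main obstacle. The first approach I would try uses the uniqueness of Thompson symmetries from \cite[Lemmas 5.7, 5.9, 5.10]{OCJB}. The map $i_H$ is an involutive Thompson isometry with $1$ as an isolated fixed point: its fixed points in $H_+^\circ$ are the $x$ with $x^2=1$ and $x>0$, so $x=1$ by spectral theory. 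Hence $i_H$ is the Thompson symmetry at $1$. The map $\Phi^{-1} i_J\Phi$ is also an involutive isometry of $(H_+^\circ,d_T)$, because $\Phi$ preserves gauges. Its fixed points are the $\Phi^{-1}$-images of the fixed points of $i_J$, so $1$ is again an isolated fixed point. By uniqueness, $\Phi^{-1}i_J\Phi=i_H$.

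If that uniqueness result is unavailable in this form, the fallback is a direct argument. For $x\in H_+^\circ$, the element $x^{-1}$ is characterized order-theoretically by $M(x^{-1}/y)=M(y^{-1}/x)$. One could instead characterize $x^{-1}$ via the Alfsen--Shultz description of compressions, or pass to the associative subalgebra $C(\sigma(x))$ generated by $x$ and use Kadison's commutative theorem. The difficulty with the latter is knowing that $\Phi$ maps that subalgebra onto an associative subalgebra.

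\textbf{Step 2 (squares from inverses).} Hua's identity expresses squares rationally through inverses, which I would use in the form
\[
x^2 = x - \bigl(x^{-1} + (1-x)^{-1}\bigr)^{-1},
\]
valid in the associative subalgebra generated by $x$ and $1$ whenever $x$ and $1-x$ are invertible. To see this, note that $x^{-1}+(1-x)^{-1}=\bigl(x(1-x)\bigr)^{-1}$, whose inverse is $x-x^2$. Take $x$ with $\sigma(x)\subset(0,1)$, i.e.\ $0<\delta\le x\le 1-\delta$. Then $1-x$, $x^{-1}+(1-x)^{-1}$ and all intermediate elements lie in $H_+^\circ$. Since $\Phi$ is linear, unital and commutes with inversion on the open cone by Step 1, we obtain $\Phi(x^2)=\Phi(x)^2$ for every such $x$.

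\textbf{Step 3 (extension to all of $H$).} A general $x\in H$ is an affine image $x=\alpha y+\beta 1$ of some $y$ with spectrum in $(\tfrac14,\tfrac34)$. Then $x^2=\alpha^2y^2+2\alpha\beta y+\beta^2 1$. Unitality and linearity of $\Phi$, combined with Step 2 applied to $y$, give $\Phi(x^2)=\Phi(x)^2$. Polarization then finishes the proof that $\Phi$ is a Jordan isomorphism, and Step 0 shows it is isometric.
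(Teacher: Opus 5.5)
Your proof is correct, and it takes a genuinely different route from the paper. The paper gives no argument of its own; it cites Alfsen--Shultz \cite[Thm. 2.80]{AS03}, the JB-algebra version of Kadison's classical theorem. That is a spectral-theoretic argument which works with squares directly and makes no use of inversion or of Thompson's metric.

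You instead pass through the inversion map, and each step holds up:
\begin{enumerate}
\item[(1)] In Step 1, $\Phi$ is a $d_T$-isometry, so $\Phi^{-1} i_J \Phi$ is an involutive $d_T$-isometry whose only fixed point is $1$. The uniqueness of Thompson symmetries recorded in the preliminaries applies, since $i_H$ is a gauge-reversing bijection. Hence $\Phi^{-1} i_J \Phi = i_H$.
\item[(2)] In Step 2, for $\delta\le x\le 1-\delta$ every element appearing in $x^2=x-(x^{-1}+(1-x)^{-1})^{-1}$ lies in $H_+^\circ$. These elements also lie in the associative JB-subalgebra generated by $x$ and $1$, whose elements operator commute, so the inverses there are the Jordan inverses in $H$. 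The same holds for $\Phi(x)$ in $J$.
\item[(3)] The affine rescaling in Step 3 and the final polarization are routine.
\end{enumerate}

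What your route buys is a proof in the spirit of the paper. Steps 2 and 3 show that any unital linear map carrying $H_+^\circ$ into $J_+^\circ$ and intertwining the inversions is already a Jordan homomorphism. Step 1 shows that a unital order isomorphism must intertwine the inversions, because inversion is pinned down by the Thompson geometry.

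What it costs is that all the real work sits in the uniqueness of Thompson symmetries from \cite{OCJB}, which builds on \cite{LRW25}. That result is far deeper and more recent than the statement being proved. If you rely on it, you should check that its proof in \cite{OCJB} does not itself use the fact that unital order isomorphisms of JB-algebras are Jordan isomorphisms. The cited classical argument, by contrast, is self-contained within standard JB-algebra theory.

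Your ``fallback'' remarks in Step 1 can be dropped. The condition $M(x^{-1}/y)=M(y^{-1}/x)$ involves inverses on both sides, so it restates gauge-reversal of $i_H$ rather than characterizing $x^{-1}$ order-theoretically.
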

\begin{proof}
See \cite[Thm. 2.80]{AS03}.
\end{proof}

Let $(V, V_+, v)$ be a complete order unit space. A \emph{compatible JB-algebra structure} on $(V, V_+, v)$ is a structure of JB-algebra on $V$ for which $v$ is an identity element and the cone of squares equals $V_+$. It follows from \Cref{Kap} that there can be at most one compatible JB-algebra structure on $(V, V_+, v)$. In view of \Cref{invgr}, a necessary condition for existence is that $V_+^{\circ}$ admit a gauge-reversing bijection.  M. Roelands and the author have shown that this condition is also sufficient in \cite{OCJB}. There, the following order-theoretic characterization of the class of order unit spaces admitting a compatible JB-algebra structure was proven.

\begin{theorem}[\cite{OCJB}]\label{MRJB}
A complete order unit space admits a compatible JB-algebra structure if and only if the interior of its positive cone admits a gauge-reversing bijection.
\end{theorem}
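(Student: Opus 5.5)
Necessity is immediate. If $(V,V_+,v)$ carries a compatible JB-algebra structure, then $V_+^\circ$ is the set of positive invertible elements, and by \Cref{invgr} the inversion map $x\mapsto x^{-1}$ is a gauge-reversing bijection of $V_+^\circ$. The substance is sufficiency, and for that I would follow a Koecher--Vinberg type strategy adapted to Thompson's metric.

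\emph{Step 1: a family of symmetries.} Starting from a gauge-reversing bijection $\Phi$, I would use \cite[Lemmas 5.7, 5.9, 5.10]{OCJB} (quoted above) to produce, for every $p\in V_+^\circ$, a unique Thompson symmetry $S_p$ of $(V_+^\circ,d_T)$ with isolated fixed point $p$. By \cite[Theorem 3.5]{LRW25} each $S_p$ is again gauge-reversing. Composites $S_p\circ S_q$ are then gauge-preserving bijections of $V_+^\circ$, that is, order isomorphisms that are homogeneous of degree $1$. Using a Noll--Schäffer type rigidity result, I would show such maps are restrictions of linear order automorphisms of $V$. Because $S_{p}$ maps $q$ to the ``reflection'' of $q$ through $p$, the group $G$ generated by these composites acts transitively on $V_+^\circ$. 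Hence the cone is homogeneous and symmetric in the sense of admitting the symmetries $S_p$.

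\emph{Step 2: defining the product.} By \cite[Thm.~5.20]{OCJB}, $S_v$ is real-analytic. Uniqueness of symmetries forces $DS_v(v)=-\Id$. Motivated by the expansion $(1+tx)^{-1}=1-tx+t^2x^2-\dots$, I would define
$$x\circ y:=\tfrac12 D^2S_v(v)[x,y].$$
This is a bounded symmetric bilinear product with $v\circ x=x$ (use homogeneity of degree $-1$ along the ray through $v$). To get the Jordan identity, I would pass to the Lie algebra $\mathfrak g$ of bounded derivations of the cone, namely the linear maps $X$ with $e^{tX}V_+\subset V_+$. It carries the Cartan involution $\theta(X)=DS_v(v)\,X\,DS_v(v)$ and the decomposition $\mathfrak g=\mathfrak k\oplus\mathfrak p$. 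Transitivity from Step 1 should give that $X\mapsto X(v)$ restricts to an isomorphism $\mathfrak p\cong V$. Write $T_x$ for the element of $\mathfrak p$ with $T_x(v)=x$; I then need to identify $T_x(y)=x\circ y$. The Jordan identity should follow from $[\mathfrak k,\mathfrak p]\subset\mathfrak p$ applied to $[T_x,T_{x^2}]\in\mathfrak k$, exactly as in the finite-dimensional Koecher--Vinberg theorem.

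\emph{Step 3: positivity and norm axioms.} It remains to show that the cone of squares equals $V_+$ and that the JB-norm axioms hold for $\lVert\cdot\rVert_v$. For $x\in V$, the one-parameter group $e^{tT_x}$ lies in $G$, and $e^{tT_x}v$ should equal the Jordan exponential $\exp(tx)$. This gives $V_+^\circ=\{\exp(x)\}$ and therefore $V_+^\circ\subset\{y^2\}$; by closure, $V_+\subset\{y^2\}$. The reverse inclusion uses $U_y=2T_y^2-T_{y^2}$ positive, obtained as a limit of elements of $G$. Finally, $\lVert x^2\rVert=\lVert x\rVert^2$ and $\lVert x^2\rVert\le\lVert x^2+y^2\rVert$ follow from the order-unit description of the norm once squares are positive and the spectral theory of the associative subalgebra generated by $x$ is available, by Alfsen--Shultz's characterization of JB-algebras as order unit spaces where $-v\le x\le v$ implies $0\le x^2\le v$.

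\emph{Main obstacle.} I expect the hardest step to be Step 2. In infinite dimensions there is no a priori Lie group structure on $G$ and no Haar measure or duality, so both the identification $\mathfrak p\cong V$ and the equality $T_x(y)=x\circ y$ must be extracted directly from the analyticity of the symmetries and the Thompson metric. The first thing I would try is to differentiate the identity $S_p=g\circ S_v\circ g^{-1}$, with $g\in G$ and $g(v)=p$, twice at $v$.
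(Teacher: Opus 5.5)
Your necessity direction is correct and is the same remark the paper makes via \Cref{invgr}. The paper does not prove sufficiency itself. It imports \Cref{MRJB} from \cite{OCJB} and records only two ingredients from there: a unique Thompson symmetry at every point, and real-analyticity of gauge-reversing maps. So your outline has to stand on its own, and its load-bearing steps are asserted rather than proved. In Step~1, having a symmetry at every point does not make $G$ transitive. To move $v$ to a given $x$ with $S_pS_v$ you need some $p$ with $S_p(v)=x$ (in the Jordan picture $p=x^{1/2}$), i.e.\ square roots or midpoints, and their existence is part of what has to be shown. The usual remedy is to show orbits are open, because $p\mapsto S_p(y)$ has derivative $2\Id$ at $p=y$, and then use connectedness. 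That needs $C^1$ dependence of $S_p(y)$ on the parameter $p$ plus an inverse function theorem. Analyticity of each individual $S_p$ \cite[Thm.~5.20]{OCJB} gives no control in $p$. Likewise, linearity of the gauge-preserving maps $S_pS_q$ is not available: Noll--Sch\"affer and Rothaus are finite-dimensional theorems, and you give no substitute valid in an arbitrary complete order unit space.

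Step~2 also contains a concrete error. Since $DS_v(v)=-\Id$, your $\theta(X)=DS_v(v)\,X\,DS_v(v)$ equals $X$ for every $X$, so $\mathfrak p=\{0\}$ and the decomposition is trivial. The correct involution is induced by the \emph{nonlinear} conjugation $g\mapsto S_v\circ g\circ S_v$. Its differential sends $X$ to the vector field $y\mapsto DS_v(S_v(y))[X S_v(y)]$; in a JB-algebra with $X=T_a$ this is $y\mapsto -U_y(a\circ y^{-1})=-a\circ y$, so $\theta(T_a)=-T_a$. This vector field lies in $\mathfrak g$ only once you know $S_vgS_v$ is linear, which is the missing rigidity again. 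Even with the right $\theta$, the identification $\mathfrak p\cong V$ via $X\mapsto X(v)$ does not follow from transitivity in infinite dimensions, and without it $T_x$ is not even defined. The reason is that $\operatorname{Aut}(V_+)$ is not known a priori to be a Banach--Lie group with submersive orbit map; you flag this yourself but offer only a plan. Finally, in Step~3 the claim that $U_y$ is a limit of elements of $G$ fails for non-positive $y$. For $V=M_2(\R)_{sa}$ one has $S_pS_q(y)=gyg^{T}$ with $g=pq^{-1}$ and $\det g>0$, and $y\mapsto sys$ with $s=\operatorname{diag}(1,-1)$ is not a limit of such maps. So positivity of all squares needs another argument. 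As it stands, the sufficiency half is a programme, and its missing pieces are exactly the substance of the theorem: homogeneity of $V_+^\circ$, linearity of $S_pS_q$, and the construction of the operators $T_x$.
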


\subsubsection{Positive projections}
In this section, we state for later use two results concerning positive projections on a JB-algebra.\\

Let $F$ be an ordered normed space. Two positive idempotent linear maps $P$ and $Q$ on $F$ are called \emph{complementary} if
\begin{equation}
    \ker_+ P = \im_+ Q\text{ and } \ker_+ Q = \im_+ P.
\end{equation}
Here we set $\ker_+ P := F_+ \cap \ker P$ and $\im_+ P := F_+ \cap \im P = P(F_+)$. The norm dual $F^*$ of $F$ has a natural ordering given by the cone $F^*_+ := \{\phi \in F: \phi(F_+) \subset \R_+\}$. Two contractive complementary projections $P, Q\colon F \to F$ whose duals $P^*, Q^*\colon F^* \to F^*$ are also complementary are called a pair of \emph{bicomplementary projections} on $F$.\\
\ind If $p$ and $q := 1-p$ are complementary projections in a unital JB-algebra $J$, then their quadratic representation maps $U_p, U_q\colon J \to J$ are bicomplementary projections \cite[Thm. 7.12]{AS03}. Moreover, any pair of bicomplementary projections on a unital JB-algebra arises in this way.

\begin{theorem}[Alfsen--Shultz]\label{BicpThm}
Let $J$ be a unital JB-algebra. Let $P, Q\colon J \to J$ be bicomplementary projections. Then $p = P(1)$ and $q = Q(1)$ are complementary projections with $P = U_p$ and $Q = U_q$.
\end{theorem}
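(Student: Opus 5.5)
The natural route is through the dual picture of Alfsen--Shultz: I would identify the face of the state space cut out by $P^*$, recognize it as the face of a projection in the bidual, and then use uniqueness of the compression with prescribed positive image and kernel. In practice this result is \cite[Thm.~7.12 and its converse]{AS03}, and I would follow that argument as follows.

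\textbf{Step 1: pass to the bidual and find $p$.} Pass to the bidual JBW-algebra $J^{**}$. There $P^{**}$ and $Q^{**}$ are normal, positive, contractive, idempotent and still bicomplementary. Let $K$ be the state space of $J$. Put $F := K\cap \im_+ P^*$ and $G := K \cap \im_+Q^*$. Complementarity of $P^*,Q^*$ gives $F = K\cap \ker_+ Q^*$, so $F$ and $G$ are norm-closed faces of $K$. For each such face I would use the JBW-theory correspondence between norm-closed faces of the normal state space of $J^{**}$ and projections in $J^{**}$. This produces a projection $p\in J^{**}$ with
\[
F=\{\omega\in K: \omega(p)=1\}.
\]
It also produces its compression $U_p$ on $J^{**}$, which satisfies $\im_+ U_p^* \cap K = F$ and $\ker_+U_p^*\cap K = \{\omega : \omega(p)=0\}$. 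The last set should be $G$ by complementarity, and it corresponds to $q' := 1-p$.

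\textbf{Step 2: uniqueness, $P^{**}=U_p$ and $Q^{**}=U_{1-p}$.} This is the main obstacle: two bicomplementary pairs with the same positive images and kernels must coincide. I would first try the following.
\begin{itemize}
\item For $\omega\in K$, contractivity of $P^*$ and the fact that $\im P^*$ is spanned by $F$ force $\omega\circ P^{**}$ to be a positive functional supported by $p$. Hence $\omega\circ P^{**} = \omega\circ P^{**}\circ U_p$ and, symmetrically, $\omega\circ U_p = \omega\circ U_p\circ P^{**}$.
\item Evaluating at $1$ gives $\omega(P^{**}1)\le \omega(p)$. Positive elements vanishing on $G$, i.e.\ those in $\im_+P^{**}$, are dominated by multiples of $p$ and fixed by $U_p$, and conversely.
\item Combining these shows that $P^{**}$ and $U_p$ have the same range, namely $U_p(J^{**})$, and the same kernel, namely the annihilator of $F$. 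Two idempotents with equal range and kernel are equal, so $P^{**} = U_p$. The same argument gives $Q^{**}=U_{1-p}$.
\end{itemize}

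\textbf{Step 3: descend to $J$.} Since $P^{**}$ extends $P$, we get $p = U_p(1) = P^{**}(1) = P(1)\in J$. Likewise $Q(1)=1-p\in J$. So $p$ and $q=1-p$ are complementary projections in $J$ itself, and restricting Step 2 to $J$ yields $P=U_p$ and $Q=U_q$.

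The delicate point throughout is Step 2, in particular extracting $P^{**}(1)=p$ from norm considerations. I would aim for the equality $\lVert P^*\omega\rVert + \lVert Q^*\omega\rVert = 1$ for $\omega\in K$. The inequality $\le 1$ should follow from contractivity together with the fact that the positive images are mutually orthogonal faces; the inequality $\ge 1$ should follow from complementarity of $P^*,Q^*$. This equality yields $P(1)+Q(1)=1$ directly.
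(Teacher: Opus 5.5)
\textbf{There is a genuine gap: the crux of the theorem is asserted, not proved.} Your setup is sound through the first bullet of Step~2. $F$ is a weak$^*$-closed face, $F=F_p$ for a projection $p\in J^{**}$, and you correctly get $P^{**}=P^{**}U_p$ and $U_pP^{**}=U_p$.

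Everything after that is justified only by contractivity and complementarity of $P^*,Q^*$, and these cannot deliver it. The complementarity $\ker_+P=\im_+Q$, $\ker_+Q=\im_+P$ on $J$ itself must enter essentially. In your sketch it appears only inside the claim that $P^{**},Q^{**}$ are ``still bicomplementary''. That claim is not formal: you would need $\ker_+P$ to be weak$^*$-dense in $\ker_+P^{**}$.

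To see that the other hypotheses are insufficient, take $J=C[0,1]$ and set $Pf=f(0)g$, $Qf=f(1)h$. Here $g,h$ are continuous with $0\le g,h\le 1$, $g(0)=h(1)=1$, $g^{-1}(0)=\{1\}$ and $h^{-1}(0)=\{0\}$. These are positive contractive idempotents with $P^*\mu=\mu(g)\delta_0$ and $Q^*\mu=\mu(h)\delta_1$. So $P^*,Q^*$ are complementary and $F=\{\delta_0\}$, $G=\{\delta_1\}$ are orthogonal faces, yet $P(1)=g$ is not a projection. In this example:
\begin{itemize}
\item $p=\chi_{\{0\}}$ and $\{\omega\in K:\omega(p)=0\}\neq G$, so ``should be $G$ by complementarity'' cannot follow from the dual data.
\item $P(1)=g\not\le p$, so your second bullet fails. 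In fact, evaluating $U_pP^{**}=U_p$ at $1$ gives $U_p(1-P(1))=0$, i.e.\ $P(1)\ge p$, the \emph{opposite} inequality.
\item The choices $g=\cos(\pi t/2)$, $h=\sin(\pi t/2)$, resp.\ $g=(1-t)^2$, $h=t^2$, violate $\lVert P^*\omega\rVert+\lVert Q^*\omega\rVert\le 1$, resp.\ $\ge 1$, at $\omega=\delta_{1/2}$.
\item The choice $g=1-t$, $h=t$ gives $P(1)+Q(1)=1$ with $P(1)\neq p$, so norm additivity would not finish the argument anyway.
\end{itemize}

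\textbf{The missing idea is to close the gap between the two faces using complementarity on $J$.}
\begin{enumerate}
\item Applying your Step~1 to $Q$ gives $G=F_r$. For $\tau\in G$ you have $\tau(p)\le\tau(P(1))=\lVert P^*\tau\rVert=0$, so $r\le 1-p$. Put $e=1-p-r$.
\item Key observation: if $a\in J_+$ satisfies $U_ea=a$, then $U_pa=U_ra=0$. Hence $Pa=P^{**}U_pa=0$ and likewise $Qa=0$, so $a\in\ker_+P\cap\ker_+Q=\im_+Q\cap\ker_+Q=\{0\}$.
\item Write $P(1)=p+c$ with $c=U_{1-p}P(1)$. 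Since $Q(P(1))=0$ and $\ker Q^{**}=\ker U_r$ (the $Q$-version of your first bullet), $U_rP(1)=0$, so $c=U_ec$. Then $P(1)-P(1)^2=c-c^2\in J_+$ lies under $e$, so it vanishes by step 2, and $c$ is a projection below $e$.
\item Every normal state $\omega$ with $\omega(e)=1$ lies outside $G=K\cap\ker_+P^*$. Hence $\omega(c)=\omega(P(1))>0$, which forces $c=e$.
\item Symmetrically $Q(1)=r+e$. So $e=P(1)+Q(1)-1$ is an element of $J_+$ under $e$, and step 2 gives $e=0$.
\item Only now do you get $P(1)=p\in J$ and $Q(1)=1-p$. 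Finally $P^{**}=U_p$ follows from your first bullet: for $y\in U_p(J^{**})_+$, the bound $0\le P^{**}y\le\lVert y\rVert p$ forces $P^{**}y=U_pP^{**}y=y$.
\end{enumerate}
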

\begin{proof}
Cf. \cite[Thm. 2.83]{AS03} and the remark following its proof.
\end{proof}

The range of a positive unital projection is itself a JB-algebra under the Choi--Effrøs product.

\begin{theorem}[Effros-Størmer]\label{EffSt}
Let $J$ be a unital JB-algebra and suppose that $P\colon J \to J$ is a unital positive idempotent linear map. Then $P(J)$ is a unital JB-algebra under the product $(r, s) \mapsto P(r \circ s)$.
\end{theorem}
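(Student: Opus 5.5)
The plan is to verify directly that $P(J)$, with the product $r\cdot s := P(r\circ s)$, satisfies the axioms of a unital JB-algebra, taking as norm the restriction of the norm of $J$. First, the preliminaries. Since $P$ is positive it is bounded, so $P(J)=\ker(\Id-P)$ is closed and hence complete. The product $\cdot$ is bilinear and commutative, and since $P(1)=1$ we get $1\cdot r=P(r)=r$ for $r\in P(J)$. What remains is the Jordan identity and the three norm axioms.

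The key tool is the Jordan version of the Kadison--Schwarz inequality for the unital positive map $P$:
$$P(a^2)\ge P(a)^2\qquad (a\in J).$$
I take this as known for unital positive maps between JB-algebras (it reduces, via the Shirshov--Cohn theorem, to the associative case on the JC-algebra generated by $a$ and $1$). From it I would derive the bimodule identity
$$P\bigl(P(a)\circ b\bigr)=P\bigl(P(a)\circ P(b)\bigr)\qquad (a,b\in J).$$
To prove it, fix $a,b$, put $c:=b-P(b)$ (so $P(c)=0$), and apply the Schwarz inequality to $P(a)+tc$ for $t\in\R$. This gives
$$P(P(a)^2)+2t\,P(P(a)\circ c)+t^2P(c^2)\ \ge\ P(a)^2 .$$
Applying the positive map $P$ to both sides and using $P^2=P$ yields $2t\,X+t^2Y\ge 0$ for all real $t$, where $X:=P(P(a)\circ c)$ and $Y:=P(c^2)$. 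Evaluating at any state, dividing by $t$ and letting $t\to0^{\pm}$ forces $\omega(X)=0$ for every state $\omega$. Since the states separate points of the order unit space, $X=0$.

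Next I would prove the Jordan identity for $\cdot$, which I expect to be the main obstacle. Using the bimodule identity, every iterated product $r\cdot(s\cdot t)=P\bigl(r\circ P(s\circ t)\bigr)$ can be rewritten as $P\bigl(r\circ(s\circ t)\bigr)$ whenever the outer factor lies in $P(J)$. Hence, for $r,s\in P(J)$, I expect $r\cdot(s\cdot(r\cdot r))$ and $(r\cdot s)\cdot(r\cdot r)$ to reduce to $P$ applied to $r\circ(s\circ r^2)$ and $(r\circ s)\circ r^2$ respectively, which agree by the Jordan identity in $J$. The delicate point is the inner square $r\cdot r=P(r^2)$, which differs from $r^2$. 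I would handle this by applying the bimodule identity twice, in its symmetric form $P(x\circ P(y))=P(P(x)\circ P(y))$, to move $P$ off $r^2$ inside expressions of the form $P(u\circ P(r^2))$. If this bookkeeping does not close up, I would fall back on the linearized Jordan identity and verify it term by term.

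Finally, the norm axioms. Writing $\|\cdot\|$ for the norm of $J$:
\begin{enumerate}
\item Submultiplicativity: $\|r\cdot s\|\le\|P\|\,\|r\circ s\|\le\|r\|\,\|s\|$, since $\|P\|=\|P(1)\|=1$.
\item The C*-type identity: the Schwarz inequality gives $0\le r^2\le P(r^2)=r\cdot r\le\|r\|^2\,1$, because $r^2\le \|r\|^2 1$ and $P$ is positive and unital. Monotonicity of the order unit norm then yields $\|r\cdot r\|=\|r\|^2$.
\item For the last axiom, $r\cdot r$ and $s\cdot s$ are positive, so $0\le r\cdot r\le r\cdot r+s\cdot s$, and monotonicity of the norm on $J_+$ gives $\|r\cdot r\|\le\|r\cdot r+s\cdot s\|$.
\end{enumerate}
Together these show that $P(J)$ is a unital JB-algebra under $(r,s)\mapsto P(r\circ s)$.
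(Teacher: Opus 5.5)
\textbf{Gap: the Jordan identity.} The Jordan identity is the real content of the theorem, and the bookkeeping you propose cannot close. The bimodule identity, in either form, only lets you delete a $P$ inside a product whose \emph{other} factor lies in $P(J)$.

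That handles one side: $r\cdot(s\cdot(r\cdot r))=P\bigl(r\circ P(s\circ P(r^2))\bigr)=P\bigl(r\circ(s\circ r^2)\bigr)$. The other side is
\[
(r\cdot s)\cdot(r\cdot r)=P\bigl(P(r\circ s)\circ P(r^2)\bigr).
\]
Here each application strips one $P$ and leaves the other. You get $P\bigl(P(r\circ s)\circ r^2\bigr)$ or $P\bigl((r\circ s)\circ P(r^2)\bigr)$, but never $P\bigl((r\circ s)\circ r^2\bigr)$, because $r^2$ and $r\circ s$ need not lie in $P(J)$.

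After invoking the Jordan identity of $J$, what you must show is $P\bigl((r\circ s)\circ(P(r^2)-r^2)\bigr)=0$. The bimodule identity disposes of the component $P(r\circ s)$ of $r\circ s$. What remains is $P$ of a product of two elements of $\ker P$, namely $r\circ s-P(r\circ s)$ and $P(r^2)-r^2$, and the bimodule identity says nothing about such products. Passing to the linearized Jordan identity does not help, since it contains terms $(x\cdot y)\cdot(w\cdot z)$ with the same two nested $P$'s.

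\textbf{The missing ingredient} is a second use of positivity. If $k\ge 0$ and $P(k)=0$, then $P(x\circ k)=0$ for every $x\in J$. To see this, take a state $\omega$ of $J$ and put $\rho:=\omega\circ P$, a state with $\rho(k)=0$. Since $0\le k^2\le\lVert k\rVert k$, also $\rho(k^2)=0$. Cauchy--Schwarz, $|\rho(x\circ k)|^2\le\rho(x^2)\rho(k^2)$, then gives $\omega(P(x\circ k))=0$.

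For $r\in P(J)$, the element $k_r:=P(r^2)-r^2$ is positive by Kadison--Schwarz and lies in $\ker P$. Since $k_{r+s}-k_r-k_s=2\bigl(P(r\circ s)-r\circ s\bigr)$, you obtain
\[
P\bigl(x\circ P(r\circ s)\bigr)=P\bigl(x\circ(r\circ s)\bigr)\qquad (x\in J,\ r,s\in P(J)).
\]
Now $(r\cdot s)\cdot(r\cdot r)=P\bigl(P(r\circ s)\circ r^2\bigr)=P\bigl((r\circ s)\circ r^2\bigr)$, and the Jordan identity in $J$ finishes the argument. Your bimodule identity and your norm verifications are fine as written. (The paper itself gives no proof of this theorem and simply cites Effros--Størmer.)

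\textbf{Kadison--Schwarz justification.} Your parenthetical justification of $P(a^2)\ge P(a)^2$ via Shirshov--Cohn is not valid as stated. $J$ need not be special, and the hypothesis concerns the elements $P(f(a))$, which need not lie in the special subalgebra generated by $1$, $P(a)$, $P(a^2)$. The inequality is nonetheless true. Take a fine continuous partition of unity $(f_i)$ on $\sigma(a)$ with points $\lambda_i$, and put $b_i=P(f_i(a))\ge 0$ and $x=\sum_i\lambda_ib_i$. Then $\sum_i\lambda_i^2b_i-x^2=\sum_iU_{\lambda_i 1-x}(b_i)\ge 0$, and passing to the limit gives the inequality.
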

\begin{proof}
See \cite[Thm. 1.4]{EffSto79}
\end{proof}

\subsubsection{Coordinatization theorems}
As follows from \Cref{JCExp}, if $D$ is a real or complex C*-algebra and $n\ge 1$ is an integer, then $M_n(D)_{sa}$ is a JB-algebra. In this section, we discuss two \emph{coordinatization theorems} for JB-algebras, which give a sufficient condition in order that a JB-algebra is isomorphic to $M_n(D)$ for some real or complex *-algebra $D$. To show that the coordinate *-algebra $D$ is in fact a real or complex C*-algebra, we require a preliminary result.\\
\ind Alfsen--Schultz proved in \cite[Thm. 1.96]{AS01} that if $A$ is a complex unital *-algebra $A$ whose self-adjoint part $J := A_{sa}$ is a JB-algebra for some norm $\lVert\cdot\rVert_J$, then $A$ is a C*-algebra for a unique norm extending $\lVert\cdot\rVert_J$. For a real *-algebra $A$, the analogous result is not true, cf.\ \Cref{PthEg}.
However, if we make the additional assumption that the involution on $A$ is \emph{proper}, i.e.\ $x^*x=0$ implies $x=0$ for $x\in A$,  we can show that $A$ becomes a real pre-C*-algebra for a unique, not necessarily complete, norm extending $\lVert\cdot\rVert_J$.


\begin{theorem}\label{norm}
Let $A$ be an associative real (resp.\ complex) $*$-algebra such that $J := A_{sa}$ with the induced Jordan product is a JB-algebra for some norm $\lVert\cdot\rVert_J$. In the real case, assume that the involution is proper. Then setting $\lVert x\rVert := \lVert x^*x \rVert_J^{1/2}$ defines a norm on $A$ which extends $\lVert \cdot \rVert_J$ and makes $A$ into a real pre-C*-algebra (resp.\ a complex C*-algebra).
\end{theorem}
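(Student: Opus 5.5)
The complex case is exactly \cite[Thm. 1.96]{AS01}, quoted just before the statement, so only the real case needs an argument. Let $A$ be a real associative $*$-algebra with proper involution such that $J = A_{sa}$ is a JB-algebra. The plan is to pass to the complexification, where the complex result applies, and then restrict back to $A$.

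\textbf{Step 1: complexify.} Put $A_\C := A \otimes_\R \C$ with the involution $(a+ib)^* = a^* - ib^*$. Its self-adjoint part is
\[
(A_\C)_{sa} = J \oplus i K, \qquad K := \{x\in A: x^* = -x\},
\]
with the Jordan product inherited from $A_\C$. This is generally not a JB-algebra a priori, so \cite[Thm. 1.96]{AS01} cannot be invoked directly. I therefore expect to argue more intrinsically, along the following lines.

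\textbf{Step 2: positivity of $x^*x$.} For $x\in A$ we want $x^*x \in J_+$. I would prove this as in the classical C*-argument: first show that $\sigma_J(x^*x) \subset [0,\infty)$. The key identity is that $xy$ and $yx$ have the same nonzero spectrum. Jordan invertibility in the special Jordan algebra $J\subset A$ needs to be related to associative invertibility in $A$. Since inverses in $J$ of elements of $A_{sa}$ lie in the associative subalgebra generated by the element (by the continuous functional calculus in the associative JB-subalgebra $C(a)$), invertibility of $a$ in $J$ gives an inverse in $A$. Conversely, if $a\in J$ has an inverse $b$ in $A$, then $b^*$ is also an inverse, so $b$ is self-adjoint and commutes with $a$; hence $b\in J$ is a Jordan inverse.

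Then the standard trick applies. Write $x^*x = u - v$ with $u,v \in J_+$ and $uv=0$ (functional calculus in $C(x^*x)$). Set $y = xv^{1/2}$. Then $-y^*y = v^{1/2}x^*xv^{1/2} = -v^2 \le 0$. Writing $y = h + k$ with $h \in J$ and $k \in K$, we get
\[
yy^* + y^*y = 2(h^2 - k^2) = 2h^2 + 2k^*k .
\]
This is the step needing care: showing that $k^*k = -k^2 \in J_+$ for skew $k$. It does not follow from the JB axioms alone, and properness is presumably what enters here, in the spirit of \Cref{PthEg}. Alternatively I would use that $-k^2 = k^*k$ and apply the spectral argument with $\sigma(yy^*)\setminus\{0\} = \sigma(y^*y)\setminus\{0\}$. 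This yields $yy^* \le 0$ plus an extra term, so the argument is not purely formal. Properness then concludes: once $\|y^*y\|=0$ forces $y^*y = 0$, we get $y = 0$, hence $v^2=0$ and $v=0$. I expect this positivity step to be the main obstacle, and I would first try to establish $J_+ + \{k^*k\} \subset J_+$ via the spectral permanence above.

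\textbf{Step 3: norm axioms.} Granting $x^*x\in J_+$ for all $x$, define $\|x\| = \|x^*x\|_J^{1/2}$. Submultiplicativity follows from
\[
(xy)^*(xy) = y^*(x^*x)y \le \|x\|^2 y^*y .
\]
Here I use that $a\mapsto y^*ay$ is positive on $J$, because $y^*a y = (a^{1/2}y)^*(a^{1/2}y)$. Monotonicity of the JB norm then gives $\|xy\|\le\|x\|\|y\|$. The C*-identity $\|x^*x\| = \|x\|^2$ and extension of $\|\cdot\|_J$ follow from $\|a^2\|_J = \|a\|_J^2$. The triangle inequality follows from the C*-identity and submultiplicativity via the $2\times2$ matrix trick, or via the seminorm $\sup_\omega \omega(x^*x)^{1/2}$ over states $\omega$ of $J$ composed with $x\mapsto x^*x$, which is a Hilbert-seminorm supremum. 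Definiteness is properness. Axiom (iii) is immediate from $a^*a \le a^*a + b^*b$ in $J_+$.
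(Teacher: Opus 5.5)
There is a genuine gap, and it is the step you flag yourself. In the real case you never prove $x^*x\in J_+$, and all of your Step 3 depends on it.

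Your suggested route, proving $k^*k\in J_+$ for skew $k$ ``via the spectral permanence above'', cannot work as stated. The facts $\sigma_J(a)=\sigma_A(a)$ for $a\in J$ and $\sigma_A(yz)\setminus\{0\}=\sigma_A(zy)\setminus\{0\}$ hold verbatim in the non-proper example of \Cref{PthEg}(1) (take $X$ compact). There the skew element $k=(a,-a)$ satisfies $k^*k=-(a^2,a^2)\notin J_+$. So properness must enter through a non-spectral mechanism. Your closing use of it (``once $y^*y=0$ we get $y=0$'') presupposes the very positivity being proved. The reduction ``$k^*k\in J_+$ for all skew $k$ $\Rightarrow$ theorem'' is logically fine, but that claim is no easier than the theorem. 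There is also a sign slip: $y^*y=v^{1/2}x^*xv^{1/2}=-v^2$, not $-y^*y$.

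The missing idea, which is what the paper does, has two parts.
\begin{enumerate}
\item Do not try to show $k^*k\ge 0$. From $-y^*y\in J_+$ and the spectral identity you get $-yy^*\in J_+$. Writing $y=h+k$ with $h\in J$ and $k$ skew, you then obtain $k^2=h^2-\tfrac12(y^*y+yy^*)\in J_+$ for free.
\item Show that a skew $k$ with $k^2\in J_+$ vanishes. Since $k^*=-k$, the map $z\mapsto kz-zk$ sends $J$ into $J$ and is a Jordan derivation of $J$, hence continuous by \Cref{Dct}. Its kernel is therefore a closed Jordan subalgebra containing $k^2$. So it contains the square root $w$ of $k^2$ taken in the JB-subalgebra generated by $k^2$. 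Continuity is essential here: $w$ is only a norm limit of polynomials in $k^2$, and there is no norm on $A$ yet to carry commutation with $k$ to the limit. Now $(w+k)^*(w+k)=(w-k)(w+k)=w^2-k^2=0$, so properness gives $w+k=0$, hence $k=0$. Then $y=h$ with $h^2=y^*y\in J_+\cap -J_+=\{0\}$, so $h=0$ and consequently $v=0$.
\end{enumerate}

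Two smaller points. First, your argument that a Jordan inverse $b$ of $a\in J$ is an associative inverse has the same limit problem. Use instead $U_a(b^2)=ab^2a=1$ in $A$, which gives $a$ a left and a right inverse. Second, the statement does not assume $A$ unital, whereas \cite[Thm. 1.96]{AS01} and your spectral and state arguments use a unit. You therefore need a unitization step, checking that properness and the JB-structure pass to $A\oplus\K 1$.
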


The various steps in the proof are essentially adaptions from the arguments given in \cite[Lemma 1.92, Thm. 1.96, Prop. 1.100 and 1.103]{AS01} and \cite[Lemma 4.3]{AS03}, barring the real case of step 3. In this step we invoke the automatic continuity of Jordan derivations from a JB-algebra into itself. Recall that a \emph{Jordan derivation} of a JB-algebra $J$ into itself is a map $D\colon J \to J$ such that $D(a \circ b) = D(a) \circ b + a \circ D(b)$ for all $a,b\in J$. 

\begin{lemma}\label{Dct} Let $J$ be a JB-algebra. Each Jordan derivation $D\colon J \to J$ is continuous.
\end{lemma}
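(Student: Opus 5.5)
The plan is to argue by the closed graph theorem, via the separating space. First reduce to the unital case. If $J$ is nonunital, extend $D$ to the unitization $\tilde J = J\oplus\R 1$ by $\tilde D(a+\lambda 1) := D(a)$. This is again a Jordan derivation: with the product recalled before \Cref{invgr}, both sides of the Leibniz rule equal $D(a\circ b)+\lambda D(b)+\mu D(a)$. Continuity of $\tilde D$ then gives continuity of $D$, since $J$ sits isometrically in $\tilde J$. So assume $J$ is unital; then $D(1)=D(1\circ 1)=2D(1)$ forces $D(1)=0$.

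Consider the separating space
$$S(D) := \{y\in J : \exists\, x_k\to 0 \text{ with } D(x_k)\to y\}.$$
By the closed graph theorem, $D$ is continuous if and only if $S(D)=0$, and $S(D)$ is a closed linear subspace.

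The first structural step is that $S(D)$ is a closed Jordan ideal. Let $x_k\to 0$ with $D(x_k)\to y$, and let $a\in J$. Then $a\circ x_k\to 0$, and
$$D(a\circ x_k)=D(a)\circ x_k + a\circ D(x_k)\to a\circ y$$
by continuity of the product. Hence $a\circ y\in S(D)$. Consequently $S(D)$ is invariant under all $T_a$, hence under all $U_a$ and triple products, so it is itself a JB-algebra. The quotient $J/S(D)$ is again a JB-algebra, since quotients of JB-algebras by closed ideals are JB-algebras. A standard argument shows that the composition $\pi\circ D$, where $\pi\colon J\to J/S(D)$ is the quotient map, is continuous.

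The main obstacle is proving $S(D)=0$. I would run a gliding hump argument in the style of Johnson--Sinclair and Ringrose, using the order and the functional calculus of the JB-algebra. Suppose $S(D)\neq 0$. Since $S(D)$ is a closed ideal, hence a JB-algebra, it contains a nonzero positive element $y$ (take $y=z^2$ for any $0\neq z\in S(D)$). Using continuous functional calculus in the associative JB-subalgebra generated by $y$ and $1$, produce a sequence of nonzero, mutually orthogonal positive elements $a_k\in S(D)$ (so $a_j\circ a_k=0$ for $j\neq k$). Such elements operator commute and satisfy $U_{a_j}U_{a_k}=0$.

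One then shows that the maps $x\mapsto U_{a_k}D(U_{a_k}x)$, or a suitable variant, have uniformly bounded norms. This would follow from a main boundedness theorem, obtained by contradiction from summing $\sum_k 2^{-k} x_k/\lVert x_k\rVert$ with $x_k$ chosen nearly norming. On the other hand, each $a_k$ lies in the separating space, which should force these norms to blow up. Controlling the cross terms in the Jordan setting is where the care is needed, and I would handle them using the orthogonality $U_{a_j}U_{a_k}=0$ together with the identity
$$D(U_a b)=\{D a, b, a\}+\{a, D b, a\}+\{a,b,D a\}.$$

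If this direct route becomes too technical, the fallback is to cite the known automatic continuity theorem for Jordan derivations of JB-algebras (Upmeier; see also Villena's work on automatic continuity in the Jordan setting) and record the statement as a direct consequence.
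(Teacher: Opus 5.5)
The paper does not prove this lemma itself. It cites Upmeier's remark that Sakai's proof for C*-algebras adapts, which is exactly your fallback.

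Your self-contained route has a genuine gap. It breaks at the step where functional calculus on a nonzero positive $y\in S(D)$ is supposed to produce infinitely many mutually orthogonal nonzero positive $a_k\in S(D)$. This works only if $\sigma(y)$ is infinite, and nothing guarantees that $S(D)$ contains such an element. A finite-dimensional $S(D)$ is already left untreated. In the Jordan setting, infinite-dimensionality does not rescue you either. In an infinite-dimensional spin factor $J=\R 1\oplus H$, every element $\alpha 1+a$ has spectrum $\{\alpha\pm\lVert a\rVert\}$, and $J$ contains at most two mutually orthogonal nonzero positive elements. So whatever $S(D)$ is, your gliding hump has nothing to run on. In that example the Jordan derivations are exactly the maps $\alpha 1+a\mapsto Ta$ with $T$ an everywhere defined skew-symmetric operator on $H$; their continuity comes from Hellinger--Toeplitz, not from a gliding hump.

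The cross terms you flagged are, by contrast, harmless when an orthogonal sequence with $\lVert a_m\rVert\le 1$ does exist. Put $x=\sum_n U_{a_n}x_n$. Since $U_{a_m}U_{a_n}=0$ for $n\neq m$, you get $U_{a_m}x=U_{a_m^2}x_m$. Your formula for $D(U_ab)$ then gives $\lVert D(U_{a_m^2}x_m)\rVert\le 6\lVert Da_m\rVert\lVert x\rVert+\lVert Dx\rVert$. On the other hand, $z\mapsto D(U_{a_m^2}z)$ differs from $z\mapsto U_{a_m^2}D(z)$ by a bounded map, and $0\neq a_m^5=U_{a_m^2}a_m\in U_{a_m^2}S(D)$, so it is unbounded. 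Choosing $\lVert x_m\rVert\le 2^{-m}$ with $\lVert D(U_{a_m^2}x_m)\rVert\ge m+6\lVert Da_m\rVert$ then forces $\lVert Dx\rVert\ge m$ for all $m$. So what is actually missing is a treatment of ideals all of whose elements have finite spectrum.

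A shorter argument avoids orthogonality entirely and covers all cases at once. Work in the unitization, as you do, so $D(1)=0$.
\begin{itemize}
\item If $h\ge 0$ and $\rho$ is a state with $\rho(h)=0$, then $D(h)=2h^{1/2}\circ D(h^{1/2})$. The Cauchy--Schwarz inequality $|\rho(a\circ b)|^2\le\rho(a^2)\rho(b^2)$ then gives $\rho(D(h))=0$.
\item For arbitrary $h$, choose $\varepsilon\in\{1,-1\}$ with $\varepsilon\lVert h\rVert\in\sigma(h)$ and a state $\rho$ with $\rho(h)=\varepsilon\lVert h\rVert$. Applying the previous point to $\lVert h\rVert 1-\varepsilon h\ge 0$ yields $\rho(D(h))=0$. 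Hence $\lVert h-tD(h)\rVert\ge|\rho(h-tD(h))|=\lVert h\rVert$ for all $t\in\R$.
\item Now suppose $x_k\to 0$ and $D(x_k)\to y$. Apply the inequality with $t>0$ to $h=tw+x_k$, let $k\to\infty$ and divide by $t$. This gives $\lVert w-tD(w)-y\rVert\ge\lVert w\rVert$ for every $w\in J$.
\item Letting $t\to 0$ and taking $w=y$ gives $y=0$. So $S(D)=0$, and $D$ is continuous by the closed graph theorem.
\end{itemize}
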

\begin{proof}
As has been observed in \cite[Sec. 1.1]{UpmDer}, the proof that associative derivations on C*-algebras are norm continuous given in \cite[Lemma 4.1.3]{SakCW} may be modified to prove the lemma.
\end{proof}

\begin{proof}[Proof~of~\Cref{norm}.]
\textbf{Step 1.} If $A$ is not unital, then we adjoin a unit element $1$ to $A$, to obtain an associative real *-algebra $\tilde{A} = A \oplus \R 1$ with identity element $1$. Moreover, the unitization of the JB-algebra $J$ is a JB-algebra $\tilde{J} = J \oplus \R 1$. Since we have a Jordan isomorphism $\tilde{A}_{sa} \cong A_{sa} \oplus \R1 = J$, we are reduced to the the unital case.\\ 
\textbf{Step 2.} For $x\in J$ we have $\sigma_J(x) = \sigma_{A}(x)$.\\
It suffices to prove that $x$ is invertible in $A$ if and only if $x$ has a Jordan inverse in $J$. Suppose that $y \in A$ satisfies $xy=1=yx$. Applying the involution yields $y^*x=1=xy^*$. Hence $y^*$ is also an associative inverse to $x$; by uniqueness $y=y^*$. Since $x$ and $y$ associate in $J$ and $x\circ y = \frac{1}{2}(xy+yx)=1$, we conclude that $y$ is the Jordan inverse of $x$ in $J$.\\
\ind Conversely, assume that $y\in J$ is a Jordan-inverse of $x$. Then $x$ and $y$ associate in $J$, hence commute in $A$. This gives that $xy=yx = x\circ y = 1$, proving that $y$ is invertible in $A$. \\
\textbf{Step 3.} If $x\in A$ satisfies $-x^*x\in J_+$, then $x=0$.\\
According to a general fact about unital associative algebras, if $a,b\in A$, then $1-ab$ is invertible if and only if $1-ba$ is invertible. This fact entails that $\sigma_{A}(-x^*x)\setminus \{0\} = \sigma_{A}(-xx^*)\setminus \{0\}$. The previous claim yields $\sigma_J(-x^*x)\setminus \{0\}=\sigma_J(-xx^*)\setminus \{0\}$. Since the positive cone in the JB-algebra $J$ is described as $J_+ = \{a \in A \colon \sigma_J(a) \subset \R_+\}$, it follows that $-xx^* \in J_+$ and thus $x^*x+xx^* \in -A_+$.\\
\textit{Complex case.} Write $x = a + ib$ with $a,b \in J$. We find $$x^*x + xx^* = (a-ib)(a+ib)+(a+ib)(a-ib)= 2a^2 + 2b^2 \in J_+ \cap -J_+ = \{0\}.$$ Since $J$ is formally real, $a = b =0 $ and $x = 0$ follow.\\
\textit{Real case.} Put $a = \frac{1}{2}(x+x^*)$ and $b=\frac{1}{2}(x-x^*)$, so that $x = a + b$ with $a = a^*$ and $b = -b^*$. Then we have $x^*x + xx^* = (a-b)(a+b) + (a+b)(a-b) = 2a^2 - 2b^2$. Since $a\in J$, we have $a^2 \ge 0$. This shows that $b^2 = a^2 - \frac{1}{2}(x^*x + xx^*) \in J_+$.
Hence there exists a square root $u \in \JB(B^2)$ of $b^2$ in the JB-subalgebra $\JB(b^2)$ of $J$ generated by $b^2$. We contend that $u$ and $b$ commute in $A$.\\
\ind Consider the real linear map $D_A \colon A \to A$ given by $D_A(y) := [b,y] := by - yb$. Then $D_A(J)\subset J$ holds since for every $y \in J$ we have $[b,y]^*=[y^*,b^*]=[y,-b]=[b,y]$. Let $D_J\colon J \to J$ be the restriction of $D_A$ to $J$. Since $D_A$ is an associative derivation on $A$, we have that $D_J$ is a Jordan derivation from $J$ into itself. It follows that $\ker D_J$ is a Jordan subalgebra of $J$. Moreover, $D_J$ is continuous by \Cref{Dct}, so $\ker D_J \subset J$ is closed, hence a JB-subalgebra of $D$. Since $b^2 \in \ker D_J$ and $\JB(b^2)$ is the smallest JB-subalgebra of $J$ containing $b^2$, it follows that $\JB(b^2) \subset \ker D_J$. Since $u \in \JB(b^2)$, we find that $bu-ub=D_J(u)=0$. Therefore $u$ and $b$ commute in $A$, as contended.\\
\ind Now let $y = u + b$. Then we have that $y^*y = (u - b)(u + b)=u^2 - b^2 = 0$. It follows that $y = 0$, hence $b = \frac{1}{2}(y-y^*)=0$ and $x = a \in J$. Since $J$ is formally real, $x\in J$ and $x^2 = x^*x \in -J_+$ imply $x = 0$, proving the claim.\\
\textbf{Step 4.} If $x\in A$ then $x^*x \in J_+$.\\
Let $x^*x = a^+ - a^-$ be the Jordan decomposition of $x^*x \in J$, where $a^+,a^-\in J$ are orthogonal. Let $y = xa^-$. Then $-y^*y = -a^-(x^*x)a^- = -a^-(a^+-a^-)a^-=(a^-)^3 \in A_+$. By the previous claim, we have $y = 0$. Now $a^- \in J$ cubes to $0$, hence $x^*x = a^+ - a^- = a^+ \in J_+$.\\
\textbf{Step 5.} $\lVert\cdot\rVert$ is a norm on $A$ extending $\lVert\cdot\rVert_J$.\\
Let $x,y\in A$ and $\lambda \in \K$. For $x\in J$ we have $\lVert x\rVert := \lVert x^*x\rVert_J^{1/2} = \lVert x^2\rVert_J^{1/2} = \lVert x\rVert_J$ since $\lVert\cdot\rVert_J$ is a JB-norm. Therefore, the norm $\lVert \cdot \rVert$ extends $\lVert\cdot\rVert_J$.  Also,  $(\lambda x)^*=\overline{\lambda}x^*$
yields $\lVert \lambda x\rVert = |\lambda|\lVert x\rVert$. 
To prove the triangle inequality, consider a state $\rho$ on $J$, i.e.\ a positive linear functional with $\rho(1) = 1$. It follows from step 4 that a semi-inner product on $J$ is defined by $(z|w) := \rho(z^*w)$. It follows from the Cauchy--Schwartz inequality that $|\rho(x^*y)|\le \rho(x^*x)^{1/2}\rho(y^*y)^{1/2}$. This yields
\begin{equation*}
\rho((x+y)^*(x+y))\le \rho(x^*x)+\rho(y^*y) + 2\rho(x^*x)^{1/2}\rho(y^*y)^{1/2}=(\rho(x^*x)^{1/2}+\rho(y^*y)^{1/2})^2 \le (\lVert x\rVert + \lVert y\rVert)^2.
\end{equation*}
Taking the supremum over all states $\rho$ gives, since $(x+y)^*(x+y)\in J_+$ that
$$\lVert x+y\rVert^2 = \lVert x^*x+y^*y\rVert_J =  \sup_{\rho \in J^*_+, \rho(1) = 1} \rho((x+y)^*(x+y)) \le (\lVert x\rVert + \lVert y\rVert)^2.$$
\textbf{Step 6.} $\lVert\cdot\rVert$ satisfies axioms (i), (ii) and (iii) of a pre-C*-algebra.\\
Let $x,y \in A$. Since $\lVert\cdot\rVert$ extends $\lVert\cdot\rVert_J$ by the previous step, we have $\lVert x^*x\rVert = \lVert x^*x\rVert_J := \lVert x\rVert^2$, i.e.\ axiom (ii) holds.
Note that the map $R_y \colon J \to J$, $x \mapsto y^*xy$ is positive. Indeed, if $a \in J_+$ then $a = u^2$ for some $u\in J$, hence $R_y(a) = y^*u^2y = (uy)^*(uy) \in J_+$ by the last claim. Since $R_y$ is a positive operator, its operator norm is given by $\lVert R_y\rVert = \lVert R_y(1)\rVert_J = \lVert y^*y\rVert_J$. It follows that
$$\lVert xy\rVert^2 = \lVert y^*x^*xy^*\rVert_J = \lVert R_y(x^*x)\rVert_J \le \lVert R_y\rVert \lVert x^*x\rVert_J = \lVert x^*x\rVert_J \lVert y^*y\rVert_J = (\lVert x\rVert \lVert y\rVert)^2,$$
proving (i). Since $x^*x, y^*y \in J_+$ by step 5 and the JB-norm $\lVert\cdot\rVert_J$ is monotone on $J_+$, we obtain
\begin{equation*}
\lVert x^*x\rVert = \lVert x^*x\rVert_J \le \lVert x^*x + y^*y\rVert_J = \lVert x^*x + y^*y\rVert.
\end{equation*}
Thus axiom (iii) holds. We conclude that $\lVert\cdot\rVert$ makes $A$ into a pre-C*-algebra.\\
\textbf{Step 7.} In the complex case, $A$ is complete.\\
Let $x = a + ib \in A$ with $a,b\in J$. Since $a = \frac{1}{2}(x+x^*)$ and $b = \frac{1}{2i}(x-x^*)$ we have $$\max\{\lVert a\rVert_J, \lVert b\rVert_J\} \le \lVert a+ib\rVert \le \lVert a\rVert_J + \lVert b\rVert_J.$$
From these inequalities it is clear that $A$ is complete whenever $J$ is complete. 
\end{proof}

\begin{remark}\label{PthEg} (1) In contrast with the complex case, it is necessary to assume that $x^*x = 0$ implies $x = 0$, as the following example shows. Let $J = C_0(X)$ be an associative JB-algebra and let $A = J \times J$ be the real *-algebra having coordinatewise product and involution $(a,b)^* = (b, a)$. The diagonal embedding yields a Jordan isomorphism $J \stackrel{\sim}{\to} A_{sa} = \{(a,a)\colon a \in J\}$, so $A_{sa}$ admits a JB-norm. However, for $x := (a, 0)$ with $a\in J \setminus \{0\}$ we have $x^*x=(0, a)(a,0)=(0,0)$, which implies that every norm on $A$ violates axiom (ii) of a C*-algebra.\\
(2) In the real case, it need not hold that $A$ is complete. For example, let $I$ be the non-closed ideal of $C([0,1]; \R)$ consisting of those functions which vanish in a neighborhood of $0$. Consider the non-closed real *-subalgebra of $C([0,1];\C)$ given by
$$A := \{f \in C([0,1]; \C) : \Im f \in I\}.$$
Then $A$ is not complete, despite the completeness of its self-adjoint part $A_{sa} = C([0,1];\R)$.
\end{remark}

We now state a coordinatization theorem due to Hanche-Olsen, characterizing those JB-algebras which are the self-adjoint part of a complex C*-algebra.

\begin{theorem}[Hanche-Olsen]\label{HO}
Let $J$ be a unital JB-algebra and suppose $(J \otimes_\R M_2(\C))_{sa}$ is endowed with a Jordan product $\circ$ satisfying for all $x,y \in J$ and $a,b \in M_2(\C)_{sa}$:
\begin{align}
(x \otimes I_2) \circ (y \otimes I_2) &= (x \circ y) \otimes I_2, \label{HO1}\\
(1 \otimes a) \circ (1 \otimes b) &= 1 \otimes (a \circ b), \label{HO2}\\
(1 \otimes a) \circ (x \otimes I_2) &= x \otimes a, \label{HO3}\\
[L(1 \otimes a), L(x\otimes 1)] &= 0. \label{HO4}
\end{align}
Then there exists a unique associative product in $A := J \otimes_\R \C$ inducing the given Jordan products both in $J$ and in $(J \otimes_\R M_2(\C))_{sa}$. Moreover, $A$ with involution $(x \otimes \lambda)^* = x \otimes \overline{\lambda}$ is a C*-algebra for a norm extending $\lVert\cdot\rVert$ such that $A_{sa} = J$.
\end{theorem}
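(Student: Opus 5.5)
The plan is to realize $A = J\otimes_\R\C$ as an off-diagonal Peirce space of the Jordan algebra $M := (J\otimes_\R M_2(\C))_{sa}$, and to read off an associative product from Jordan triple products, in imitation of the identity $\bigl(\begin{smallmatrix}0&a\\ a^*&0\end{smallmatrix}\bigr)$-calculus in $M_2$ of a C*-algebra.

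\textbf{Step 1: matrix units and the Peirce decomposition.} Let $e_{ij}$ be the matrix units of $M_2(\C)$ and put $p := 1\otimes e_{11}$, $q := 1\otimes e_{22}$. By \eqref{HO2} these are complementary projections in $M$. Consider the Peirce decomposition $M = M_1(p)\oplus M_{1/2}(p)\oplus M_0(p)$. Using \eqref{HO3} and \eqref{HO4} (operator commutation of $1\otimes a$ with $x\otimes I_2$), one computes:
\begin{itemize}
\item $M_1(p) = \{x\otimes e_{11}\}$, which is Jordan isomorphic to $J$ by \eqref{HO1};
\item $M_0(p)$ is similarly a copy of $J$;
\item $M_{1/2}(p)$ consists of the elements $x\otimes e_{12}+x\otimes e_{21}$ and $y\otimes(ie_{12}-ie_{21})$.
\end{itemize}
This gives a real-linear bijection
\[
\iota\colon A\to M_{1/2}(p),\qquad x\otimes\lambda\mapsto x\otimes(\lambda e_{12}+\bar\lambda e_{21}),
\]
which in the model case is $a\mapsto\bigl(\begin{smallmatrix}0&a\\ a^*&0\end{smallmatrix}\bigr)$. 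For each $x\otimes a$ we need that $(x\otimes I_2)\circ(1\otimes a)$ behaves like the tensor product. I would verify this from \eqref{HO3}--\eqref{HO4} together with the polarized Jordan identity.

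\textbf{Step 2: definition of the product.} Let $s := \iota(1) = 1\otimes(e_{12}+e_{21})$, a symmetry exchanging $p$ and $q$. Guided by the model, where $\{X,s,Y\}=\tfrac12(XsY+YsX)$, I would set
\[
\iota(ab) := \{\iota(a),\, s,\, \iota(b)\} + \text{(a correction term built from } \iota(ia) \text{ and } \iota(ib)\text{)},
\]
so that in the model both the symmetric and antisymmetric parts of $ab$ are recovered. An equivalent, more convenient formulation uses Peirce projections: $ab$ is the element whose real part is $2U_p\bigl(\iota(a)\circ\iota(b^*)\bigr)$ and whose imaginary part is obtained by replacing $b$ by $ib$. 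I would fix one such formula and check three things.
\begin{itemize}
\item It is $\C$-bilinear.
\item On $J$ it gives $x\circ y$ after symmetrization, so the Jordan product of $A$ restricted to $J$ is the original one. This uses \eqref{HO1}.
\item The induced Jordan product on $M_2(A)_{sa}$ agrees with the given $\circ$ on $M$. By bilinearity this reduces to generators $x\otimes a$, where it follows from \eqref{HO1}--\eqref{HO3}.
\end{itemize}
Uniqueness follows because an associative product with these properties is forced to equal the model expression in terms of $\circ$ on $M$.

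\textbf{Step 3: associativity (main obstacle).} Proving $(ab)c=a(bc)$ is where the real work lies. I would translate both sides into iterated triple products in $M_{1/2}(p)$ and use the Peirce multiplication rules $\{M_{1/2},M_{1/2},M_{1/2}\}\subset M_{1/2}$ and $\{M_{1/2},M_1,M_{1/2}\}\subset M_0$, together with the Jordan triple identity
\[
\{a,b,\{c,d,e\}\}=\{\{a,b,c\},d,e\}-\{c,\{b,a,d\},e\}+\{c,d,\{a,b,e\}\}.
\]
Specializing the middle entries to $s$ should yield associativity. Hypothesis \eqref{HO4} is essential here to move $J$-coefficients past the matrix units. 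If this direct approach becomes unwieldy, the fallback is to work first in the special (JC) case via the Shirshov--Cohn theorem for the subalgebra generated by two elements, and then reduce the general case to it.

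\textbf{Step 4: involution and norm.} With $(x\otimes\lambda)^*=x\otimes\bar\lambda$, one checks $(ab)^*=b^*a^*$ from the symmetry of the triple product in its outer variables. Then $A_{sa}=J\otimes1=J$, and the induced Jordan product on $A_{sa}$ is that of $J$, which is a JB-algebra. \Cref{norm} in the complex case, i.e.\ the Alfsen--Shultz theorem, then makes $A$ a C*-algebra for the norm $\lVert x\rVert=\lVert x^*x\rVert_J^{1/2}$, which extends $\lVert\cdot\rVert$.
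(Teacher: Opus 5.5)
Your proposal has a genuine gap at Step 3, and that step is the whole substance of the theorem. The paper does not reprove existence and uniqueness of the associative product. It takes them from Hanche-Olsen's Theorem 4 and proves only the last sentence, i.e.\ $(xy)^*=y^*x^*$ and the C*-norm via \Cref{norm}; your Step 4 reproduces this part. (There, the reason for $(xy)^*=yx$ is that $2(x\otimes e_{11})\circ(y\otimes\sigma_1)=xy\otimes e_{12}+yx\otimes e_{21}$, with $\sigma_1:=e_{12}+e_{21}$, is a self-adjoint element of $M$. The outer symmetry of the triple product only gives commutativity of $a\circ b$.)

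Your Step 3 offers nothing beyond ``should yield associativity'', and the mechanism you name cannot work. In the model, $\{\iota(a),\iota(b),\iota(c)\}=\iota\bigl(\tfrac12(ab^*c+cb^*a)\bigr)$, in particular $\{\iota(a),s,\iota(b)\}=\iota(a\circ b)$. These formulas are unchanged if $A$ is replaced by $A^{\mathrm{op}}$. So whatever you extract from the Jordan triple identity inside $M_{1/2}(p)$ with middle entries $s$ concerns only the Jordan product of $A$. It says nothing about the antisymmetric part $\tfrac12(ab-ba)$ of the product you defined through $U_p$, and hence cannot give $(ab)c=a(bc)$.

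The same problem sits in the verification of Step 2: the hypotheses do not determine $(x\otimes a)\circ(y\otimes b)$ for general generators. If $A$ is a noncommutative C*-algebra with $A_{sa}=J$, then $M_2(A)_{sa}$ and $M_2(A^{\mathrm{op}})_{sa}$ induce two different Jordan products on $(J\otimes_\R M_2(\C))_{sa}$. Both satisfy \eqref{HO1}--\eqref{HO4}, and they are $(x\circ y)\otimes(a\circ b)\pm\tfrac14[x,y]\otimes[a,b]$. So ``by bilinearity this reduces to generators, where it follows from \eqref{HO1}--\eqref{HO3}'' is false. Mixed products such as $(x\otimes\sigma_1)\circ(y\otimes\sigma_2)$, with $\sigma_2:=-ie_{12}+ie_{21}$, encode exactly the commutator of the product you want to construct. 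You must show that your definition reproduces them and is associative, using \eqref{HO4} and the Jordan identity of $M$ in an essential way. Indeed, the conclusion makes $J$ the self-adjoint part of a C*-algebra, so the hypotheses must fail for, e.g., a spin factor $V_n$ with $n\ge 4$ or the Albert algebra. Peirce rules and the triple identity hold in every Jordan algebra, so they cannot achieve this alone.

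The Shirshov--Cohn fallback does not help either. Associativity involves three elements of $J$ together with matrix units, which is beyond two generators. Moreover, even for special $J$ such as a spin factor there is no a priori associative product on $J+iJ$ compatible with $\circ$.

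As it stands you have uniqueness and the final C*-step, but not existence. Either cite Hanche-Olsen as the paper does, or actually prove associativity of the product read off from $2(x\otimes e_{11})\circ(y\otimes\sigma_1)$.
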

\begin{proof}
Except for the last sentence, this is shown in \cite[Thm. 4]{HaOlsTP}. The proof of the remaining part is not given in detail in \cite[Rmk. 11]{HaOlsTP}, so will be discussed here. Let us show that the conjugation on $A = J \otimes_\R \C$ is indeed an anti-homomorphism. For $x,y \in J$ we have
\begin{equation}
2\begin{pmatrix}
x & 0\\
0 & 0
\end{pmatrix}
\circ
\begin{pmatrix}
0 & y\\
y & 0
\end{pmatrix}
=
\begin{pmatrix}
0 & xy\\
yx & 0
\end{pmatrix}.
\end{equation}
Since the rightmost matrix is self-adjoint, it follows that $(xy)^*=yx=y^*x^*$. Since $A = J + iJ$, we obtain $(xy)^*=y^*x^*$ for all $x,y \in A$. The $C^*$-norm on $J$ is constructed in \Cref{norm}.
\end{proof}

We now state a coordinatization theorem due to Jacobson, and then show that for a JB-algebra the coordinate algebra is in fact a real C*-algebra. 

\begin{theorem}[Jacobson]\label{JacCood}
Let $J$ be a unital Jordan algebra over a field $\K$ of characteristic $\char \K \neq 2$. Let $n\ge 4$ and let $i\colon M_n(\K)_{sa} \to J$ be a unital Jordan homomorphism. Then there exists an associative $*$-algebra $D$ over $\K$ and a Jordan isomorphism $\Phi \colon M_n(D)_{sa} \to J$ extending $i$.
\end{theorem}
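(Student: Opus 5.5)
The plan is to reconstruct $D$ from the Peirce decomposition of $J$ relative to the frame of matrix units supplied by $i$. This is the classical route of Jacobson's strong coordinatization theorem.

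\textbf{Frame and Peirce decomposition.} Put $e_k := i(E_{kk})$ for $1\le k\le n$ and $u_{kl} := i(E_{kl}+E_{lk})$ for $k\neq l$. Since $i$ is a unital Jordan homomorphism, the $e_k$ are pairwise orthogonal idempotents with $\sum_k e_k = 1$, and the $u_{kl}$ satisfy the relations of a complete system of symmetric matrix units:
$$u_{kl}^2 = e_k+e_l,\qquad 2u_{kl}\circ u_{lm}=u_{km} \quad (k,l,m \text{ distinct}).$$
Because $\char\K\neq 2$, the Peirce decomposition gives
$$J=\bigoplus_{k\le l}J_{kl},\qquad J_{kk}=U_{e_k}J,\qquad J_{kl}=\{x: e_k\circ x=e_l\circ x=\tfrac12 x\}\ (k\ne l).$$
It also gives the standard multiplication rules, for instance $J_{kl}\circ J_{lm}\subset J_{km}$ for distinct indices, and $J_{kl}\circ J_{mp}=0$ when the index sets are disjoint. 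The maps $U_{s}$, where $s=i(\text{permutation matrix})$, are involutive Jordan automorphisms of $J$ permuting the Peirce spaces as the permutation dictates. Through these maps every off-diagonal space $J_{kl}$ is canonically identified with $J_{12}$.

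\textbf{Definition of $D$.} Set $D:=J_{12}$ as a $\K$-vector space. For $a,b\in D$, transport $b$ to $J_{23}$ by the canonical identification, and define $ab\in D$ as the transport of $2\,a\circ b'\in J_{13}$ back to $J_{12}$. Define the involution by $a^*:=U_{s_{12}}(a)$, where $s_{12}=i(E_{12}+E_{21}+\sum_{k\ge3}E_{kk})$. The unit of $D$ is $u_{12}$.

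Next identify the diagonal Peirce spaces with self-adjoint elements. Use the map
$$J_{11}\ni h\mapsto 2h\circ u_{12}\in J_{12}.$$
I expect this to be injective with image $D_{sa}$; this step needs $n\ge3$, since one uses an auxiliary index to show that symmetric elements of $J_{12}$ come from $J_{11}$. The candidate isomorphism is then
$$\Phi\Bigl(\sum d_{kl}E_{kl}\Bigr):=\sum_k h_k+\sum_{k<l}(\text{transport of } d_{kl}\text{ to }J_{kl}),$$
where $h_k\in J_{kk}$ corresponds to the self-adjoint diagonal entry $d_{kk}$. This $\Phi$ is evidently a linear bijection $M_n(D)_{sa}\to J$ extending $i$. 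That it is a Jordan homomorphism reduces, by bilinearity and the Peirce rules, to checking products of one pair of Peirce components at a time. Each such check is a finite calculation using the linearized Jordan identity together with the definition of the product on $D$.

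\textbf{Main obstacle: associativity of $D$.} With three indices one only gets that $D$ is alternative, and the Albert algebra shows $n=3$ genuinely fails. For $n\ge4$ I would compute $(ab)c$ and $a(bc)$ in $J_{14}$ after transporting $a\in J_{12}$, $b\in J_{23}$, $c\in J_{34}$. One gets, up to factors of 2, $(a\circ b)\circ c$ and $a\circ(b\circ c)$ respectively. Their difference is $[T_a,T_c]b$-type associator terms, and these vanish by the linearized Jordan identity together with the Peirce rule $J_{12}\circ J_{34}=0$.

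The remaining work is bookkeeping: check that the transports are compatible, meaning independent of the chosen permutation (again using a fourth index), and that $*$ is an anti-automorphism. The latter follows as in the proof of \Cref{HO}, since $U_{s_{12}}$ reverses the roles of the indices.
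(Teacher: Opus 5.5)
Your outline is correct and is the classical proof of Jacobson's coordinatization theorem. The paper gives no argument of its own here; it cites Hanche-Olsen--St{\o}rmer (Thm.\ 2.8.9) and McCrimmon (Thm.\ 17.1.1), whose proofs build $D$ on an off-diagonal Peirce space transported by the frame symmetries exactly as you do, and obtain associativity for $n\ge 4$ from $[T_a,T_c]=0$ for $a\in J_{12}$, $c\in J_{34}$ (put $e_1+e_2$ in the middle slot of the linearized Jordan identity). Most of the actual work lies in the Jordan-homomorphism checks you only assert, notably $J_{12}\circ J_{12}\to J_{11}+J_{22}$ and $J_{11}\circ J_{12}\to J_{12}$; these do follow from the linearized Jordan identity and the Peirce rules, but they are not one-line verifications.
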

\begin{proof}
See \cite[Theorem 2.8.9]{HOSt84} or \cite[Thm. 17.1.1]{McC04}.
\end{proof}

\begin{corollary}
Let $J$ be a unital JB-algebra. Let $n\ge 4$ and let $i\colon M_n(\R)_{sa} \to J$ be a unital Jordan homomorphism. Then there exists a real C*-algebra $D$  and a JB-algebra isomorphism $\Phi \colon M_n(D)_{sa} \to J$ extending $i$.
\end{corollary}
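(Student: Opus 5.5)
Apply \Cref{JacCood} with $\K=\R$ to the unital Jordan homomorphism $i\colon M_n(\R)_{sa}\to J$. This gives an associative real $*$-algebra $D$ and a Jordan isomorphism $\Phi\colon M_n(D)_{sa}\to J$ extending $i$. The remaining work is to equip $D$ with a real C*-norm so that $\Phi$ becomes an isomorphism of JB-algebras. Since $\Phi$ is a Jordan isomorphism, transporting the norm of $J$ along $\Phi$ makes $M_n(D)_{sa}$ a JB-algebra. By \Cref{Kap} (or simply because JB-norms are determined by the Jordan structure through the spectral radius), this is the only candidate norm.

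Next I would realize $D_{sa}$ as a JB-algebra. Let $e_{11}\in M_n(\R)_{sa}$ be the first diagonal matrix unit, viewed in $M_n(D)_{sa}$ via the unital inclusion. The Peirce $1$-space $U_{e_{11}}(M_n(D)_{sa})$ is the set of matrices with only a $(1,1)$-entry lying in $D_{sa}$. It is closed, since $U_{e_{11}}$ is a bounded idempotent, and it is a JB-subalgebra. As a Jordan algebra it is isomorphic to $D_{sa}$ with product $\frac12(xy+yx)$, via $x\mapsto x\otimes e_{11}$. Hence $D_{sa}$ carries a complete JB-norm.

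To apply \Cref{norm} in the real case I must check that the involution on $D$ is proper. Suppose $x\in D$ satisfies $x^*x=0$. Consider the self-adjoint matrix $y\in M_n(D)_{sa}$ with $y_{12}=x$, $y_{21}=x^*$ and all other entries zero. Then $y^2$ is diagonal with entries $xx^*$, $x^*x=0$, $0,\dots$. In particular $y^2=xx^*\otimes e_{11}$ lies in the Peirce space, and the same holds for $y^4=(xx^*)^2\otimes e_{11}$. Using $xx^*xx^*=x(x^*x)x^*=0$, we get $y^4=0$. Since a JB-algebra is formally real and has no nonzero nilpotents ($\|y^4\|=\|y\|^4$), this forces $y=0$, so $x=0$. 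Thus \Cref{norm} makes $D$ a real pre-C*-algebra with norm $\|x\|=\|x^*x\|^{1/2}$.

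The main obstacle is completeness of $D$; \Cref{PthEg}(2) shows it is not automatic. I would handle it with the off-diagonal Peirce space. The map $x\mapsto x\otimes e_{12}+x^*\otimes e_{21}$ identifies $D$ linearly with the closed subspace $U_{e_{11}+e_{22}}(M_n(D)_{sa})\cap\{\text{zero diagonal}\}$, which is a Peirce $\tfrac12$-space of $J$ and therefore closed. For the corresponding matrix $y$, the JB-norm satisfies $\|y\|^2=\|y^2\|=\|\mathrm{diag}(xx^*,x^*x)\|=\|x\|^2$. So this is an isometric linear bijection from $(D,\|\cdot\|)$ onto a closed subspace of a Banach space, and $D$ is complete, i.e.\ a real C*-algebra. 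Finally, $M_n(D)$ is then a real C*-algebra whose self-adjoint part has the JC-product of \Cref{JCExp}. The given Jordan structure coincides with it, so by uniqueness of JB-norms (norm equals spectral radius) $\Phi$ is an isometric JB-isomorphism.
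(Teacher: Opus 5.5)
Your argument is correct and has the same skeleton as the paper's: Jacobson's theorem, properness of the involution via the nilpotent element $x\otimes e_{12}+x^*\otimes e_{21}$, then \Cref{norm}. It differs from the paper at two points.

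First, to make $D_{sa}$ a JB-algebra, the paper uses the operator commutant $i(M_n(\R)_{sa})'$, whose preimage under $\Phi$ is $D_{sa}\otimes I_n$. This needs \Cref{OpCmm}, i.e.\ van de Wetering's result that operator commutants are closed under squares. Your Peirce corner $U_{e_{11}}(J)$ is more elementary, since it is visibly a closed subalgebra as the range of a bounded idempotent.

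Second, for completeness the paper passes to the completion $\hat D$. It then uses that the injective Jordan homomorphism $\Phi^{-1}\colon J\to M_n(\hat D)_{sa}$ is automatically isometric \cite[Prop.~1.35]{AS03}, so $M_n(D)_{sa}$ is closed in $M_n(\hat D)_{sa}$. That single step gives both $D=\hat D$ and the isometry of $\Phi$. You instead exhibit $(D,\lVert\cdot\rVert)$ isometrically as the closed off-diagonal Peirce space of $J$. This is intrinsic and avoids the completion, but you then need a separate spectral-radius argument at the end for the isometry of $\Phi$.

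Your identity $\lVert \mathrm{diag}(xx^*,x^*x)\rVert=\lVert x\rVert^2$ silently uses three routine facts that you should state:
\begin{enumerate}
\item elements of the orthogonal corners $U_{e_{11}}(J)$ and $U_{e_{22}}(J)$ add with the maximum norm;
\item the norm induced on $D_{sa}$ by the $(2,2)$-corner agrees with the one from the $(1,1)$-corner, e.g.\ via the isometric automorphism $U_s$ for the symmetry $s=I_n-e_{11}-e_{22}+e_{12}+e_{21}$, or by uniqueness of JB-norms;
\item $\lVert xx^*\rVert=\lVert x^*\rVert^2=\lVert x\rVert^2$ in the pre-C*-algebra $D$.
\end{enumerate}

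Finally, \Cref{Kap} concerns unital order isomorphisms, not uniqueness of the JB-norm for a fixed Jordan product. Your parenthetical spectral-radius reason is the correct justification there.
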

\begin{proof}
After applying \Cref{JacCood} to obtain a unital Jordan isomorphism $\Phi\colon M_n(D)_{sa} \to J$, it remains to construct a C*-algebra norm on $D$ for which $\Phi$ is an isometry. By \Cref{OpCmm} the operator commutant $i(M_n(\R)_{sa})'$ is a JB-subalgebra of $J$. Its preimage under $\Phi$ is $M_n(\R)_{sa}' = D_{sa} \otimes I_n$. It follows that $\lVert d\rVert_{D_{sa}} := \lVert \Phi(d \otimes I_n)\rVert$ defines a JB-norm on $D_{sa}$.
Let $x\in D$; we claim that $x^*x = 0$ implies $x = 0$. Indeed, we have $(x \otimes E_{12} + x^* \otimes E_{21})^2 = xx^* \otimes E_{12} + x^*x \otimes E_{21}$ and the Jordan algebra $M_n(D)_{sa}$ contains no non-zero nilpotent elements, because the Jordan-isomorphic JB-algebra $J$ does not. According to \Cref{norm}, $D$ is a real pre-C*-algebra for a norm $\lVert \cdot \rVert$ extending $\lVert\cdot\rVert_{D_{sa}}$. We shall prove that $D$ is complete. Denote by $\hat{D}$ the real C*-algebra which is the completion of $(D, \lVert \cdot \rVert)$. Then $\Phi^{-1}\colon J \to M_n(\hat{D})_{sa}$ is an injective Jordan homomorphism between JB-algebras, so it is an isometry according to \cite[Prop. 1.35]{AS03}. Consequently, its image $M_n(D)_{sa} = \Phi^{-1}(J)$ is closed in $M_n(\hat{D})_{sa}$, which implies that $D = \hat{D}$. We conclude that $D$ is a real C*-algebra and that $\Phi\colon M_n(D)_{sa} \to J$ is an isometric isomorphism of JB-algebras.
\end{proof}

\section{JB-algebra structures on operator systems}
The main result of this section is \Cref{circdot} which asserts that a JB-algebra structure on the matrix space of an operator system is automatically compatible with the bimodule structure. The proof is based on \Cref{BicpThm} and the following lemma.

\begin{lemma}\label{CprBicp} Let $(E, *, \{ M_n(E)_+\}_n, e)$ be a real or complex operator system and let $n \in \Z_{\ge 1}$. Let $p$ and $q=1-p$ be complementary projections in $M_n(\K)$. Define $P,Q \colon M_n(E)_{sa} \to M_n(E)_{sa}$ by  $P(x) = p \cdot x \cdot p$ and $Q(x) = q \cdot x \cdot q$. Then $P$ and $Q$ are bicomplementary projections on the ordered normed space $M_n(E)_{sa}$.
\end{lemma}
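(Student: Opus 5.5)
The plan is to verify the defining properties one by one, reducing the only nontrivial ones to positivity statements about $2\times 2$ block operators. For that reduction we use a unital complete order embedding $\pi\colon E\to B(H)$ (Choi--Effros in the complex case, Blecher--Russell in the real case). Since $\pi_n$ is an order embedding commuting with the $M_n(\K)$-bimodule action, we may regard $M_n(E)_{sa}$ as a space of self-adjoint operators on $H^{\oplus n}$. In this picture $p$ acts as the orthogonal projection $\tilde p := p\otimes \Id_H$, and $P(x)=\tilde p x\tilde p$.

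First, the routine properties. Positivity of $P$ and $Q$ follows from \eqref{matrOrd} with $a=p=p^*$ (resp.\ $q$). Idempotency is immediate from $p^2=p$. For contractivity, the operator norm of a positive map on an order unit space is $\lVert P(I_n)\rVert = \lVert e\otimes p\rVert_{I_n}\le 1$, since $0\le e\otimes p\le I_n$; here $e\otimes q = I_n - e\otimes p\ge 0$, again by \eqref{matrOrd}. The inclusions $\im_+Q\subset\ker_+P$ and $\im_+ P\subset \ker_+ Q$ hold because $pq=qp=0$. Dually, $P^*Q^* = (QP)^*=0$ gives $\im_+Q^*\subset \ker_+P^*$, and symmetrically.

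Next, the reverse inclusion $\ker_+P\subset\im_+Q$. Let $x\ge 0$ with $pxp=0$. In $B(H^{\oplus n})$ this reads $(x^{1/2}\tilde p)^*(x^{1/2}\tilde p)=0$, so $x^{1/2}\tilde p=0$ and hence $x\tilde p=\tilde p x=0$. Therefore $x = \tilde q x \tilde q = Q(x)\in\im_+Q$, and by symmetry $\ker_+Q\subset \im_+P$.

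The main step is the dual inclusion $\ker_+P^*\subset \im_+Q^*$. Let $\varphi\in (M_n(E)_{sa})^*_+$ with $\varphi\circ P=0$; in particular $\varphi(e\otimes p)=\varphi(P(I_n))=0$. For $x\in M_n(E)_{sa}$ we decompose
\[
x = pxp + qxq + y, \qquad y := p\cdot x\cdot q + q\cdot x\cdot p \in M_n(E)_{sa}.
\]
It then suffices to show $\varphi(y)=0$, for then $\varphi(x)=\varphi(qxq)=(Q^*\varphi)(x)$.

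To this end we scale $x$ so that $\lVert \tilde p x\tilde q\rVert\le 1$. For $t>0$, the operator $t\tilde p + t^{-1}\tilde q \pm y$ is, with respect to $H^{\oplus n}=\tilde pH^{\oplus n}\oplus\tilde qH^{\oplus n}$, the block matrix with diagonal $t, t^{-1}$ and off-diagonal entries $\pm \tilde p x\tilde q$ and its adjoint. This block matrix is positive, by the standard criterion that $\left(\begin{smallmatrix} tI & B\\ B^* & t^{-1}I\end{smallmatrix}\right)\ge 0$ iff $\lVert B\rVert\le 1$. Since $\pi_n$ is an order embedding, this gives $t(e\otimes p)+t^{-1}(e\otimes q)\pm y\ge 0$ in $M_n(E)_{sa}$. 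Applying $\varphi$ yields
\[
\mp\varphi(y)\le t^{-1}\varphi(e\otimes q).
\]
Letting $t\to\infty$ gives $\varphi(y)=0$, as required. By symmetry $\ker_+Q^*\subset\im_+P^*$, so $P,Q$ are bicomplementary.

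The obstacle I anticipate is purely this dual step. One has to make sure that the block-matrix positivity transfers back from $B(H^{\oplus n})$ to the abstract cone $M_n(E)_+$. This works because $\pi_n$ is an order embedding and the element $t\tilde p+t^{-1}\tilde q\pm y$ lies in $\pi_n(M_n(E)_{sa})$.
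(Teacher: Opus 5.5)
Your proof is correct and has the same skeleton as the paper's: the easy inclusions from $pq=qp=0$, a square-root argument for the primal reverse inclusion, and a limiting argument that kills the off-diagonal part $y$ for the dual one. The difference is that you run both nontrivial steps inside $B(H^{\oplus n})$ via the embedding $\pi$, whereas the paper avoids the representation theorem.

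For the primal step, the paper applies an arbitrary state $\phi$ (states being completely positive) and takes square roots in $M_n(\K)$ to get $\phi_n(x)=\phi_n(p\cdot x\cdot p)$. It then concludes by separation of points.

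For the dual step, the paper argues intrinsically. $\lVert y\rVert\le 1$ gives $I_n+y\ge 0$ by definition of the order unit norm, and conjugating by $\tfrac{1}{\sqrt{t}}p+\sqrt{t}\,q$ via \eqref{matrOrd} gives $\tfrac1t p+y+tq\ge 0$. This is exactly the computation behind your block-matrix criterion, so here the two proofs agree up to packaging, with the paper's version using only the operator system axioms.

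What your route buys is uniformity in the primal step. For real $E$, states are $\R$-valued and self-adjoint, hence vanish on skew-adjoint elements, so they do not separate the points of $M_n(E)_{sa}$. For instance, for $E=\C$ regarded as a real C*-algebra, $\phi_2$ vanishes on $\left(\begin{smallmatrix}0&i\\-i&0\end{smallmatrix}\right)$ for every real state $\phi$. So the paper's separation step, read literally, needs $M_k(\R)$-valued unital completely positive maps or the embedding $\pi$ in the real case, and the latter is exactly what your argument uses.
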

\begin{proof}
Clearly $P$ and $Q$ are idempotent linear maps. The matrix properties of the order \eqref{matrOrd} and norm \eqref{mtrNorm} on $M_n(E)$ yield that $P$ and $Q$ are positive and contractive. To prove that $P$ and $Q$ are complementary projections, by symmetry it suffices to show that $\im_+ P = \ker_+ Q$.
Since $(Q \circ P)(x) = q \cdot (p \cdot x \cdot p) \cdot q = (qp) \cdot x \cdot (pq) = 0 \cdot x \cdot 0 = 0$ for $x \in M_n(E)_{sa}$, one has $Q\circ P = 0$, hence $\im_+ P \subset \ker_+ Q$. To show the converse, let $x \in \ker_+ Q$, i.e.\ $x\in M_n(E)_+$ and $q\cdot x\cdot q = 0$. Let $\phi\colon E \to \K$ be any state. Since states are completely positive, the induced map $\phi_n\colon M_n(E) \to M_n(\K)_n$ is positive, hence $\phi_n(x) \in M_n(\K)_+$ and $q\phi_n(x)q = 0$. Let $a \in M_n(\K)_{sa}$ with $a^2 = \phi_n(x)$. Then $(aq)^*aq=qa^2q=0$, hence $aq=0$ and $\phi_n(x)q=a^2q=0$. Similarly, $q\phi_n(x) = 0$. This implies that $\phi_n(x) = p\phi_n(x)p =\phi_n(p \cdot x \cdot p)$. Since the states on $E$ separate points, $x = p \cdot x \cdot p \in \im_+ P$, as desired.\\
\ind It remains to show that $P^*$ and $Q^*$ are also complementary. Again by symmetry we need only prove that $\im_+ P^* = \ker_+ Q^*$. Since $P \circ Q = 0$, we have $Q^* \circ P^* = 0$ and $\im_+ P^* \subset \ker_+ Q^*$. Now let $\phi \in \ker_+ Q^*$, so $\phi(M_n(E)_+) \subset \R_+$ and $\phi(q \cdot M_n(E)_{sa} \cdot q) = 0$.\\
\ind Let $y \in p \cdot M_n(E) \cdot q \oplus q \cdot M_n(E) \cdot p$ be self-adjoint. We will prove that $\phi(y) = 0$. By scaling, we may assume that $\lVert y\rVert \le 1$. For every $n \ge 1$, we claim that $\frac{1}{n}p + y + nq$ is positive. Indeed, since $\lVert y\rVert \le 1$ we have $p + y + q = I_n + y \ge 0$, hence $$\tfrac{1}{n}p + y + nq = (\tfrac{1}{\sqrt{n}} p + \sqrt{n}q) \cdot (p + y + q) \cdot (\tfrac{1}{\sqrt{n}}p + \sqrt{n} q) \ge 0.$$ Since $\phi(q) = 0$, we obtain $\frac{1}{n}\phi(p) + \phi(y) = \phi(\frac{1}{n}p + y + q)\ge 0$ for every $n\ge 1$, hence $\phi(y) \ge 0$. Applying the same reasoning to $-y$ gives $\phi(y) \le 0$, hence $\phi(y) = 0$.\\
\ind Now let $x \in M_n(E)_{sa}$ be arbitrary. By the previous paragraph, we find that
\begin{equation*}
\phi(x) = \phi(p \cdot x \cdot p) + \phi(p \cdot x \cdot q + q \cdot x \cdot p) + \phi(q \cdot x \cdot q) = \phi(p \cdot x \cdot p) = \phi(P(x)),    
\end{equation*} so $\phi \in \im P^*$ as desired. This shows that $\ker_+ Q^* \subset \im_+ P$, hence $\im_+ P^* = \ker_+ Q^*$. We conclude that $P$ and $Q$ are bicomplementary projections on $M_n(E)_{sa}$.
\end{proof}

\begin{theorem}[Automatic compatibility of JB-algebra and operator system structures]\label{circdot}
Let $(E, *, \{M_n(E)_+\}_n, e)$ be a real or complex operator system and let $n \in \Z_{\ge 1}$. Assume that the order unit space $(M_n(E)_{sa}, M_n(E)_+, I_n)$ is given a compatible structure of unital JB-algebra. Then:\\
(1) For all $a,b \in M_n(\K)_{sa}$ and $x \in M_n(E)_{sa}$ we have
\begin{equation}\label{Uap}
U_a(x) = a \cdot x \cdot a,
\end{equation}
and
\begin{equation}\label{Uabp}
\{a,x,b\} = \tfrac{1}{2}(a \cdot x \cdot b + b \cdot x \cdot a).
\end{equation}
Here, the left hand side is given by the quadratic resp.\ triple product in the JB-algebra $M_n(E)_{sa}$, while the right hand side is given by the $M_n(\K)$-bimodule structure on $M_n(E)$.\\
(2) The inclusion map $i\colon M_n(\K)_{sa} \to M_n(E)_{sa}$ is a unital Jordan homomorphism.\\
(3) The operator commutant of $M_n(\K)_{sa}$ is the JB-subalgebra $E_{sa} \otimes I_n := \{d \otimes I_n : d \in E_{sa}\}$, i.e.
\begin{equation}
M_n(\K)_{sa}' = E_{sa} \otimes I_n.
\end{equation}
\end{theorem}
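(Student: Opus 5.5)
The plan is to first establish \eqref{Uap} for projections, then extend by linearity and the spectral theorem, and finally deduce (2) and (3).

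\textbf{Projections.} Let $p\in M_n(\K)$ be a projection and $q=I_n-p$. By \Cref{CprBicp}, $P(x)=p\cdot x\cdot p$ and $Q(x)=q\cdot x\cdot q$ are bicomplementary projections on the ordered normed space $M_n(E)_{sa}$. The order unit norm agrees with the JB-norm of the compatible structure, so they are also bicomplementary on the JB-algebra. \Cref{BicpThm} then says that $P(I_n)=p$ and $Q(I_n)=q$ are complementary projections in the JB-algebra, and that $P=U_p$ and $Q=U_q$. Hence \eqref{Uap} holds for every projection $a=p$. Since $p\circ p=p$ in the JB-algebra, we also get $p\circ q=0$.

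\textbf{Orthogonal projections.} Next I want \eqref{Uabp} for orthogonal projections $p_1,p_2$ with $p_1p_2=0$. The projection $r=p_1+p_2$ satisfies $U_r=U_{p_1}+U_{p_2}+2\{p_1,\cdot,p_2\}$ in the Jordan algebra. In the bimodule we have
\[
r\cdot x\cdot r=p_1\cdot x\cdot p_1+p_2\cdot x\cdot p_2+p_1\cdot x\cdot p_2+p_2\cdot x\cdot p_1 .
\]
Subtracting, using the projection case for $r$, $p_1$ and $p_2$, gives $\{p_1,x,p_2\}=\tfrac12(p_1\cdot x\cdot p_2+p_2\cdot x\cdot p_1)$.

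\textbf{General $a$, and then $b$.} For self-adjoint $a$ write its spectral decomposition $a=\sum\lambda_i p_i$ with orthogonal projections $p_i$ summing to $I_n$. Here I need that $a$, computed in the JB-algebra, equals the same linear combination. This is automatic because the embedding $M_n(\K)\to M_n(E)$ is linear. Expanding $U_a=\sum_i\lambda_i^2U_{p_i}+2\sum_{i<j}\lambda_i\lambda_j\{p_i,\cdot,p_j\}$, which is just bilinearity of the triple product, and comparing with the bimodule expansion gives \eqref{Uap} for all $a$. Then polarization, $\{a,x,b\}=\tfrac12(U_{a+b}-U_a-U_b)(x)$, gives \eqref{Uabp}.

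\textbf{Part (2).} Set $x=I_n$ in \eqref{Uabp}. Since $\{a,I_n,b\}=a\circ b$ in a unital Jordan algebra, we get $a\circ b=\tfrac12(ab+ba)$. So the inclusion is a Jordan homomorphism, and it is unital because $I_n$ is the identity. More generally $\{a,I_n,x\}=a\circ x$, so $a\circ x=\tfrac12(a\cdot x+x\cdot a)$ for all $x$.

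\textbf{Part (3).} From the formula $a\circ x=\tfrac12(a\cdot x+x\cdot a)$ just obtained, $T_a=\tfrac12(L_a+R_a)$, where $L_a$ and $R_a$ are the bimodule left and right multiplications. Since left and right actions commute and $L$, $R$ are (anti)homomorphisms,
\[
[T_a,T_b]x=\tfrac14\,[[a,b],x]_{\text{bimod}},
\]
exactly as in \eqref{TaTb}.

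For $x=d\otimes I_n$ this vanishes, so $E_{sa}\otimes I_n\subset M_n(\K)_{sa}'$.

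Conversely, suppose $x$ operator commutes with all $a$. Then $[c,x]=0$ for every $c$ of the form $[a,b]$ with $a,b$ self-adjoint, and such commutators span the skew-hermitian traceless matrices. In the complex case, multiplying by $i$ gives all traceless matrices; adding $I_n$ then gives all of $M_n(\C)$. In the real case the $[a,b]$ span the real skew-symmetric matrices, so $x$ commutes with these, including the $E_{ij}-E_{ji}$. This is the step I expect to be the main obstacle, because the real case needs care:
\begin{itemize}
\item For $n\ge3$, commuting with all $E_{ij}-E_{ji}$ forces $x$ to be scalar-diagonal with an equal entry $d$. One can then check $d=d^*$ from self-adjointness, giving $x=d\otimes I_n$.
\item For $n=2$ over $\R$, commuting with the single rotation generator only forces $x=\begin{pmatrix}d&-f\\ f&d\end{pmatrix}$. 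Self-adjointness then gives $f^*=-f$, which need not vanish. So I expect to have to use more than commutation of $T$'s. My first attempt would be the operator-commutation condition on the full triple product, e.g.\ $[T_a,T_x]$ evaluated at other elements. If that fails, I would conclude the statement needs the hypothesis $n\ne2$ in the real case, consistent with $m\ge4$ later.
\end{itemize}
For $n=1$ the claim is trivial.
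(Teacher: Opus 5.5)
Your parts (1) and (2) follow the paper's proof: the compressions are bicomplementary, Alfsen--Shultz identifies them with $U_p$ for projections $p$, and the general case follows by spectral decomposition and polarization, then setting $x=I_n$. One small slip: the triple-product formula in (1) has $x$ in the \emph{middle} slot. So to obtain $a\circ x=\tfrac12(a\cdot x+x\cdot a)$ you should use $\{a,x,I_n\}=a\circ x$, not $\{a,I_n,x\}$. Both identities hold, so the formula you derive is correct.

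The genuine gap is in part (3).

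\textbf{What goes wrong.} Operator commutation of $x$ with $a$ means $[T_a,T_x]=0$ as operators on all of $M_n(E)_{sa}$. This involves the Jordan products $x\circ y$ for arbitrary $y\in M_n(E)_{sa}$, about which (1) tells you nothing. Your identity $[T_a,T_b]x=\tfrac14[[a,b],x]$ with $a,b\in M_n(\K)_{sa}$ only evaluates the inner derivation $[T_a,T_b]$ at $x$.

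Operator commutation of $x$ with $a$ and with $b$ does imply $[T_a,T_b]x=0$, since $T_aT_xb=T_xT_ab$ and $T_bT_xa=T_xT_ba$. That is why your converse direction starts correctly. The implication does not reverse, however.

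Two consequences follow:
\begin{itemize}
\item The step ``for $x=d\otimes I_n$ this vanishes, so $E_{sa}\otimes I_n\subset M_n(\K)_{sa}'$'' is a non sequitur.
\item In the converse you work with a strictly weaker condition than operator commutation. This is exactly why the real case $n=2$ seemed to break. Your candidate $x=\begin{pmatrix} d & -f\\ f & d\end{pmatrix}$ with $f^*=-f\neq 0$ satisfies $[[a,b],x]=0$, but it does not operator commute with $E_{11}$.
\end{itemize}
The statement holds for every $n$ over both $\R$ and $\C$; no hypothesis $n\neq 2$ is needed.

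\textbf{The missing ingredient.} You need a Jordan-theoretic criterion that converts operator commutation with elements of $M_n(\K)_{sa}$ into a condition computable from (1).

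The paper uses the Alfsen--Shultz fact that $x$ operator commutes with a symmetry $s$ if and only if $U_s(x)=x$. Since symmetries span $M_n(\K)_{sa}$ and $T_a$ is linear in $a$, this gives $M_n(\K)_{sa}'=\{x: s\cdot x\cdot s=x \text{ for all symmetries } s\}$. Testing with $s=I_n-2E_{ii}$ kills the off-diagonal entries. Testing with the transposition symmetries $s=I_n-E_{ii}-E_{jj}+E_{ij}+E_{ji}$ equalizes the diagonal entries. Together these give both inclusions at once, uniformly in $n$ and $\K$.

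Equivalently, you could use that $x$ operator commutes with a projection $p$ if and only if its Peirce $\tfrac12$-component relative to $p$ vanishes. With $T_p=\tfrac12(L_p+R_p)$, that component is $p\cdot x\cdot (I_n-p)+(I_n-p)\cdot x\cdot p$. In your $n=2$ example, $E_{11}\cdot x\cdot E_{22}=-f\otimes E_{12}$, which forces $f=0$.

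The JB-subalgebra assertion then follows from the lemma that operator commutants are JB-subalgebras.
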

\begin{proof}
(1)
Note that the members of \eqref{Uap} define quadratic maps $Q_i\colon M_n(\K)_{sa} \to \End_\R(M_n(E)_{sa})$, $i=1,2$, given by $Q_1(a) = U_a$ and $Q_2(a)(x) = a \cdot x \cdot a$, whose associated bilinear maps are given by \eqref{Uabp}. It therefore suffices to prove that $Q_1 = Q_2$.\\
\ind Let $p$ be a projection in $M_n(\K)_{sa}$, and let $q = 1-p$. As in \Cref{CprBicp}, the maps $P$ and $Q$ defined by $P(x) = p\cdot x\cdot p$ and $Q(x) = q \cdot x \cdot q$ are bicomplementary projections on $M_n(E)_{sa}$. \Cref{BicpThm} yields that $p = P(I_n)$ and $q = Q(I_n)$ are complementary projections in the JB-algebra $M_n(E)_{sa}$ such that $P = U_p$ and $Q = U_q$. We conclude that $Q_1(p) = U_p = P Q_2(p)$.\\
\ind Now let $a \in M_n(\K)_{sa}$ be arbitrary. By the spectral theorem we can write $a = \sum_{i=1}^n \lambda_i p_i$ where the $p_i$ are pairwise orthogonal projections and $\lambda_i \in \R$. For $k \in \{1,2\}$, polarization yields
$$Q_k(a) = \sum_{i=1}^n \lambda_i^2 Q_k(p_i) + \sum_{1\le i<j\le n} \lambda_i\lambda_j (Q_k(p_i + p_j) - Q_k(p_i) - Q_k(p_j)).$$
Since $p_i$, $p_j$ and $p_i + p_j$ are projections, it follows that $Q_1(a) = Q_2(a)$, as desired.

\ind (2) Set $x=I_n$ in \eqref{Uabp} to find $a \circ b = \{a, I_n, b\} = \frac{1}{2}(a \cdot I_n \cdot b + b \cdot I_n \cdot a) = \frac{1}{2}(ab + ba)$. This shows that the inclusion map is a Jordan homomorphism. It is unital by assumption.\\
\ind (3) Let $a \in M_n(E)_{sa}$ and let $S = \{s\in M_n(\K)_{sa}: s^2 = 1\}$ be the set of symmetries in $M_n(\K)_{sa}$. Since the JB-algebra $M_n(\K)_{sa}$ is generated by $S$, by \Cref{OpCmm} we have $M_n(\K)'_{sa} = S'$. If $s\in S$, then $a$ and $s$ operator commute if and only if $U_s(a) = a$ by \cite[equation (2.26)]{AS03}. Using \eqref{Uabp} we find
$$M_n(\K)'_{sa} = S' = \{a\in M_n(E)_{sa}: s \cdot a \cdot s = a\}.$$
For $d\in E_{sa}$ and $s\in S$ we have $s \cdot (d \otimes I_n) \cdot s = d \otimes s^2 = d \otimes I_n$, hence $E_{sa} \otimes I_n \subset S'$. The reverse inclusion follows upon considering symmetries having the form $s = 1-2E_{ii}$ and $s = 1 - E_{ii} - E_{jj} + E_{ij} + E_{ji}$, where $E_{ij}$ are the matrix units of $M_n(\K)$. By \Cref{OpCmm} we conclude that $E_{sa} \otimes I_n = M_n(\K)'_{sa}$ is a JB-subalgebra of $M_n(E)_{sa}$.
\end{proof}

\section{Proof of main results}
This sections contains the demonstrations of the theorems stated in the introduction. First, we will prove \Cref{MAINTHM2} in the following more precise form. Recall that a $*$-\emph{isomorphism} between $*$-vector spaces is a self-adjoint linear isomorphism.

\begin{theorem}\label{MAINJB} Let $(E, *, \{M_n(E)_+\}_n, e)$ be a (real or complex) complete unital operator system. Let $S$ be the set of integers $n \ge 1$ for which the order unit space $(M_n(E)_{sa}, M_n(E)_+^{\circ}, I_n)$ admits a compatible JB-algebra structure (necessarily unique by \Cref{Kap}).\\
\ind (1) If $1\le m < n$ and $n \in S$ then $m \in S$.\\
Suppose $S$ contains an integer $n$ with $n\ge 2$ if $E$ is complex and $n\ge 4$ if $E$ is real.\\
\ind (2) Then there exists a unital C*-algebra $A$ and a unital *-isomorphism $\phi\colon A \to E$ such that $\phi_m \colon M_m(A)_{sa} \to M_m(E)_{sa}$ is a JB-algebra isomorphism for all $m \in S$.
\end{theorem}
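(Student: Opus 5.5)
For part (1), I will realize $M_m(E)_{sa}$ as a corner of $M_n(E)_{sa}$. Let $p = I_m \oplus 0_{n-m} \in M_n(\K)$. By \Cref{circdot}(1), $U_p(x) = p\cdot x\cdot p$ in the JB-algebra $M_n(E)_{sa}$, so $U_p(M_n(E)_{sa})$ is the set of matrices supported in the upper-left $m\times m$ block. It is identified with $M_m(E)_{sa}$ via $x\mapsto p\cdot x\cdot p$ restricted to the block. A Peirce $1$-space $U_p(J)$ of a JB-algebra is a JB-subalgebra with unit $p$, and its cone of squares is $U_p(J_+) = J_+\cap U_p(J)$. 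By \eqref{matrOrd}, the identification carries $M_m(E)_+$ onto $J_+\cap U_p(J)$: positivity is preserved under compression by $p$ and under dilation by the isometry $v\colon \K^m\to\K^n$. It carries $I_m$ to $p$. Transporting the JB-structure therefore gives a compatible JB-structure on $M_m(E)_{sa}$, so $m\in S$. Completeness is inherited because the order unit norm on the corner agrees with that of $M_m(E)_{sa}$.

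For part (2), fix $n\in S$ as in the hypothesis and set $J=M_n(E)_{sa}$. By \Cref{circdot}(2), the inclusion $M_n(\K)_{sa}\to J$ is a unital Jordan homomorphism.

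\textbf{Real case ($n\ge4$).} The Corollary to \Cref{JacCood} gives a real C*-algebra $D$ and a JB-isomorphism $\Phi\colon M_n(D)_{sa}\to J$ extending the inclusion. By \Cref{circdot}(3), $\Phi$ maps $D_{sa}\otimes I_n$, the commutant of $M_n(\R)_{sa}$, onto $E_{sa}\otimes I_n$. This defines a real-linear bijection $D_{sa}\to E_{sa}$. I will extend it to all of $D$ by using the off-diagonal entries. Since $\Phi$ preserves triple products with matrix units and \Cref{circdot}(1) expresses these through the bimodule action, $\Phi$ is ``entrywise'': it maps $d\otimes E_{12}+d^*\otimes E_{21}$ to $\psi(d)\otimes E_{12}+\psi(d)^*\otimes E_{21}$ for a real-linear bijection $\psi\colon D\to E$ that is self-adjoint and extends the map on $D_{sa}$. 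Concretely, $\psi(d)$ is the $(1,2)$ entry of $\Phi(d\otimes E_{12}+d^*\otimes E_{21})$, and one checks using $U_{E_{11}+E_{22}}$ and Peirce decompositions relative to the $E_{ii}$ that this exhausts $E$. Set $A=D$ and $\phi=\psi$. Then $\phi_n=\Phi$ on $M_n(D)_{sa}$, because both are determined by their Peirce components, which match.

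\textbf{Complex case ($n\ge2$).} By part (1), $2\in S$. Apply \Cref{HO} to $J_0=E_{sa}$, which is a JB-algebra by \Cref{circdot}(3) applied to $M_2(E)_{sa}$. The identification $(J_0\otimes M_2(\C))_{sa}=M_2(E)_{sa}$ holds via $E=E_{sa}+iE_{sa}$. Conditions \eqref{HO1}–\eqref{HO4} follow from \Cref{circdot}: \eqref{HO2} is part (2), \eqref{HO3} follows from \eqref{Uabp}, and \eqref{HO4} and \eqref{HO1} come from part (3) together with the commutant being a subalgebra. This gives $A=E$ as a C*-algebra with $A_{sa}=E_{sa}$, and $\phi=\Id$.

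It remains in both cases to show that $\phi_m$ is a JB-isomorphism for every $m\in S$. Both $M_m(A)_{sa}$ and $M_m(E)_{sa}$ are JB-algebras, so by \Cref{Kap} it suffices to show that $\phi_m$ is a unital order isomorphism. I would derive this from the Jordan structure: on the $E$ side, \Cref{circdot} applied in $M_m(E)_{sa}$ shows that the Jordan product is determined by the bimodule action together with the Jordan product on $E_{sa}\otimes I_m$ and on the off-diagonal Peirce spaces. The latter are pinned down by the structure on $M_2$ corners, whose JB-structure is inherited from any larger $n\in S$ via part (1) and its uniqueness. The main obstacle is exactly this: showing that the Jordan product on $M_m(E)_{sa}$ is uniquely determined by its $2\times2$ corners and the bimodule action, so that it coincides with the one transported from $M_m(A)$. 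I would handle it through Peirce decomposition relative to the diagonal matrix units, using \eqref{Uabp} to express every product of Peirce components via corners.
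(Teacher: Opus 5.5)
The gap is in the final step. Your part (1) matches the paper, and so does your construction of $A$ and $\phi$ at the base level: \Cref{HO} on $M_2(E)_{sa}$ in the complex case, and the corollary to \Cref{JacCood} plus the $(1,2)$ Peirce space in the real case. What is missing is the proof that $\phi_m$ is a Jordan isomorphism for \emph{every} $m\in S$. You flag this as the main obstacle but do not prove it, and the route you sketch fails.

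The product on $M_m(E)_{sa}$ is not pinned down by the diagonally aligned $2\times 2$ corners and the bimodule action. A product $J_{ij}\circ J_{jk}$ with $i,j,k$ distinct lies in no such corner. In the real case this is an obstruction in principle. For skew $k_1,k_2\in A$,
\[
\bigl(k_1\otimes(E_{12}-E_{21})\bigr)\circ\bigl(k_2\otimes(E_{23}-E_{32})\bigr)=\tfrac12\bigl(k_1k_2\otimes E_{13}+k_2k_1\otimes E_{31}\bigr),
\]
which records $k_1k_2$ itself. The Jordan structure of $M_2(A)_{sa}$ only records $k_1k_2+k_2k_1$ and products with self-adjoint elements. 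Concretely, the identity map $\H\to\H^{\mathrm{op}}$ is a Jordan isomorphism at the $2\times 2$ level and trivially respects the $M_m(\R)$-bimodule actions. Yet it is not a Jordan isomorphism at the $3\times 3$ level.

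So in the real case you must use corners of size up to $4\le n$, i.e.\ use again that $\phi_n$ is a Jordan isomorphism. In the complex case only $\phi_2$ is known, so you must split elements into rank-one pieces and use corners that are not aligned with the diagonal.

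The paper closes the gap with a compression argument:
\begin{enumerate}
\item By bilinearity it suffices to treat pieces $c\otimes q$ with $c\in A_{sa}$ and $q$ a rank-one projection in the complex case (using $M_m(A)_{sa}=A_{sa}\otimes_\R M_m(\C)_{sa}$). In the real case the pieces are $c\otimes E_{ij}+c^*\otimes E_{ji}$.
\item Given two pieces, choose an isometry $v\colon\K^k\to\K^m$ whose range projection $p=vv^*$ dominates both supports. Take $k=2$ in the complex case and $k=n$ in the real case.
\item By \Cref{circdot}(1), $p\cdot M_m(E)_{sa}\cdot p=U_p(M_m(E)_{sa})$ is a JB-subalgebra containing both pieces and hence their product. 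The same holds on the $A$ side.
\item The compression $z\mapsto v^*\cdot z\cdot v$ is a unital order isomorphism of this corner onto $M_k(\cdot)_{sa}$, hence a Jordan isomorphism by \Cref{Kap}.
\item This compression intertwines $\phi_m$ with $\phi_k$, which you already know to be a Jordan isomorphism.
\end{enumerate}
This is where \Cref{Kap} does its work, not applied to $\phi_m$ globally.
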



\begin{proof}[Proof~of~part~(1)]
Since $n \in S$, the order unit space $(M_n(E)_{sa}, M_n(E)_+, I_n)$ admits a unique compatible structure of unital JB-algebra. Let $P\colon M_n(\C)_{sa} \to M_m(\C)_{sa}$ be the compression onto the upper left $m \times m$-block. By Theorems \ref{BicpThm} and \ref{CprBicp} we have that $P(M_m(E))_{sa}$ is a JB-algebra with unit element $P(I_m)$. This unital JB-algebra is isomorphic to the order unit space $M_m(E)_{sa},$ which therefore has a Thompson symmetric cone. We conclude that $m \in S$.
\end{proof}

\begin{proof}[Proof~of~part~(2),~complex~case]
We are going to apply \Cref{HO}. By part (a), from $2\le n \in S$ it follows that $2\in S$. Hence, $(M_2(E)_{sa}, M_2(E)_+, I_2)$ admits a (unique) compatible JB-algebra structure. By \Cref{circdot}(3), $E_{sa} \otimes I_2$ is a JB-subalgebra of $M_2(E)_{sa}$. Using the isomorphism $E_{sa} \cong E_{sa} \otimes I_2$, we give $E_{sa}$ a JB-algebra structure such that \eqref{HO1} is satisfied. It follows from \eqref{Uabp} that \eqref{HO2} and \eqref{HO3} are satisfied. Finally, \Cref{circdot}(3) yields that \eqref{HO4} is satisfied. Thus we obtain from \Cref{HO} that there uniquely exists a unital C*-algebra $A$ and a unital \mbox{*-iso}morphism $\phi\colon A \to E$ such that $\phi_m \colon M_m(A)_{sa} \to M_m(E)_{sa}$ is an isomorphism of JB-algebras for $m \in \{1,2\}$.\\
\ind It remains to show that $\phi_m\colon M_m(A)_{sa} \to M_m(E)_{sa}$ is a JB-algebra isomorphism for all $m \in S$. Note that $M_m(A)_{sa} = A_{sa} \otimes_\R M_n(\C)_{sa}$. Let $x,y \in M_n(A)_{sa}$. Since any element of $E_{sa} \otimes_\R M_n(\C)_{sa}$ is a sum of pure tensors, and any element of $M_n(\C)_{sa}$ is a real linear combination of rank-$1$ projections, we have $x = \sum_i c_i \otimes q_i$ and $y = \sum_j d_j \otimes r_j$ for suitable elements $c_i, d_j \in E_{sa}$ and rank-$1$ projections $q_i, r_j \in M_n(\C)_{sa}$. By bilinearity of the Jordan products in $M_n(A)_{sa}$ and in $M_n(E)_{sa}$, it therefore suffices to show for $c,d\in E_{sa}$ and rank-$1$ projections $q,r\in M_n(\R)_{sa}$ that 
\begin{equation}\label{prod}
\phi_m((c \otimes q) \circ (d \otimes r)) = \phi_m(c \otimes q) \circ \phi_m(d \otimes r).
\end{equation}
\ind Choose an isometry $u \in M_{2,n}(\C)$ such that the projection $p:= uu^*$ dominates $q$ and $r$. Denote the compression map attached to $u$ by $C\colon M_m(A) \to M_2(A)$, $C(z) = u \cdot z \cdot u$, and similarly for $E$. Note that $p\cdot M_m(E)_{sa} \cdot p$ is a JB-subalgebra of $M_m(E)_{sa}$ by \Cref{BicpThm} and \Cref{CprBicp}, and likewise $p \cdot M_m(A)_{sa} \cdot p$ is a JB-subalgebra of $M_m(A)_{sa}$. We obtain a commutative diagram
\begin{center}
\begin{tikzcd}
p\cdot M_m(D)_{sa} \cdot p \arrow[d, "C"'] \arrow[rr, "\phi_m"] &  & p\cdot M_m(E)_{sa} \cdot p \arrow[d, "C"] \\
M_2(D)_{sa} \arrow[rr, "\phi_2"]                                &  & M_2(E)_{sa},    
\end{tikzcd}
\end{center}
in which the vertical arrows are surjective unital isometries, hence Jordan isomorphisms. The bottom map is a Jordan isomorphism as well, hence so is the top map by commutativity. Since $c \otimes q, d \otimes r \in p \cdot M_m(A)_{sa} \cdot p$, we conclude that \eqref{prod} is satisfied.
\end{proof}


\begin{proof}[Proof~of~part~(2),~real~case]
Choose $n\in S$ with $n\ge 4$ (by part (a), one may take $n=4$). We apply \Cref{JacCood} to the JB-algebra $J := M_n(E)_{sa}$ and the inclusion map $i\colon M_n(\R)_{sa} \to M_n(E)_{sa}$ of \Cref{circdot}(2), to obtain a real C*-algebra $A$ and a unital JB-isomorphism $\Phi\colon M_n(A)_{sa} \to J$. Put $H := M_n(A)_{sa}$. We will define a unital *-isomorphism $\phi\colon A \to E$ such that $\Phi = \phi_n|_H$.\\
\ind Denote the matrix units of $M_n(\R)$ by $E_{ij}$.
Note that $E_{11}, \ldots, E_{nn}$ is a complete system of orthogonal idempotents in both $H$ and $J$. We denote the corresponding Peirce eigenspaces by $H_{ij}$ and $J_{ij}$. Since $\Phi$ extends $i$, it preserves the Peirce decomposition relative to $\{E_{ii}\}$, i.e.\ $\Phi(H_{ij}) = J_{ij}$. Using \Cref{circdot}(1) we find for $1\le i\neq j\le n$ that
$$J_{ii} = \{y \otimes E_{ii}: y \in E_{sa}\}, \quad J_{ij} = \{y \otimes E_{ij} + y^* \otimes E_{ji}: y \in E\}.$$
Similarly,
$$H_{ii} = \{x \otimes E_{ii}: x \in A_{sa}\}, \quad H_{ij} = \{x \otimes E_{ij} + x^* \otimes E_{ji}: x \in A.\}$$
Since $\Phi$ is the identity on $M_n(\R)_{sa}$, it preserves the Peirce decomposition relative to $\{E_{ii}\}$. In other words, $\Phi$ restricts for all $1\le i,j\le n$ to a bijection $$\Phi|_{H_{ij}}\colon H_{ij} \stackrel{\sim}{\to} J_{ij}.$$
Therefore, there exists a unique bijection $\phi\colon A \stackrel{\sim}{\to} E$ such that for each $x\in A$ it holds that
\begin{equation}\label{phidef}
\Phi(x \otimes E_{12} + x^* \otimes E_{21}) = \phi(x) \otimes E_{12} + \phi(x)^* \otimes E_{21}.
\end{equation}
We are going to check that $\phi$ is a unital *-isomorphism such that $\Phi = \phi_n|_H$. To this end, we shall first record some identities featuring the elements of $M_n(\R)_{sa}$ given for $1\le i\neq j\le n$ by
\begin{equation}
h_{ij} := E_{ij} + E_{ji}, \quad
s_{ij} := I_n - E_{ii} - E_{jj} + h_{ij}.  
\end{equation} Let $\pi$ be a permutation of $\{1,2,\ldots,n\}$ which is a transposition interchanging $k$ and $l$, say. By repeated use of \Cref{circdot}(1), one checks that for all $1\le i\neq j \le n$ we have
\begin{align}
U_{s_{kl}}(y \otimes E_{ii}) &= y \otimes E_{\pi(i)\pi(i)}, \label{Usklii}\\
U_{s_{kl}}(y \otimes E_{ij} + y^* \otimes E_{ji}) &= y \otimes E_{\pi(i)\pi(j)} + y^* \otimes E_{\pi(j)\pi(i)}. \label{Usklij}
\end{align}
Moreover, for $y\in E_{sa}$ we have
\begin{equation}\label{brh}
2h_{12} \circ (y \otimes E_{11}) = y \otimes E_{12} + y \otimes E_{21}.
\end{equation}
The analogous equations hold in $M_n(D)_{sa}$, directly from the definition of its Jordan product.\\
\ind It follows from \eqref{phidef} with $x=1$ and $\Phi(h_{12}) = h_{12}$ that $\phi$ is unital. Moreover, \eqref{Usklij} with $i=k=1$ and $j=l=2$ gives that
$$U_{s_{12}}(x \otimes E_{12} + x^* \otimes E_{21}) = x^* \otimes E_{12} + x \otimes E_{21}$$
and similarly for $y \in A$. 
Since $\Phi \circ U_{s_{12}} = U_{s_{12}} \circ \Phi$, it follows that $\phi$ is $*$-preserving. From \eqref{brh} and \eqref{phidef} it follows that $\Phi(x \otimes E_{11}) = \phi(x) \otimes E_{11}$ for $x\in A_{sa}$. This establishes that $\Phi$ and $\phi_n$ agree on $A_{11} + A_{12}$. Using \eqref{Usklii} and \eqref{Usklij} with $i=k=1$, $j=2$ and $l\ge 2$ it follows that $\Phi$ and $\phi_n$ agree on $A_{ll} + A_{2l}$. A similar application of \eqref{Usklij} with $\pi(2)=k$ shows that $\Phi$ and $\phi_n$ agree on $A_{kl}$ in the remaining case that $k \neq l$ and $l \ge 2$. We conclude that $\Phi = \phi_n|_H$.




\ind To finish the proof of part (b), let us show that $\phi_m\colon M_m(A)_{sa} \to M_m(E)_{sa}$ is a unital order isomorphism whenever $m \in S$. This is clear if $m < n$, so assume $m > n$. Invoking \Cref{MRJB}, we endow $(M_m(E)_{sa}, M_m(E)_+, I_m)$ with a compatible structure of JB-algebra and claim that then $\phi_m$ is a Jordan isomorphism.\\ \ind Let $x,y \in M_m(E)_{sa}$; it is to be shown that $\phi_m(x \circ y) = \phi_m(x) \circ \phi_m(y)$. By bilinearity of the product, we can assume $x = c \otimes E_{ij} + c^* \otimes E_{ji}$ and $y = d \otimes E_{kl} + d^* \otimes E_{lk}$ for $c,d\in A$ and integers $1\le i,j,k,l\le n$. Let $u \in M_{n,m}(\R)_{sa}$ be an isometry such that the projection $p \in M_n(\R)_{sa}$ dominates $E_{ii}$, $E_{jj}$, $E_{kk}$, $E_{ll}$. Since $x,y \in p \cdot M_m(D)_{sa} \cdot p$, the argument given in the last paragraph of the complex case shows that $\phi(x\circ y)=\phi(x) \circ \phi(y)$. We conclude that $\phi_m\colon M_m(A)_{sa} \to M_m(E)_{sa}$ is a JB-algebra isomorphism. The proof of the theorem is complete.
\end{proof}

We are now in the position to prove the main result of this article. Part (2) of the following theorem was stated as \Cref{MAINTHM} in the introduction.

\begin{theorem}\label{MAINTHMGR} Let $(E, *, \{M_n(E)_+\}_n, e)$ be a (real or complex) complete unital operator system. Let $m \in \Z$ be an integer such that $m\ge 2$ if $E$ is complex and $m\ge 4$ if $E$ is real. Then:\\
(1) $E$ is unitally $m$-order isomorphic to a C*-algebra if and only if $M_m(E)_+^{\circ}$ admits a gauge-reversing bijection.\\
(2) $E$ is unitally completely order isomorphic to a C*-algebra if and only if  $M_n(E)^\circ_+$ admits a gauge-reversing bijection for arbitrarily large integers $n\ge 1$.
\end{theorem}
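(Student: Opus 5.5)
The plan is to reduce both parts to \Cref{MAINJB} via \Cref{MRJB}, and to handle necessity using \Cref{invgr}.

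\textbf{Necessity.} Suppose $\phi\colon A\to E$ is a unital $m$-order isomorphism from a unital C*-algebra $A$. Then $\phi_m\colon M_m(A)_{sa}\to M_m(E)_{sa}$ is a unital order isomorphism. Since $M_m(A)$ is again a C*-algebra, $M_m(A)_{sa}$ is a unital JB-algebra whose cone of squares is $M_m(A)_+$. By \Cref{invgr}, inversion $i_m$ is a gauge-reversing bijection of $M_m(A)_+^\circ$. The map $\phi_m$ is a unital order isomorphism, so it is an isometry for the order unit norms and carries open cone onto open cone. Hence $\phi_m\circ i_m\circ\phi_m^{-1}$ is an order-anti-isomorphism of $M_m(E)_+^\circ$ that is homogeneous of degree $-1$. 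For (2) the same argument works for every $n$, since a complete order isomorphism is an $n$-order isomorphism for all $n$.

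\textbf{Sufficiency in (1).} Assume $M_m(E)_+^\circ$ admits a gauge-reversing bijection. The operator system $E$ is complete, and $M_m(E)_{sa}$ with the order unit norm is equivalent to a finite direct sum of copies of $E$, so it is complete as well. \Cref{MRJB} then gives a compatible JB-algebra structure on $(M_m(E)_{sa},M_m(E)_+,I_m)$, i.e.\ $m\in S$ in the notation of \Cref{MAINJB}. The bound $m\ge 2$ (complex) or $m\ge4$ (real) is exactly the hypothesis of \Cref{MAINJB}(2). That result produces a unital C*-algebra $A$ and a unital $*$-isomorphism $\phi\colon A\to E$ such that $\phi_k$ is a JB-isomorphism $M_k(A)_{sa}\to M_k(E)_{sa}$ for every $k\in S$, in particular for $k=m$.

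The step I expect to need care is checking that a JB-isomorphism is an order isomorphism for the given cones. This holds because the cone on each side is the cone of squares: on $M_m(A)_{sa}$ because it is a C*-algebra, and on $M_m(E)_{sa}$ by compatibility of the structure. A Jordan isomorphism maps squares onto squares, so $\phi_m(M_m(A)_+)=M_m(E)_+$. Finally, $M_m(E)=M_m(E)_{sa}+iM_m(E)_{sa}$ in the complex case, and positivity is only defined on self-adjoint parts in both cases. Since $\phi$ is self-adjoint, $\phi_m$ is a self-adjoint map whose restriction to self-adjoint parts is an order isomorphism, so $\phi$ is a unital $m$-order isomorphism.

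\textbf{Sufficiency in (2).} If gauge-reversing bijections exist for arbitrarily large $n$, then the same argument via \Cref{MRJB} shows these $n$ lie in $S$. By \Cref{MAINJB}(1) the set $S$ is downward closed, so $S=\Z_{\ge1}$, and it contains an integer $\ge 4$. Applying \Cref{MAINJB}(2) once yields a single pair $(A,\phi)$ for which every $\phi_k$ is a JB-isomorphism. By the argument above each $\phi_k$ is then an order isomorphism, so $\phi$ is a unital complete order isomorphism. The essential point here is that \Cref{MAINJB}(2) gives one $\phi$ working simultaneously for all $k\in S$, so no compatibility between different levels has to be arranged by hand.
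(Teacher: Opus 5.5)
Your proposal is correct and follows the paper's proof. Necessity comes from transporting the inversion map of the JB-algebra $M_n(A)_{sa}$ through the unital order isomorphism. Sufficiency comes from invoking \Cref{MRJB} to obtain compatible JB-structures and then applying \Cref{MAINJB} once, using the single $\phi$ it provides for all levels in $S$.

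You also spell out two points the paper leaves implicit, and both are handled correctly. The first is the completeness of $M_m(E)_{sa}$ in the order unit norm, which \Cref{MRJB} requires. The second is that the resulting JB-isomorphisms $\phi_k$ are order isomorphisms, since on both sides the positive cone is the cone of squares.
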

\begin{proof}
If $A$ is a unital C*-algebra, then for every $n\ge 1$ also $M_n(A)$ is a unital C*-algebra whose inversion map defines a gauge-reversing bijection on $M_n(A)_+^{\circ}$. Now, if $E$ is unitally $m$-order isomorphic to $A$, then for every $n\le m$ the order unit spaces $M_n(E)_{sa}$ and $M_n(A)_{sa}$ 
are isomorphic, and hence there exists a gauge-reversing bijection on $M_n(E)_+^{\circ}$.\\
\ind Conversely, let $S$ be the set of integers $n\ge 1$ for which $M_n(E)_+^{\circ}$ admits a gauge-reversing bijection. The order-theoretic characterization of JB-algebras in \Cref{MRJB} implies that there exists a compatible JB-algebra structure on $(M_n(E), M_n(E)_+, I_n)$ for every $n\in S$. The proof is finished by an application of \Cref{MAINJB}.
\end{proof}

\begin{example} The following example demonstrates that, in contrast to the complex case, the real case of part (1) of \Cref{MAINTHMGR} does not hold if one merely assumes $m\ge 2$.\\
\ind Let $k$ be a positive integer with $k \not\in \{1, 2, 4\}$. There exist pairwise anti-commuting skew-adjoint unitaries $s_1, \ldots, s_{k-1} \in B(H)$ on a complex Hilbert space $H$ of sufficiently large dimension. Let $E \subset B(H)$ be the unital real operator system given by the real linear span of $\Id_H$ and $s_1, \ldots, s_{k-1}$. Then $M_2(E)_{sa}$ is a JB-subalgebra of $B(H)_{sa}$, hence $M_2(E)_+^{\circ}$ admits a gauge-reversing bijection. We claim that $E$ cannot be unitally $2$-order isomorphic to a real C*-algebra $A$. In fact, if $E$ and $A$ were *-isomorphic, then $A_{sa} \cong E_{sa} = \R$. For $y\in A$ we have $yy^* = \lVert y\rVert^2 = y^*y$, from where $A$ is a normed associative division algebra over $\R$. But then by Mazur's theorem \cite{Mazur} this implies that $A$ is isomorphic to $\R$, $\C$ or $\H$, hence $E$ has dimension $1$, $2$ or $4$. Since $\dim E = k \not\in \{1, 2, 4\}$, we arrive at a contradiction.\\
\ind The author does not know of a counterexample with $m=3$.
\end{example}

\Cref{MAINJB} may provide a new perspective on the Choi--Effros theorem \cite[Thm. 3.1]{ChEff77} stating that the range of a unital completely positive projection on a unital complex C*-algebra is unitally completely order isomorphic to a unital complex C*-algebra. The analogous result for real unital C*-algebras was established by Ruan in \cite[Section 4]{Ruan01} for real $B(H)$ and in the general case by Blecher in \cite[p. 5]{Ble24}. Neal--Russo proved in \cite[Cor. 5.6]{NlRs02} that the range of a unital $2$-positive projection on a unital complex C*-algebra is unitally $2$-order isomorphic to a unital complex C*-algebra. All their results may be deduced in one fell swoop from the corresponding result for JB-algebras due to Effros--Størmer (cited as \Cref{EffSt}) by an application of \Cref{MAINJB}. This approach also allows to treat the case of a unital $n$-positive projection on a unital real C*-algebra for $n\ge 4$.

\begin{theorem}\label{UPJB} Let $A$ be a real/complex unital C*-algebra. Let $n$ be an integer such that $n\ge 4$ if $A$ is real and $n\ge 2$ if $A$ is complex. Suppose $P\colon A \to A$ is a unital $n$-positive (resp.\ completely positive) idempotent linear map. Then there exists a unital $n$-order isomorphism (resp.\ complete order isomorphism) from the range $P(A)$ onto a unital real/complex C*-algebra.
\end{theorem}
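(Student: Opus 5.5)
The plan is to reduce the statement to \Cref{MAINJB} by producing, for each relevant matrix level $m\le n$ (resp.\ every $m$ in the completely positive case), a compatible JB-algebra structure on the order unit space $(M_m(P(A))_{sa}, M_m(P(A))_+, I_m)$. The operator system structure on $E := P(A)$ is the one inherited from $A$: $E$ is a self-adjoint unital subspace of $A$ (after replacing $P$ by $\tfrac12(P+P^*\!\circ)$ if necessary; an $n$-positive map is automatically self-adjoint since positive maps on the order unit space $A_{sa}$ extend, so $P(x^*)=P(x)^*$ and $E$ is self-adjoint), with $M_m(E)_+ := M_m(E)\cap M_m(A)_+$ and unit $1_A$. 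Since $P$ is bounded and idempotent, $E$ is closed, so $E$ is a complete unital operator system.

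Fix $m\le n$ (resp.\ any $m\ge1$). The map $P_m\colon M_m(A)\to M_m(A)$ is idempotent, unital and positive, and it maps $M_m(A)_{sa}$ onto $M_m(E)_{sa}$ because $P$ is self-adjoint. Apply \Cref{EffSt} to the unital JB-algebra $J:=M_m(A)_{sa}$ (a JC-algebra by \Cref{JCExp}) and the projection $P_m|_J$: the range $M_m(E)_{sa}$ becomes a unital JB-algebra with product $(r,s)\mapsto P_m(r\circ s)$ and identity $P_m(I_m)=I_m$. The key point to verify is that this JB-structure is \emph{compatible} with the order unit space $(M_m(E)_{sa},M_m(E)_+,I_m)$, i.e.\ that its cone of squares is $M_m(E)\cap M_m(A)_+$ and its norm is the restricted norm. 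I expect this to be the main (though modest) obstacle; I would invoke the content of the Effros--Størmer theorem, namely that $P(J)$ with its Choi--Effros product is a JB-algebra whose order and norm are those inherited from $J$ (if only the norm is asserted, one uses that in a unital JB-algebra the positive cone is determined by the norm and unit: $x\ge0$ iff $\lVert \lVert x\rVert 1 - x\rVert\le\lVert x\rVert$, which holds in both $J$ and $P(J)$ with the same norm and unit). Hence $m\in S$ in the notation of \Cref{MAINJB}.

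Now $S\ni n$ with $n\ge2$ in the complex case and $n\ge4$ in the real case (resp.\ $S=\Z_{\ge1}$), so \Cref{MAINJB}(2) yields a unital C*-algebra $B$ and a unital $*$-isomorphism $\phi\colon B\to E$ such that $\phi_m\colon M_m(B)_{sa}\to M_m(E)_{sa}$ is a JB-algebra isomorphism for all $m\in S$. A JB-algebra isomorphism preserves cones of squares, and on both sides these are the positive cones (for $M_m(B)$ by the C*-structure, for $M_m(E)$ by the compatibility established above). Thus $\phi_m$ is a unital order isomorphism for every $m\le n$ (resp.\ every $m$), so $\phi^{-1}\colon P(A)\to B$ is the desired unital $n$-order isomorphism (resp.\ complete order isomorphism).
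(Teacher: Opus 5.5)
Your proposal is correct and is essentially the paper's proof: you apply \Cref{EffSt} to $P_m$ on $M_m(A)_{sa}$ at each level $m$ where $P$ is positive, and then invoke \Cref{MAINJB}(2). Your norm/unit argument that the Choi--Effros cone of squares coincides with the inherited cone $M_m(P(A))\cap M_m(A)_+$ makes explicit a compatibility step the paper leaves implicit.

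One minor correction concerns self-adjointness of $P$ in the real case. It does not follow from positivity on $A_{sa}$. It is, however, built into the paper's definition of $n$-positivity, and it also follows from unitality together with $2$-positivity. Drop the suggested symmetrization $\tfrac12(P+{*}\circ P\circ{*})$, since it need not be idempotent.
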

\begin{proof}
Let $m\ge 1$ be an integer such that $P$ is $m$-positive. We consider the unital JB-algebra $J := M_m(A)_{sa}$. Then $P_m\colon J \to J$ is a unital positive projection, hence $P_m(J) = M_m(P(A))_{sa}$ is unitally order isomorphic to a unital JB-algebra by \Cref{EffSt}. The conclusion now follows from \Cref{MAINJB}(2). 
\end{proof}

\section{Acknowledgements}
The author would like to thank Mark Roelands for his valuable comments on an earlier version of this article. Furthermore, the author is grateful to David Blecher for pointing out several relevant references.


\bibliography{references}

@Book{AS01,
 Author = {Alfsen, E. M. and Shultz, F. W.},
 Title = {State spaces of operator algebras. {Basic} theory, orientations, and {{\(C^*\)}}-products},
 ISBN = {0-8176-3890-3},
 Year = {2001},
 Publisher = {Birkh{\"a}user, Boston, MA},
 zbMATH = {1614150},
 Zbl = {0983.46047}
}

@article {ElMCRP,
    AUTHOR = {El Amin, K.  and Morales Campoy, A.   and 
              Rodr\'iguez Palacios, A. },
     TITLE = {A holomorphic characterization of {$C^*$}- and
              {$JB^*$}-algebras},
   JOURNAL = {Manuscripta Math.},
  FJOURNAL = {Manuscripta Mathematica},
    VOLUME = {104},
      YEAR = {2001},
    NUMBER = {4},
     PAGES = {467--478},
      ISSN = {0025-2611,1432-1785},
   MRCLASS = {46L70 (46L05)},
  MRNUMBER = {1836108},
MRREVIEWER = {Cho\ Ho\ Chu},
       DOI = {10.1007/s002290170021},
       URL = {https://doi.org/10.1007/s002290170021},
}

@article {Sak56,
    AUTHOR = {Sakai, Sh\^oichir\^o},
     TITLE = {The absolute value of {$W^*$}-algebras of finite type},
   JOURNAL = {Tohoku Math. J. (2)},
  FJOURNAL = {The Tohoku Mathematical Journal. Second Series},
    VOLUME = {8},
      YEAR = {1956},
     PAGES = {70--85},
      ISSN = {0040-8735,2186-585X},
   MRCLASS = {46.1X},
  MRNUMBER = {81453},
MRREVIEWER = {I.\ E.\ Segal},
       DOI = {10.2748/tmj/1178245010},
       URL = {https://doi.org/10.2748/tmj/1178245010},
}

@article {Sher56,
    AUTHOR = {Sherman, S.},
     TITLE = {On {S}egal's postulates for general quantum mechanics},
   JOURNAL = {Ann. of Math. (2)},
  FJOURNAL = {Annals of Mathematics. Second Series},
    VOLUME = {64},
      YEAR = {1956},
     PAGES = {593--601},
      ISSN = {0003-486X},
   MRCLASS = {81.0X},
  MRNUMBER = {82887},
MRREVIEWER = {I.\ E.\ Segal},
       DOI = {10.2307/1969605},
       URL = {https://doi.org/10.2307/1969605},
}

@article {Kad51,
    AUTHOR = {Kadison, Richard V.},
     TITLE = {A representation theory for commutative topological algebra},
   JOURNAL = {Mem. Amer. Math. Soc.},
  FJOURNAL = {Memoirs of the American Mathematical Society},
    VOLUME = {7},
      YEAR = {1951},
     PAGES = {39},
      ISSN = {0065-9266,1947-6221},
   MRCLASS = {46.3X},
  MRNUMBER = {44040},
MRREVIEWER = {L.\ Nachbin},
}

@article{vdWopCm,
title = {Commutativity in {J}ordan operator algebras},
author = {van de Wetering, J.},
fjournal = {Journal of Pure and Applied Algebra},
 journal = {J. Pure Appl. Algebra},
volume = {224},
number = {11},
pages = {106407},
year = {2020},
}

@book{McC04,
 author = {McCrimmon, K.},
 title = {A taste of {Jordan} algebras},
 issn = {0172-5939},
 isbn = {0-387-95447-3},
 year = {2004},
 publisher =  {Springer, New York, NY},
 
 doi = {10.1007/b97489},
 zbMATH = {2043992},
 Zbl = {1044.17001}
}

@article{Connes74,
 author = {Connes, A.},
 title = {Caract{\'e}risation des espaces vectoriels ordonn{\'e}s sous-jacents aux alg{\`e}bres de {von} {Neumann}},
 fjournal = {Annales de l'Institut Fourier},
 journal = {Ann. Inst. Fourier (Grenoble)},
 issn = {0373-0956},
 volume = {24},
 number = {4},
 pages = {121--155},
 year = {1974},
 
 doi = {10.5802/aif.534},
 url = {https://eudml.org/doc/74194},
 zbMATH = {3450927},
 Zbl = {0287.46078}
}

@book{SakCW,
 author = {Sakai, S.},
 title = {{{\(C^ *\)}}-algebras and {{\(W^ *\)}}-algebras},
 fseries = {Ergebnisse der Mathematik und ihrer Grenzgebiete},
 series = {Ergeb. Math. Grenzgeb.},
 volume = {60},
 year = {1971},
 publisher = {Springer-Verlag, Berlin},
 language = {English},
 zbMATH = {3348793},
 Zbl = {0219.46042}
}

@article{UpmDer,
 ISSN = {00255521, 19031807},
 URL = {http://www.jstor.org/stable/24491372},
 author = {Harald Upmeier},
 journal = {Math. Scand.},
 number = {2},
 pages = {251--264},
 publisher = {Aarhus Univ., Aarhus},
 title = {Derivations of {J}ordan algebras*-ALGEBRAS},
 urldate = {2026-04-07},
 volume = {46},
 year = {1980}
}

@Book{AS03,
 Author = {Alfsen, E. M. and Shultz, F. W.},
 Title = {Geometry of state spaces of operator algebras},
 ISBN = {0-8176-4319-2},
 Year = {2003},
 Publisher = {Birkh{\"a}user, Boston, MA},
 zbMATH = {1885140},
 Zbl = {1042.46001}
}

@misc{LRW25,
      title={Infinite dimensional symmetric cones and gauge-reversing maps}, 
      author={B. Lemmens and M. Roelands and M. Wortel},
      year={Preprint, 2025},
      eprint={arXiv:2504.12487},
      archivePrefix={arXiv},
      primaryClass={math.FA},
      url={https://arxiv.org/abs/2504.12487}, 
}

@book{HOSt84,
 author = {Hanche-Olsen, H. and St{\o}rmer, E.},
 title = {Jordan operator algebras},
 fseries = {Monographs and Studies in Mathematics},
 series = {Monogr. Stud. Math.},
 volume = {21},
 year = {1984},
 publisher = {Pitman, Boston, MA},
 zbMATH = {3893796},
 Zbl = {0561.46031}
}

@article {ASchar78,
    AUTHOR = {Alfsen, E. M. and Shultz, F. W.},
     TITLE = {State spaces of {J}ordan algebras},
   JOURNAL = {Acta Math.},
  FJOURNAL = {Acta Mathematica},
    VOLUME = {140},
      YEAR = {1978},
     PAGES = {155--190},
      ISSN = {0001-5962,1871-2509},
   MRCLASS = {17C99 (46K10)},
  MRNUMBER = {472949},
MRREVIEWER = {Gerhard\ Janssen},
       DOI = {10.1007/BF02392307},
       URL = {https://doi.org/10.1007/BF02392307},
}

@incollection {Ros16,
    AUTHOR = {Rosenberg, Jonathan},
     TITLE = {Structure and applications of real {$C^*$}-algebras},
 BOOKTITLE = {Operator algebras and their applications},
    SERIES = {Contemp. Math.},
    VOLUME = {671},
     PAGES = {235--258},
 PUBLISHER = {American Mathematical Society, Providence, RI},
      YEAR = {2016},
      ISBN = {978-1-4704-1948-6},
   MRCLASS = {46L35 (19K35 19L50 19L64)},
  MRNUMBER = {3546688},
       DOI = {10.1090/conm/671/13513},
       URL = {https://doi.org/10.1090/conm/671/13513},
}

@book {Gdl82,
    AUTHOR = {Goodearl, K. R.},
     TITLE = {Notes on real and complex {$C\sp{\ast} $}-algebras},
    SERIES = {Shiva Mathematics Series},
    VOLUME = {5},
 PUBLISHER = {Shiva Publishing Ltd., Nantwich},
      YEAR = {1982},
     PAGES = {iv+211},
      ISBN = {0-906812-16-X},
   MRCLASS = {46Lxx (16-01 16A48 46-01)},
  MRNUMBER = {677280},
MRREVIEWER = {G.\ A.\ Elliott},
}

@book {Schr93,
    AUTHOR = {Schr\"oder, Herbert},
     TITLE = {{$K$}-theory for real {$C^*$}-algebras and applications},
    SERIES = {Pitman Research Notes in Mathematics Series},
    VOLUME = {290},
 PUBLISHER = {Longman Scientific \& Technical, Harlow; copublished in the
              United States with John Wiley \& Sons, Inc., New York},
      YEAR = {1993},
     PAGES = {xiv+162},
      ISBN = {0-582-21929-9},
   MRCLASS = {19K35 (19K56 19L64 46L80 55N15 57R30 58G12)},
  MRNUMBER = {1267059},
MRREVIEWER = {Jonathan\ M.\ Rosenberg},
}

@book{Paucboa,
 author = {Paulsen, Vern},
 title = {Completely bounded maps and operator algebras},
 fseries = {Cambridge Studies in Advanced Mathematics},
 series = {Camb. Stud. Adv. Math.},
 volume = {78},
 isbn = {0-521-81669-6},
 year = {2002},
 publisher = {Cambridge: Cambridge University Press},
 language = {English},
 zbMATH = {1849957},
 Zbl = {1029.47003}
}

@article{BleTep,
 author = {Blecher, David P. and Tepsan, Worawit},
 title = {Real operator algebras and real positive maps},
 fjournal = {Integral Equations and Operator Theory},
 journal = {Integral Equations Operator Theory},
 issn = {0378-620X},
 volume = {93},
 number = {5},
 pages = {33},
 year = {2021},
 language = {English},
}

@article{EffSto79,
 author = {Effros, E.G. and St{\o}rmer, E.},
 title = {Positive projections and {Jordan} structure in operator algebras},
 fjournal = {Mathematica Scandinavica},
 journal = {Math. Scand.},
 issn = {0025-5521},
 volume = {45},
 pages = {127--138},
 year = {1979},
 language = {English},
 doi = {10.7146/math.scand.a-11830},
 url = {https://eudml.org/doc/166668},
 zbMATH = {3712627},
 Zbl = {0455.46059}
}

@article {Ozawa,
    AUTHOR = {Ozawa, Narutaka},
     TITLE = {About the {C}onnes embedding conjecture},
   JOURNAL = {Jpn. J. Math.},
  FJOURNAL = {Japanese Journal of Mathematics},
    VOLUME = {8},
      YEAR = {2013},
    NUMBER = {1},
     PAGES = {147--183},
      ISSN = {0289-2316,1861-3624},
   MRCLASS = {46-02 (16W80 46Lxx)},
  MRNUMBER = {3067294},
       DOI = {10.1007/s11537-013-1280-5},
       URL = {https://doi.org/10.1007/s11537-013-1280-5},
}

@article{BleRus,
title = {Real operator systems},
journal = {J. Funct. Anal.},
volume = {290},
number = {2},
pages = {111-226},
year = {2026},
issn = {0022-1236},
author = {David P. Blecher and Travis B. Russell},
}

@article {ChuGeom,
    AUTHOR = {Chu, Cho-Ho},
     TITLE = {A geometric characterisation of real {C}*-algebras},
   JOURNAL = {Sci. China Math.},
  FJOURNAL = {Science China. Mathematics},
    VOLUME = {66},
      YEAR = {2023},
    NUMBER = {10},
     PAGES = {2277--2292},
      ISSN = {1674-7283,1869-1862},
   MRCLASS = {46L05 (17C65 22E65 58B20)},
  MRNUMBER = {4646971},
MRREVIEWER = {Junping\ Liu},
       DOI = {10.1007/s11425-022-2041-8},
       URL = {https://doi.org/10.1007/s11425-022-2041-8},
}

@incollection {UpmHoloC,
    AUTHOR = {Upmeier, Harald},
     TITLE = {A holomorphic characterization of {C}*-algebras},
 BOOKTITLE = {Functional analysis, holomorphy and approximation theory, {II}
              ({R}io de {J}aneiro, 1981)},
    SERIES = {North-Holland Math. Stud.},
    VOLUME = {86},
     PAGES = {427--467},
 PUBLISHER = {North-Holland, Amsterdam},
      YEAR = {1984},
      ISBN = {0-444-86845-3},
   MRCLASS = {46L05 (17C65 32M10 46K05)},
  MRNUMBER = {771340},
MRREVIEWER = {Autorreferat},
       DOI = {10.1016/S0304-0208(08)70840-5},
       URL = {https://doi.org/10.1016/S0304-0208(08)70840-5},
}

@article {Ble24,
    AUTHOR = {Blecher, David P.},
     TITLE = {Real operator spaces and operator algebras},
   JOURNAL = {Studia Math.},
  FJOURNAL = {Studia Mathematica},
    VOLUME = {275},
      YEAR = {2024},
    NUMBER = {1},
     PAGES = {1--40},
      ISSN = {0039-3223,1730-6337},
   MRCLASS = {46L07 (46H25 46L08 46M10 47L05 47L25 47L30 47L75)},
  MRNUMBER = {4746864},
MRREVIEWER = {Serap\ \"Oztop},
       DOI = {10.4064/sm230329-23-12},
       URL = {https://doi.org/10.4064/sm230329-23-12},
}

@article{Ruan01,
 author = {Ruan, Zhong Jin},
 title = {On real operator spaces},
 fjournal = {Acta Mathematica Sinica. English Series},
 journal = {Acta Math. Sin., Engl. Ser.},
 issn = {1439-8516},
 volume = {19},
 number = {3},
 pages = {485--496},
 year = {2003},
 language = {English},
}

@article{Mazur,
 author = {Mazur, Stanislaw},
 title = {Sur les anneaux lin{\'e}aires},
 journal = {C. R. Math. Acad. Sci. Paris},
 issn = {0001-4036},
 volume = {207},
 pages = {1025--1027},
 year = {1938},
 language = {French},

}

@article{ChEff77,
 author = {Choi, M.-D. and Effros, E.G.},
 title = {Injectivity and operator spaces},
 journal = {J. Funct. Anal.},
 issn = {0022-1236},
 volume = {24},
 pages = {156--209},
 year = {1977},
 language = {English},
 doi = {10.1016/0022-1236(77)90052-0},
 zbMATH = {3532026},
 Zbl = {0341.46049}
}

@article {NlRs02,
    AUTHOR = {Neal, Matthew and Russo, Bernard},
     TITLE = {Operator space characterizations of {$C^*$}-algebras and
              ternary rings},
   JOURNAL = {Pacific J. Math.},
  FJOURNAL = {Pacific Journal of Mathematics},
    VOLUME = {209},
      YEAR = {2003},
    NUMBER = {2},
     PAGES = {339--364},
      ISSN = {0030-8730,1945-5844},
   MRCLASS = {46L05 (46L70 46T05)},
  MRNUMBER = {1978376},
MRREVIEWER = {Richard\ M.\ Timoney},
       DOI = {10.2140/pjm.2003.209.339},
       URL = {https://doi.org/10.2140/pjm.2003.209.339},
}

@inproceedings{HaOlsTP,
    author = {Hanche-Olsen, H.},
    title = {{JB}-algebras with tensor product are {C}*-algebras},
    booktitle = {Operator Algebras and their Connections with Topology and Ergodic Theory},
    year = {1985}
}

@misc{OCJB,
      title={An order-theoretic characterization of {J}{B}-algebras}, 
      author={Mark Roelands and Samuel Tiersma},
      year={Preprint, 2025},
      eprint={arXiv:2507.09526},
      url={https://arxiv.org/abs/2507.09526}, 
}

\end{document}